\numberwithin{equation}{section}
\theoremstyle{plain}
\newtheorem{theorem}{Theorem}[section]
\newtheorem{lemma}[theorem]{Lemma}
\newtheorem{corollary}[theorem]{Corollary}
\newtheorem{proposition}[theorem]{Proposition}
\theoremstyle{definition}
\newtheorem{definition}[theorem]{Definition}
\theoremstyle{remark}
\newtheorem{remark}[theorem]{Remark}
\newtheorem{case[theorem]}{Case}
\def\supp{\hbox{supp\,}}
\def\norm#1.#2.{\lVert#1\rVert_{#2}}
\title[Decay estimates for a class of Dunkl wave equations]{Decay estimates for a class of Dunkl wave equations
}
\author{Cheng Luo}
\author{Shyam Swarup Mondal} 
\author{Manli Song}
\address{\endgraf School of Mathematics and Statistics, Northwestern Polytechnical University, Xi'an, Shaanxi 710129, China}
\email{caashmale@163.com}
\address{ 
\endgraf Stat-Math unit,
Indian Statistical Institute Kolkata, 
BT Road,  Baranagar, Kolkata  700108, India}
\email{mondalshyam055@gmail.com}
\address{\endgraf School of Mathematics and Statistics, Northwestern Polytechnical University, Xi'an, Shaanxi 710129, China}
\email{mlsong@nwpu.edu.cn}
\keywords{Dunkl Laplacian,   Dunkl Bosev space, Wave equation, Decay estimate, Strichartz estimate.}
\subjclass[2010]{Primary 22E25, 33C45, 35H20, 35B40.}
\date{\today}
\begin{document}
	
	\maketitle

	\allowdisplaybreaks

\begin{abstract}
Let $\Delta_\kappa$ be  the Dunkl Laplacian on $\mathbb{R}^n$ and $\phi: \mathbb{R}^+ \to \mathbb{R}$ is a smooth function. The aim of this manuscript is twofold. 	First, we study the decay estimate for a class of dispersive semigroup of the form $e^{it\phi(\sqrt{-\Delta_\kappa})}$. %where $\phi: \mathbb{R}^+ \to \mathbb{R}$ is smooth and $\Delta_\kappa$ is the Dunkl Laplacian on $\mathbb{R}^n$. 
%Inspired by Guo-Peng-Wang \cite{GPW2008}, we overcome the difficulty arising from the non-homogeneousity of $\phi$ by frequency localization. 
We overcome the difficulty arising from the non-homogeneousity of $\phi$ by frequency localization. 
As applications,  in the next part of the paper,   we establish Strichartz estimates for some concrete wave equations associated with the Dunkl Laplacian $\Delta_k,$ which corresponds to  $\phi(r)=r, r^2, r^2+r^4, \sqrt{1+r^2}, \sqrt{1+r^4}$, and $r^\mu,0<\mu\leq 2, \mu\neq 1$. More precisely, we unify and simplify all the known dispersive estimates and
extend to more general cases.
 Finally,  using the decay estimates, we prove the global-in-time existence of small data Sobolev solutions for the nonlinear Klein-Gordon equation and beam equation with the power type nonlinearities.

  %	In this paper, we study the decay estimate for a class of dispersive semigroup given by $e^{it\phi(\sqrt{-\Delta_\kappa})}$, where $\phi: \mathbb{R}^+ \to \mathbb{R}$ is smooth and $\Delta_\kappa$ is the Dunkl Laplacian on $\mathbb{R}^n$. Inspired by Guo-Peng-Wang \cite{GPW2008}, we overcome the difficulty arising from the non-homogeneousity of $\phi$ by frequency localization.  As applications,   we establish Strichartz estimates for some concrete wave equations associated with the Dunkl Laplacian $\Delta_k,$ which corresponds to  $\phi(r)=r, r^2, r^2+r^4, \sqrt{1+r^2}, \sqrt{1+r^4}$, and $r^\mu,0<\mu\leq 2, \mu\neq 1$.   Finally,  using the decay estimates, we prove the global-in-time existence of small data Sobolev solutions for the nonlinear Klein-Gordon equation and beam equation with the power type nonlinearities.
\end{abstract}
	\tableofcontents

	\section{Introduction}
The main aim of this paper is to investigate  decay estimates for a class of dispersive equations of the form	\begin{equation}\label{HSEquation}
	\begin{cases}
	i\partial_tu+\phi(\sqrt{-\Delta_\kappa}) u=f,\\
	u|_{t=0}=u_0,
	\end{cases}
	\end{equation}
where $\Delta_\kappa$ is the Dunkl Laplacian on $\mathbb{R}^n$, $f:\mathbb{R}^n\times \mathbb R\to \mathbb C$, and $\phi: \mathbb{R}^+ \to \mathbb{R}$ is a smooth function that satisfies some growth condition.

Dispersive inequalities for evolution equations, such as the Schr\"odinger and wave equations play a significant role in the study of semilinear and quasilinear problems, which naturally appear in several physical applications. Proving dispersion means estimating the decay in time on the evolution group associated with the free equation.  In recent times, dispersive properties of evolution equations have become a crucial tool in the study of a variety of questions, including local and global existence for nonlinear equations, well-posedness of Cauchy problems
for nonlinear equations in Sobolev spaces of low order, scattering theory, and many others. In many circumstances, the essential step in proving this time decay is based on the application of a stationary phase theorem on an (approximate) representation of the solution.  Dispersion phenomena, when combined with an abstract functional analysis argument, famously known as the $TT^*$-argument (see \cite{KT}), offer a range of estimates involving space-time Lebesgue norms. These inequalities also known as Strichartz estimates, have evolved into a fundamental and amazing tool in the study of nonlinear partial differential equations over the last few decades, see \cite{Bahouri-app} and references therein.

%Over the last few decades, Strichartz estimates become a fundamental and amazing tool in the study of nonlinear partial differential equations. 
 Strichartz estimates also be interpreted as Fourier restriction estimates, which have a vital role in classical harmonic analysis and are closely linked to arithmetic combinatorics. Considerable attention has been devoted by several researchers to deriving Strichartz estimates in different frameworks. For example, in the Euclidean setting,  these estimates have been proved for several dispersive equations, such as the wave equation and Schr\"{o}dinger equation, for instance, see the pioneering works \cite{GV, KT, Str}. The theory of Strichartz estimates has also been extended in the non-Euclidean frameworks. 
For example, see \cite{BGX2000, H2005, FMV1, FV,  LS2014, SZ, Song2016,  BKG, BBG2021, SY2023} for Strichartz estimates on H-type groups and more generality of step $2$ stratified Lie groups, \cite{BDDM2019, DPR2010, FSW, NR2005, R2008, S2013, Mejjaoli2008-1, Mejjaoli2009, Mejjaoli2013, Ben said, Ratna3} for  metric measure spaces, 
\cite{AP2009, AP2014, APV2012} for hyperbolic spaces, \cite{B1993, BGT2004} for compact Riemannian manifolds,  and \cite{ILP2014} for  bounded domains.

One of the essential ingredients in the derivation of Strichartz estimates is the dispersive estimates. Our aim of this paper is to establish a decay estimate for a class of dispersive equations as follows:
	\begin{equation}\label{HSEquation}
	\begin{cases}
	i\partial_tu+\phi(\sqrt{-\Delta_\kappa}) u=f,\\
	u|_{t=0}=u_0,
	\end{cases}
	\end{equation}
where $\Delta_\kappa$ is the Dunkl Laplacian on $\mathbb{R}^n$, $f:\mathbb{R}^n\times \mathbb R\to \mathbb C$, and $\phi: \mathbb{R}^+ \to \mathbb{R}$ is a smooth function satisfying
\\

(C1)~There exists $m_1>0$ such that for any $\alpha \geqslant 2$ and $\alpha \in \mathbb{N}$,
\begin{equation*}
|\phi'(r)| \sim r^{m_1-1}  , \quad |\phi^{(\alpha)}(r)| \lesssim r^{m_1-\alpha},\quad r \geqslant 1.
\end{equation*}

(C2)~There exists $m_2>0$ such that for any $\alpha \geqslant 2$ and $\alpha \in \mathbb{N}$,
\begin{equation*}
|\phi'(r)| \sim r^{m_2-1}  , \quad |\phi^{(\alpha)}(r)| \lesssim r^{m_2-\alpha},\quad 0<r<1.
\end{equation*}

(C3)~There exists $\alpha_1>0$ such that
\begin{equation*}
|\phi''(r)| \sim r^{\alpha_1-2}, \quad r \geqslant1.
\end{equation*}

(C4)~There exists $\alpha_2>0$ such that
\begin{equation*}
|\phi''(r)| \sim r^{\alpha_2-2}, \quad  0<r<1.
\end{equation*}
Here we use a Fourier multiplier notation to  define $\phi(\sqrt{-\Delta_\kappa}) $  as in the following form $$\phi(\sqrt{-\Delta_\kappa})f=\mathcal{F}_\kappa^{-1}\phi(|\xi|) \mathcal{F}_\kappa f, $$
where $\mathcal{F}_\kappa $ denotes Dunkl transform, defined in (\ref{Dunkltransform}), Section \ref{sec2}.  Notice that the conditions (C1) and (C3) represent the homogeneous order of $\phi$ in high frequency, and the conditions (C2) and (C4) reflect the homogeneous order of $\phi$ in low frequency. If $\phi$ satisfies conditions (C1) and (C3), then $\alpha_1\leq m_1$. Similarly, if $\phi$ satisfies (C2) and (C4), then $\alpha_2\geq m_2$.
 % guarantee that $\alpha_1\leq m_1$. In the same way, the homogeneous order of $\phi$ in low frequency is described by the conditions  (C2) and (C4), which also ensuring that  $\alpha_2\geq m_2$.
 
Here, we want to emphasize that several dispersive wave equations associated with Dunkl Laplacian reduced to type (\ref{HSEquation}). 
For instance, the Dunkl Schr\"{o}dinger equation corresponds to $\phi(r)=r^2$, the Dunkl wave equation corresponds to $\phi(r)=r$, the fractional Dunkl  Schr\"{o}dinger equation corresponds to $\phi(r)=r^\mu$, $0<\mu<2,\mu\neq 1$, the fourth-order Dunkl Schr\"{o}dinger equation corresponds to $\phi(r)=r^2+r^4$, the beam equation $\phi(r)=\sqrt{1+r^4}$, and the Klein-Gordon equation corresponds to $\phi(r)=\sqrt{1+r^2}$, among others.  

%In other words, if we consider   $\phi(r)=r$, we are asking if a dispersive estimate for $e^{it\sqrt{\Delta}}$ the wave flow  can be deduced directly from a corresponding estimate for the Schrödinger flow  $e^{it\Delta}$. The possibility of such a reduction is suggested by a suitable subordination formula connecting the two flows

 Dunkl operators are differential-difference operators that generalize the standard partial derivatives. These operators were introduced by Charles Dunkl \cite{dun1991} using a finite reflection group and a parameter function on its root system. Applications of Dunkl operators in mathematics and mathematical physics are numerous. For example, they play an important role in the integration of quantum many-body problems of the Calogero-Moser-Sutherland type and are employed in the study of probabilistic processes. Additionally, these operators have had a lasting impression on the study of special functions in one and multiple variables as well as orthogonal polynomials,  see \cite{Dunkl book}.

For $\kappa=0,$ as the the Dunkl Laplacian $\Delta_\kappa$ reduces to usual Laplacian $\Delta$ on $\mathbb{R}^n$,    the system (\ref{HSEquation}) reduces to 
\begin{equation}\label{Eucledian}
	\begin{cases}
	i\partial_tu+\phi(\sqrt{-\Delta}) u=f,\\
	u|_{t=0}=u_0.
	\end{cases}
	\end{equation}
  In 1977, Strichartz \cite{Str}  derived the priori estimates for the solution to (\ref{Eucledian}) in the space-time norm using the classical Fourier restriction theorem of Stein and Tomas \cite{Stein1984, T1, T2}. However, later, it was improved by Ginibre-Velo \cite{GV} using a standard duality argument along with the dispersive estimate   of the form 
 \begin{align}\label{dispersive estimate}\left\|e^{i t \phi(\sqrt{-\Delta})} u_0\right\|_X \lesssim|t|^{-\theta}\left\|u_0\right\|_{X^*},\end{align}
where $X^*$ is the dual space of $X$. In 2008, Guo-Peng-Wang \cite{GPW2008} used a unified way to study the decay for a class of dispersive semigroup $e^{it\phi(\sqrt{-\Delta})}$ on $\mathbb{R}^n$  for the system (\ref{Eucledian}).  They assumed that $\phi: \mathbb{R}^+ \to \mathbb{R}$ is smooth satisfying (C1)-(C4) and obtained several decay estimates in time for the dispersive semigroup $e^{it\phi(\sqrt{-\Delta})}$ by introducing Littlewood–Paley projector operator, see  \cite[Theorem 1]{GPW2008}.

Motivated by the recent works, in this paper, we aim to establish the decay estimate for a class of dispersive equations associated with Dunkl Laplacian \eqref{HSEquation}.   Note that, if  $\phi$ is a homogeneous function of order $m$, namely, $\phi(\lambda r)=$ $\lambda^m \phi(r)$ for all $ \lambda>0$, the dispersive estimate can be easily obtained by a theorem of Littman and dyadic decomposition. However, things become complicated when the function $\phi$ is not homogeneous because the scaling constants can not be effectively separated from the time. Since the phases we consider are not necessarily homogeneous or weakly dispersive, to overcome the difficulty, we use a Littlewood-Paley decomposition (see \cite{GPW2008}) to more accurately capture  the
difference between the low and the high frequencies in different scales and catch the possible need for extra regularity.

%To overcome the difficulty, we follow \cite{GPW2008} and apply frequency localization by separating $\phi$ between high and low frequencies in different scales.

Let  $S_0$  and $\tilde{\Delta}_j$ be the Littlewood-Paley decomposition related to the Dunkl Laplacian $\Delta_\kappa$ and $N=2\gamma+n$ (see Section \ref{sec2} for detailed definition). Then the main results of this paper are as follows:
\begin{theorem}\label{ResultTime}
	Assume $\phi:\mathbb{R}^+ \to \mathbb{R}$ is smooth and $U_t=e^{it\phi(\sqrt{-\Delta_\kappa})}$.   Then the subsequent outcomes are valid.
 \begin{enumerate}[(A)]
 \item   For $j \geq 0$, assume that $\phi$ satisfies (C1), then
	\begin{equation}\label{res3-2}
	\|U_t \tilde{\Delta}_j u_0\|_{L_\kappa^\infty(\mathbb{R}^n)} \lesssim |t|^{-\theta}2^{j\left(N-m_1\theta\right)}\|u_0\|_{L_\kappa^1(\mathbb{R}^n)},\,0 \leq \theta \leq \frac{N-1}{2}.
	\end{equation}
 In addition, if $\phi$ satisfies (C3), then
	\begin{equation}\label{res3-3}
	\|U_t \tilde{\Delta}_j u_0\|_{L_\kappa^\infty(\mathbb{R}^n)} \lesssim |t|^{-\frac{N-1+\theta}{2}}2^{j\left(N-\frac{m_1(N-1+\theta)}{2}-\frac{\theta(\alpha_1-m_1)}{2}\right)}\|u_0\|_{L_\kappa^1(\mathbb{R}^n)},\,   0 \leq \theta \leq 1.
	\end{equation}
 \item For $j < 0$, suppose $\phi$ satisfies (C2), then
	\begin{equation*}
	\|U_t \tilde{\Delta}_j u_0\|_{L_\kappa^\infty(\mathbb{R}^n)} \lesssim |t|^{-\theta}2^{j\left(N-m_2\theta\right)}\|u_0\|_{L_\kappa^1(\mathbb{R}^n)},\,0 \leq \theta \leq \frac{N-1}{2}.
	\end{equation*}

In addition, if $\phi$ satisfies (C4), then
	\begin{equation*}
	\|U_t \tilde{\Delta}_j u_0\|_{L_\kappa^\infty(\mathbb{R}^n)} \lesssim |t|^{-\frac{N-1+\theta}{2}}2^{j\left(N-\frac{m_2(N-1+\theta)}{2}-\frac{\theta(\alpha_2-m_2)}{2}\right)}\|u_0\|_{L_\kappa^1(\mathbb{R}^n)},\,   0 \leq \theta \leq 1.
	\end{equation*}
\item If $\phi$ satisfies (C2), then
	\begin{equation}\label{res-sum1}
 \|U_t S_0u_0\|_{L_\kappa^\infty(\mathbb{R}^n)} \lesssim (1+|t|)^{-\theta}\|u_0\|_{L_\kappa^1(\mathbb{R}^n)},\,\theta=\min\left(\frac{N}{m_2},\frac{N-1}{2}\right).
	\end{equation}

In addition, if $\phi$ satisfies (C4) and $\alpha_2=m_2$, then
	\begin{equation}\label{res-sum2}
	\|U_t S_0u_0\|_{L_\kappa^\infty(\mathbb{R}^n)} \lesssim (1+|t|)^{-\theta}\|u_0\|_{L_\kappa^1(\mathbb{R}^n)},\,\theta=\min\left(\frac{N}{m_2},\frac{N}{2}\right).
	\end{equation}
	 \end{enumerate}
\end{theorem}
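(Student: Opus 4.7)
The plan is to reduce the theorem to pointwise bounds on the convolution kernel of $U_t \tilde{\Delta}_j$. By the Dunkl convolution theorem there is a kernel $K_{j,t}$ with $U_t \tilde{\Delta}_j u_0 = K_{j,t} *_\kappa u_0$, namely
\[K_{j,t}(x) = c_\kappa\int_{\R^n} e^{it\phi(|\xi|)}\,\varphi(2^{-j}|\xi|)\,E_\kappa(ix,\xi)\,h_\kappa^2(\xi)\,d\xi,\]
where $\varphi$ is the Littlewood--Paley bump underlying $\tilde{\Delta}_j$. Young's inequality for Dunkl convolution gives $\|U_t \tilde{\Delta}_j u_0\|_{L_\kappa^\infty} \lesssim \|K_{j,t}\|_{L_\kappa^\infty}\|u_0\|_{L_\kappa^1}$, so the theorem reduces to an $L^\infty$ bound on $K_{j,t}$. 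I plan to obtain this by the dyadic rescaling $\xi = 2^j\eta$, which localizes the integrand to $|\eta|\sim 1$ and introduces the rescaled phase $t\phi(2^j|\eta|)$; the Jacobian together with the $2\gamma$-homogeneity of the weight $h_\kappa^2$ yields an overall prefactor $2^{jN}$ with $N=2\gamma+n$.

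For part (A) with $j\geq 0$, the size bound $|E_\kappa(ix,\xi)|\leq 1$ gives the trivial estimate $\|K_{j,t}\|_{L_\kappa^\infty}\lesssim 2^{jN}$, covering $\theta=0$. For the dispersive endpoint $\theta=(N-1)/2$ in \eqref{res3-2}, I pass to polar coordinates adapted to $h_\kappa^2$ and apply a Littman-type oscillatory integral lemma on the Dunkl-weighted sphere: condition (C1) yields $|\phi'(2^j r)|\sim 2^{j(m_1-1)}$ on $r\sim 1$, so the effective oscillation parameter is $t\cdot 2^{jm_1}$ and the spherical integral contributes $(t\cdot 2^{jm_1})^{-(N-1)/2}$; combined with $2^{jN}$ this gives \eqref{res3-2} at the endpoint, and real interpolation fills in $0\leq\theta\leq (N-1)/2$. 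For \eqref{res3-3} I couple the Littman bound with a one-dimensional Van der Corput step in the radial variable using (C3), which gives $|\phi''(2^j r)|\sim 2^{j(\alpha_1-2)}$ and hence an additional factor $(t\cdot 2^{j\alpha_1})^{-1/2}$ at $\theta=1$; interpolating with the $\theta=0$ case of \eqref{res3-2} recovers the exponent of $2^j$ in \eqref{res3-3} via the identity $N-m_1N/2-(\alpha_1-m_1)/2 = N-m_1(N-1)/2-\alpha_1/2$.

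Part (B) proceeds by exactly the same template with $j<0$: after $\xi=2^j\eta$ one still lands on $|\eta|\sim 1$, where (C2) and (C4) play the roles of (C1) and (C3), so only the exponents change. Part (C) then follows by dyadic summation. Writing $S_0 u_0 = \sum_{j<0}\tilde{\Delta}_j u_0$ modulo a smooth compactly supported piece, for $|t|\leq 1$ the trivial series $\sum_{j<0}2^{jN}$ converges to a constant, and for $|t|\geq 1$ the dispersive series $\sum_{j<0}|t|^{-\theta}2^{j(N-m_2\theta)}$ converges precisely when $N-m_2\theta>0$, i.e.\ when $\theta<N/m_2$; combined with the cap $\theta\leq (N-1)/2$ from (B) this produces $\min(N/m_2,(N-1)/2)$ and yields \eqref{res-sum1}. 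Under the additional assumption $\alpha_2=m_2$ of \eqref{res-sum2}, the improved range $\theta\leq 1$ from the Van der Corput argument in (B) extends the cap to $\theta\leq N/2$ after the same optimization.

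The main obstacle I anticipate is the careful execution of the Littman and Van der Corput arguments in the Dunkl-weighted setting: one must verify that the decay of the weighted spherical integral is genuinely governed by the effective dimension $N=2\gamma+n$ rather than $n$, and that the first- and second-derivative estimates on $\phi$ transfer cleanly through the dyadic rescaling. Once those oscillatory integral bounds are established, the interpolations and the geometric summations are essentially routine.
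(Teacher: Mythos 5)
Your overall architecture --- reduction to an $L_\kappa^\infty$ bound on the convolution kernel via Young's inequality \eqref{Young}, dyadic rescaling producing the $2^{jN}$ prefactor, oscillatory-integral estimates, interpolation, and dyadic summation for part (C) --- matches the paper's. However, two steps in the proposal are genuine gaps rather than routine details.

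First, the central oscillatory-integral estimate is deferred to ``a Littman-type lemma on the Dunkl-weighted sphere,'' and this is exactly the step that cannot be taken for granted: the Dunkl kernel $K(ix,\xi)$ has no explicit formula, so there is no direct stationary-phase argument against it and no off-the-shelf Littman theorem in the weighted setting. The paper circumvents this entirely by observing that the multiplier is radial, so the kernel is given by the radial inversion formula \eqref{radial-Dunkl} as a one-dimensional integral against $J_{(N-2)/2}(r|x|)/(r|x|)^{(N-2)/2}$; the effective dimension $N=2\gamma+n$ enters through the order of the Bessel function. Writing $s=2^j|x|$, the paper then splits into $s\le 2$ (repeated integration by parts with $D_r=\frac{1}{i2^jt\phi'(2^jr)}\frac{d}{dr}$, using (C1)) and $s>2$ (the representation \eqref{Bessel-Fourier} with $|h^{(\beta)}(r)|\lesssim(1+r)^{-\frac{N-1}{2}-\beta}$, followed by a non-stationary/stationary case analysis of the phases $t\phi(2^jr)\pm rs$ and Van der Corput under (C3)). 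You flag this as ``the main obstacle,'' but the proposal does not supply the mechanism, and the mechanism is the heart of the proof; the weighted-sphere Littman step should be replaced by this radial Bessel reduction.

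Second, your summation in part (C) fails at the endpoint. When $N/m_2\le(N-1)/2$ the theorem asserts decay $|t|^{-N/m_2}$, but the series $\sum_{j<0}|t|^{-\theta}2^{j(N-m_2\theta)}$ with $\theta=N/m_2$ has exponent $N-m_2\theta=0$ and diverges; summing the uniform-in-$x$ bounds of part (B) can only reach $\theta<N/m_2$. The paper's argument at this endpoint is pointwise in $x$: for fixed $x$ and $t$ only $O(1)$ many scales $j_0$ satisfy the stationarity condition and carry the sharp bound $|t|^{-N/m_2}$ (estimate \eqref{sim}), while all other scales obey non-stationary bounds \eqref{gg} with a free exponent $\alpha$, and the two tails are summed geometrically via Lemma \ref{Sum} after splitting at $2^j\lessgtr|t|^{-1/m_2}$. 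This pointwise localization is an essential extra idea missing from the proposal. (A smaller slip: for \eqref{res3-3} the interpolation should be between the Van der Corput bound at $\theta=1$ and the $\theta=\frac{N-1}{2}$ endpoint of \eqref{res3-2} --- which is the $\theta=0$ case of \eqref{res3-3} --- not the $\theta=0$ case of \eqref{res3-2}.)
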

 We prove the above result in Section \ref{sec4}.    The main difficulty in the setup of Dunkl is that we don't have an explicit expression for the Dunkl transform compared to the classical Fourier transform. In this paper, we used Littlewood-Paley decomposition to better understand the regularity is needed at low and high frequencies in order to get a time decay.  We also use the fact that Dunkl transform for radial functions can be expressed in terms of integrals of Bessel functions. In order to estimate these integrals, we rely on Van der Corput's Lemma and the decay properties of the Bessel functions. 
 
 Using the   decay estimates in Theorem \ref{ResultTime},  
 we establish Strichartz estimates for  the Dunkl Schr\"{o}dinger equation (corresponds to $\phi(r)=r^2$), the Dunkl wave equation (corresponds to $\phi(r)=r$), the fractional Dunkl  Schr\"{o}dinger equation (corresponds to $\phi(r)=r^\mu$), $0<\mu<2, \mu\neq 1$, the fourth-order Dunkl Schr\"{o}dinger equation (corresponds to $\phi(r)=r^2+r^4$), the beam equation  (corresponds to $\phi(r)=\sqrt{1+r^4}$), and the Klein-Gordon equation (corresponds to $\phi(r)=\sqrt{1+r^2}$). We also prove the global well-posedness for the nonlinear Klein-Gordon equation and beam equation for small initial data.

 There are many situations where dispersion phenomena fail, for instance in compact manifolds, on some domains, on the Heisenberg group. However, in some cases, Strichartz estimates (in weak forms) or smoothing properties can be established using other approaches that are not based on dispersive inequality techniques. The Fourier restriction theorem and  Kato smoothing effect techniques stand out among them.
% Note that another way to prove Strichartz estimates is by using the Fourier restriction theorem. 
Recently, the authors in \cite{shyam}  proved  Strichartz estimates for the Dunkl Schr\"{o}dinger equation (which corresponds to $\phi(r)=r^2$)  using the   Fourier-Dunkl transform restriction theorem for paraboloid surfaces. See also \cite{Mejjaoli2008-1, Mejjaoli2009}.
 \begin{remark}
	Since the Dunkl Laplacian $\Delta_\kappa$ reduces to the Euclidean Laplacian $\Delta$ for $\kappa=0$, we recover the results proved in Guo-Peng-Wang \cite{GPW2008}.
\end{remark}

Apart from the introduction, this paper is organized as follows: In Section \ref{sec2}, we recall harmonic analysis associated with the Dunkl operators, Dunkl transforms, and their certain properties.  We also recall some useful technical lemmas for proving the decay estimates and Strichartz estimates. In Section \ref{sec4}, we prove our main result of this paper, i.e., Theorem \ref{ResultTime}. We provide   Strichartz
estimates for some concrete wave equations associated with the Dunkl Laplacian $\Delta_\kappa$ in Section \ref{sec5}. In Section \ref{sec6}, we prove global well-posedness for the nonlinear Klein-Gordon equation and beam equation for small initial data.\\

\noindent \textbf{Notations:}
\begin{itemize}
\item  $A\lesssim B$ means that $A\leqslant CB$, and $A\sim B$ stands for $C_1B\leq A\leq C_2B$, where $C$, $C_1$, $C_2$ denote positive universal constants.
\item $\Delta_\kappa$ denotes Dunkl-Laplacian on $\mathbb{R}^n$.
\item $ S_0$  and $\tilde{\Delta}_j$ be the Littlewood-Paley decomposition related to  $\Delta_\kappa$.
\item $J_\nu$ denotes the Bessel
function of order $\nu.$ 
\item   $B^{s,\kappa}_{p,q}$ denotes the   Dunkl Besov space.
\end{itemize}

\section{Preliminaries}\label{sec2}
In this section, we recall some basic definitions and important properties of the Dunkl operators and briefly overview the related harmonic analysis to make the paper self-contained. A complete account of harmonic analysis related to Dunkl operators can be found in \cite{ros,ben,Ratna3, Mejjaoli2009,MS,Thangavelu}.  
	\subsection{Dunkl operators}\label{dunklop} The basic ingredient in the theory of Dunkl operators are root systems and finite reflection groups. We start this section by the definition of root system. 
 
 Let $\langle\cdot, \cdot\rangle$  denotes the standard Euclidean scalar product in $\mathbb{R}^{n}$. For $x \in \mathbb{R}^{n}$, we denote   $|x|$ as $|x|=\langle x, x\rangle^{1 / 2}$.
	For $\alpha \in \mathbb{R}^{n} \backslash\{0\},$ we denote $r_{\alpha}$ as  the reflection with respect to the hyperplane $\langle \alpha\rangle^{\perp}$, orthogonal to $\alpha$ and is defined by
	$$
	r_{\alpha}(x):=x-2 \frac{\langle \alpha, x\rangle}{|\alpha|^{2}} \alpha, \quad x \in \mathbb{R}^{n}.
	$$
	A finite set $\mathcal{R}$ in $\mathbb{R}^{n} \backslash\{0\}$ is  said to be a  root system if the following holds:
	\begin{enumerate}
		\item $ r_{\alpha}(\mathcal{R})=\mathcal{R}$ for all $\alpha \in \mathcal{R}$,
		\item $\mathcal{R} \cap \mathbb{R} \alpha=\{\pm \alpha\}$ for all $\alpha \in \mathcal{R}$.
	\end{enumerate}
	For a given root system $\mathcal{R}$, the subgroup $G \subset O(n, \mathbb{R})$ generated by the reflections $\left\{r_{\alpha} \mid \alpha \in \mathcal{R}\right\}$ is called the finite Coxeter group associated with $\mathcal{R}$.  The dimension of $span \mathcal{R}$ is called the rank of $\mathcal{R}$.   For a   detailed study on the theory of finite reflection groups, we refer to  \cite{hum}.  	Let    $\mathcal{R}^+:=\{\alpha\in\mathcal{R}:\langle\alpha,\beta\rangle>0\}$ for some $\beta\in\mathbb{R}^n\backslash\bigcup_{\alpha\in\mathcal{R}}\langle \alpha\rangle^{\perp}$,  be a  fix  positive root system.

	Some typical examples of such a system are the Weyl groups such as the symmetric group $S_{n}$ for the type $A_{n-1}$ root system and the hyperoctahedral group for the type $B_{n}$ root system. In addition, $H_{3}, H_{4}$ (icosahedral groups) and, $I_{2}(n)$ (symmetry group of the regular $n$-gon) are also the Coxeter groups. %For more details on the theory of Coxeter groups, We refer to \cite{hum}.

	A multiplicity function for $G$ is a function $\kappa: \mathcal{R} \rightarrow \mathbb{C}$ which is constant on $G$-orbits. Setting $\kappa_{\alpha}:=\kappa(\alpha)$ for $\alpha \in \mathcal{R},$  from the definition of $G$-invariant,  we have $\kappa_{g \alpha}=\kappa_{\alpha}$ for all $g \in G$.  We say $\kappa$ is non-negative if $\kappa_{\alpha} \geq 0$ for all $\alpha \in \mathcal{R}$. The $\mathbb{C}$-vector space of non-negative multiplicity functions on $\mathcal{R}$ is denoted by $\mathcal{K}^{+}$. Let us denote  $\gamma$ as $\gamma=\gamma(\kappa):=\sum\limits_{\alpha\in\mathcal{R}^+}\kappa(\alpha)$. For convenience, we denote $N=2\gamma+n$ throughout the paper. 
 
 For $\xi \in \mathbb{C}^{n}$ and $\kappa \in \mathcal{K}^{+},$ Dunkl  in 1989 introduced a family of first-order differential-difference operators $T_{\xi}:= T_{\xi}(\kappa)$  by
	\begin{align}\label{dunkl}
		T_{\xi}(\kappa) f(x):=\partial_{\xi} f(x)+\sum_{\alpha \in \mathcal{R}^{+}} \kappa_{\alpha }\langle \alpha, \xi\rangle \frac{f(x)-f\left(r_{\alpha} x\right)}{\langle \alpha, x\rangle}, \quad f \in C^{1}\left(\mathbb{R}^{n}\right),
	\end{align}
	where $\partial_{\xi}$ denotes the directional derivative corresponding to $\xi$.  The operator $T_\xi$   defined in (\ref{dunkl}), formally known as Dunkl operator and is one of the most important developments in the theory of special functions associated with root systems \cite{dun}. They commute pairwise and are skew-symmetric with respect to the $G$-invariant measure $h_\kappa(x)dx$, where the weight function $$h_\kappa(x):=\prod\limits_{\alpha\in \mathcal{R}^{+}}|\langle \alpha, x\rangle|^{2 \kappa_{\alpha}}$$   is of 
 homogeneous degree $2\gamma$. Thanks to the $G$-invariance of the multiplicity function, the definition of the Dunkl operator is independent of the choice of the positive subsystem $\mathcal{R}^{+}$.  In \cite{dun1991},	it is shown that for any $\kappa\in\mathcal{K}^+$, there is a unique linear isomorphism $V_\kappa$ (Dunkl's intertwining operator) on the space $\mathcal{P}(\mathbb{R}^n)$ of polynomials on $\mathbb{R}^n$ such that
	\begin{enumerate}
		\item $V_\kappa\left(\mathcal{P}_m(\mathbb{R}^n)\right)=\mathcal{P}_m(\mathbb{R}^n)$ for all $m\in\mathbb{N}$,
		\item $V_\kappa|_{\mathcal{P}_0(\mathbb{R}^n)}=id$,
		\item $T_\xi(\kappa)V_\kappa=V_\kappa\partial_\xi$,
	\end{enumerate}
	where  $\mathcal{P}_m(\mathbb{R}^n)$ denotes the space of homogeneous polynomials of degree $m$. % It is known that $V_k$ induces a homeomorphism of $C(\mathbb{R}^n)$ and also that of $C^\infty(\mathbb{R}^n)$ (see \cite{tri}).
	
	For any finite reflection group $G$ and   any $k\in\mathcal{K}^+$, R\"{o}sler in \cite{ros} proved that there exists a unique positive Radon probability measure $\rho_x^\kappa$ on $\mathbb{R}^n$ such that
	\begin{equation}\label{239}	V_\kappa f(x)=\int_{\mathbb{R}^n}f(\xi)d\rho_x^\kappa(\xi).
	\end{equation}
	The measure $\rho_x^k$ depends on $x\in\mathbb{R}^n$ and its support is contained in the ball $B\left(\|x\|\right):=\{\xi\in\mathbb{R}^n: \|\xi\|\leq\|x\|\}$. In view of the Laplace type representation \eqref{239}, Dunkl's intertwining operator $V_\kappa$ can be extended to a larger class of function spaces.
	
	Let $\{\xi_1, \xi_2, \cdots, \xi_n\}$ be an orthonormal basis of $(\mathbb{R}^n , \langle \cdot, \cdot\rangle )$. Then  the Dunkl Laplacian operator $\Delta_\kappa $ is defined as $$\Delta_\kappa=\sum_{j=1}^nT^2_{\xi_j}(\kappa).$$
	The definition of  $\Delta_\kappa$ is independent of the choice of the orthonormal basis of $\mathbb{R}^n.$ In fact,   one can see that the operator $\Delta_\kappa $ also can be expressed as $$\Delta_\kappa f(x)=\Delta f(x)+\sum_{\alpha \in \mathcal{R}^+}\kappa_\alpha \left\{\frac{2 \langle \nabla f(x),  \alpha \rangle}{\langle \alpha, x\rangle } -|\alpha|^2 \frac{f(x)-f(r_\alpha x)}{\langle \alpha, x\rangle^2 } \right\}, \quad f\in C^1(\mathbb{R}^n),$$
	where $\nabla$ and $\Delta$ are    the usual gradiant  and   Laplacian operator on $\mathbb{R}^n$, respectively.  Observe that, for $\kappa \equiv 0$, the Dunkl Laplacian operator $\Delta_\kappa$ reduces to the classical Euclidean Laplacian $\Delta$. 

For every $y\in\mathbb{R}^n$, the system 
\begin{equation*}
	\begin{cases}
	T_\xi u(x,y)=\langle y,\xi\rangle\, u(x,y), \quad \xi\in\mathbb{R}^n,\\
	u(0,y)=1,
	\end{cases}
	\end{equation*}
admits a unique analytic solution on $\mathbb{R}^n$, which we denote as  $K(x,y)$ and is known as the Dunkl kernel in general. The kernel has a unique holomorphic extension to $\mathbb{C}^n\times \mathbb{C}^n$ and is given by
\begin{equation*}
K(x,y)=V_\kappa(e^{\langle \cdot,y\rangle})(x)=\int_{\mathbb{R}^n}e^{\langle \xi,y\rangle}d\rho_x^\kappa(\xi),\quad \forall x,y\in\mathbb{R}^n,
	\end{equation*}
where the positive Radon probability measure     $\rho_x^k$ depends on $x\in\mathbb{R}^n$ and its support is contained in the ball $B\left(\|x\|\right):=\{\xi\in\mathbb{R}^n: \|\xi\|\leq\|x\|\}$. When $\kappa \equiv 0$, the Dunkl kernel $K(x,y)$  reduces to the exponential $e^{\langle x,y\rangle}$. For $x,y\in\mathbb{R}^n$ and  $z,w\in\mathbb{C}^n$, the Dunkl kernel satisfies the following properties:
\begin{enumerate}[(i)]
    \item $K(z,w)=K(w,z)$ and $K(\lambda z,w)=K(z,\lambda w),\quad \lambda\in\mathbb{C}$.
    \item  For any $\nu\in\mathbb{N}^n$, we have
$$|D_z^\nu K(x,z)|\leq |x|^{|\nu|}e^{|\operatorname{Re}(z)||x|},$$
where $D_z^\nu=\frac{\partial^{|\nu|}}{\partial {z_1}^{\nu_1}\cdots {z_n}^{\nu_n}}$ and $|\nu|=\nu_1+\cdots+\nu_n$. In particular, for any $x,y\in\mathbb{R}^n$, we have
$$|K(x,iy)|\leq 1.$$
\item $K(ix,y)=\overline{K(-ix,y)}$ and $K(gx,gy)=K(x,y),\quad g\in G$.
\end{enumerate}

\subsection{The Dunkl transform} 
For $1\leq p<\infty$, let $L_\kappa^p\left(\mathbb{R}^{n}\right)$ be the space of $L^p$-functions on $\mathbb{R}^{n}$ with respect to the weight $h_\kappa(x)$ with the $L_\kappa^p\left(\mathbb{R}^{n}\right)$-norm
\begin{equation*}
\|f\|_{L_\kappa^p\left(\mathbb{R}^{n}\right)}=\left(\int_{\mathbb{R}^{n}}|f(x)|^ph_\kappa(x)dx\right)^\frac{1}{p},
\end{equation*}
and $\|f\|_{L_\kappa^\infty\left(\mathbb{R}^{n}\right)}=ess \sup\limits_{x\in\mathbb{R}^n}|f(x)|$.
 The Dunkl transform   of  $f\in  L_\kappa^1\left(\mathbb{R}^{n}\right)$ is defined  by
 \begin{equation}\label{Dunkltransform}
 \mathcal{F}_\kappa f(y)=\frac{1}{c_\kappa}\int_{\mathbb{R}^{n}} f(x) K(x,-iy)h_\kappa(x)dx,
 \end{equation}
 where $c_\kappa$ is the Mehta-type constant defined by
 \begin{equation*}
 c_\kappa=\int_{\mathbb{R}^{n}} e^{-\frac{|x|^2}{2}}h_\kappa(x)dx.
 \end{equation*}
When $\kappa \equiv 0$, the Dunkl transform coincides with the classical Fourier transform. The Dunkl transform plays the same role as the Fourier transform in classical Fourier analysis and enjoys properties similar to those of the classical Fourier transform. With $N=2\gamma+n$, here we list some basic properties of the Dunkl transform (see \cite{Dunklll, Jeu, Dai-Ye}).
\begin{enumerate}[(i)]
    \item  For all $f\in L_\kappa^1\left(\mathbb{R}^{n}\right)$, we have
\begin{equation*}
\|\mathcal{F}_\kappa f\|_{L_\kappa^\infty\left(\mathbb{R}^{n}\right)}\leq \frac{1}{c_\kappa} \|f\|_{L_\kappa^1\left(\mathbb{R}^{n}\right)}.
\end{equation*}
\item For any function $f$ in the Schwartz space $\mathcal{S}\left(\mathbb{R}^{n}\right)$, we have
\begin{equation*}
\mathcal{F}_\kappa(T_\xi f)(y)=i\langle \xi, y\rangle \mathcal{F}_\kappa f(y),\quad y,\xi\in\mathbb{R}^{n}.
\end{equation*}
In particular, it follows that 
\begin{equation*}
\mathcal{F}_\kappa(\Delta_\kappa f)(y)=-|y|^2 \mathcal{F}_\kappa f(y),\quad y\in\mathbb{R}^{n}.
\end{equation*}
\item  For all $f\in L_\kappa^1\left(\mathbb{R}^{n}\right)$, we have
\begin{equation*}
\mathcal{F}_\kappa (f(\cdot/\lambda))(y)=\lambda^N\mathcal{F}_\kappa f(\lambda y),\quad y\in\mathbb{R}^{n},
\end{equation*}for all $\lambda>0$.
\item The Dunkl transform $\mathcal{F}_\kappa$ is a homeomorphism of the Schwartz space $\mathcal{S}\left(\mathbb{R}^{n}\right)$ and its inverse is given by $\mathcal{F}_\kappa^{-1}g(x)=\mathcal{F}_\kappa g(-x),$ for all $ g\in \mathcal{S}\left(\mathbb{R}^{n}\right)$. In addition, for all $f\in \mathcal{S}\left(\mathbb{R}^{n}\right)$, it satisfies
\begin{equation*}
 \int_{\mathbb{R}^{n}} |f(x)|^2 h_\kappa(x)dx=\int_{\mathbb{R}^{n}} |\mathcal{F}_\kappa f(y)|^2 h_\kappa(y)dy.
 \end{equation*}
In particular, the Dunkl transform extends to an isometric isomorphism on $L_\kappa^2\left(\mathbb{R}^{n}\right)$. The Dunkl transform can be extended to the space of tempered distributions $\mathcal{S}'\left(\mathbb{R}^{n}\right)$ and is also a homeomorphism of $\mathcal{S}'\left(\mathbb{R}^{n}\right)$.
\item For every $f\in L_\kappa^1\left(\mathbb{R}^{n}\right)$ such that $\mathcal{F}_\kappa f\in L_\kappa^1\left(\mathbb{R}^{n}\right)$, we have the inversion formula
\begin{equation*}
f(x)=\frac{1}{c_\kappa}\int_{\mathbb{R}^{n}} \mathcal{F}_\kappa f(y) K(ix,y)h_\kappa(y)dy,\quad  a.e. \;x\in\mathbb{R}^{n}.
\end{equation*}
\item  If $f$ is a radial function in $ L_\kappa^1\left(\mathbb{R}^{n}\right)$ such that $f(x)=\tilde{f}(|x|)$, then
\begin{equation}\label{radial-Dunkl}
\mathcal{F}_\kappa f(y)=\frac{1}{\Gamma(N/2)}\int_0^\infty \tilde{f}(r)\frac{J_\frac{N-2}{2}(r|y|)}{\left(r|y|\right)^\frac{N-2}{2}}r^{N-1} dr,
\end{equation}
where $J_\nu$ denotes the Bessel function of order $\nu>-\frac{1}{2}$.
\end{enumerate}
 
\subsection{Dunkl convolution operator} For  given $x\in\mathbb{R}^n$, the Dunkl translation operator $f\mapsto \tau_x f$ is defined on $\mathcal{S}\left(\mathbb{R}^{n}\right)$ by
\begin{equation*}
\mathcal{F}_\kappa(\tau_x f)(y)=K(x,iy)\mathcal{F}_\kappa f(y),\quad y\in\mathbb{R}^n.
\end{equation*}
At the moment, the explicit formula of the Dunkl translation operator is known only in two cases. One is when $f(x)=\tilde{f}(|x|)$ is a continuous radial function in $L_k^2(\mathbb{R}^n)$, the Dunkl translation operator is represented by (see \cite{ros, Dai-Wang})
\begin{equation*}
\tau_xf(y)=V_\kappa[\tilde{f}\left(\sqrt{|y|^2+|x|^2-2\langle y,\cdot\rangle}\right)](x)=\int_{\mathbb{R}^n}\tilde{f}\left(\sqrt{|y|^2+|x|^2-2\langle y,\xi\rangle}\right)d\rho^\kappa_x(\xi).
\end{equation*}
The Dunkl translation operator can also be extended to the space of tempered distributions $\mathcal{S}'\left(\mathbb{R}^{n}\right)$. It turns out to be rather difficult to extend important results in classical Fourier analysis to the setting of the Dunkl transform in general. One of the difficulties comes from the fact that the Dunkl translation operator is not positive in
general. In fact, even   $L_\kappa^p(\mathbb{R}^n)$-boundedness of $\tau_x$ is not established in general. On the other hand, when $f$ is a radial function in $L_\kappa^p(\mathbb{R}^n)$, $1\leq p\leq \infty$, the following holds (see \cite{GIT2019, Thangavelu-Xu2005})
\begin{equation*}
\|\tau_xf\|_{L_\kappa^p(\mathbb{R}^n)}\leq \|f\|_{L_\kappa^p(\mathbb{R}^n)}.
\end{equation*}
Using the Dunkl translation operator, we define the Dunkl convolution  of functions $f,g\in \mathcal{S}\left(\mathbb{R}^{n}\right)$ by
\begin{equation*}
f*_\kappa g(x)=\int_{\mathbb{R}^n}\tau_xf(-y)g(y)h_\kappa(y)dy, \quad x\in\mathbb{R}^n.
\end{equation*}
The Dunkl convolution satisfies the following properties (see \cite{Thangavelu-Xu2005}).
\begin{enumerate}
    \item  $\mathcal{F}_\kappa(f*_\kappa g)=\mathcal{F}_\kappa (f)\mathcal{F}_\kappa (g)$ and $f*_\kappa g=g*_\kappa f$.
\item Young's inequality: Let $1\leq p,q,r\leq\infty$ such that $1+\frac{1}{r}=\frac{1}{p}+\frac{1}{q}$. If $f\in L^p_\kappa(\mathbb{R}^n)$ and $g$ is a radial function of $L_\kappa^q(\mathbb{R}^n)$, then $f*_\kappa g\in L_\kappa^r(\mathbb{R}^n)$ and we have
\begin{equation}\label{Young}
\|f*_\kappa g\|_{L_\kappa^r(\mathbb{R}^n)}\leq \|f\|_{L_\kappa^p(\mathbb{R}^n)} \|g\|_{L_\kappa^q(\mathbb{R}^n)}.
\end{equation}
\end{enumerate}
\subsection{Dunkl Besov spaces}\label{HBS}
In this subsection, we recall the definition of the Dunkl Besov spaces using Littlewood-Paley decomposition related to the Dunkl Laplacian $\Delta_\kappa$ (see \cite{Kawazoe-Mejjaoli, Mejjaoli2008}). Dunkl-Besov spaces will be a convenient tool to prove many of our results. 

Fix a non-increasing smooth function $R$ on $[0,+\infty)$ such that $R=1$ in $[0,1/2]$ and $R=0$ in $[1,+\infty)$. Let $\psi(r)=R(r/2)-R(r)$ and $0\leq \psi\leq 1$ such that $\supp \psi\subseteq C_0=[1/2,2]$. Moreover, we observe that
	\begin{align*}
R(r)+\sum_{j=0}^\infty \psi(2^{-j}r)&=1, \quad  r\geq0,\\
\underset{j\in \mathbb{Z}}{\sum}\psi(2^{-j}r)&=1, \quad  r>0.
	\end{align*}
For $u\in \mathcal{S}'\left(\mathbb{R}^{n}\right)$, we define the Littlewood-Paley decomposition related to the Dunkl Laplacian by
$S_0u=R(\sqrt{-\Delta_\kappa})u=\mathcal{F}_\kappa^{-1}\left(R(|\cdot|) \mathcal{F}_\kappa u\right)$ and $\tilde{\Delta}_ju=\psi(2^{-j}\sqrt{-\Delta_\kappa})u=\mathcal{F}_\kappa^{-1}\left(\psi(2^{-j}|\cdot|)\mathcal{F}_\kappa u\right),$ for all $  j\in\mathbb{Z}$. Moreover, we  have
\begin{equation*}
u=S_0u+\sum_{j=0}^\infty \tilde{\Delta}_ju\;\in \;\mathcal{S}'\left(\mathbb{R}^{n}\right).
\end{equation*}
Moreover, we also have   
$$S_0u=S_0(\sum_{j\in \mathbb Z}\tilde{\Delta}_j)u= \sum_{j\leq 0 }S_0\tilde{\Delta}_ju.$$
In addition, from Bernstein's inequalities   \cite{Kawazoe-Mejjaoli}, for any $\sigma\in\mathbb{R}$, $j\in\mathbb{Z}$, $1\leq p,q\leq \infty$ and $u\in \mathcal{S}'\left(\mathbb{R}^{n}\right)$, we have  
	\begin{align}\label{LPLP}
	\left\|(-\Delta_\kappa)^{\frac{\sigma}{2}}\tilde{\Delta}_ju\right\|_{L_\kappa^p(\mathbb{R}^{n})} &\lesssim 2^{j\sigma}\|\tilde{\Delta}_ju\|_{L_\kappa^p(\mathbb{R}^{n})},\\
 \left\|\tilde{\Delta}_ju\right\|_{L_\kappa^q(\mathbb{R}^{n})} &\lesssim 2^{jN\left(\frac{1}{p}-\frac{1}{q}\right)}\|\tilde{\Delta}_ju\|_{L_\kappa^p(\mathbb{R}^{n})}.
	\end{align}
For $1\leq p,q \leq \infty$ and $ s \in\mathbb{R}$, the Dunkl Besov space $B^{s,\kappa}_{p,q}$ is defined as the set of tempered distributions $u \in \mathcal{S}'\left(\mathbb{R}^{n}\right)$ such that $\|u\|_{B^{s,\kappa}_{p,q}}<\infty$ with
	\begin{equation*}
\|u\|_{B^{s,\kappa}_{p,q}}=\|S_0u\|_{L_\kappa^p(\mathbb{R}^n)}+\begin{cases}\left(\sum\limits_{j=0}^\infty2^{jsq}\|\tilde{\Delta}_ju\|_{L_\kappa^p(\mathbb{R}^n)}^q\right)^{\frac{1}{q}},\quad 1\leq q<\infty\\
 \sup_{j\geq0} 2^{js}\|\tilde{\Delta}_ju\|_{L_\kappa^p(\mathbb{R}^n)}\quad,\quad q=\infty.
 \end{cases}
	\end{equation*}
Also, for   $1\leq p\leq \infty$ and $s \in\mathbb{R}$, the Dunkl Sobolev space $H_p^{s,\kappa}(\mathbb{R}^n)$ is
	defined as the set of tempered distributions $u \in \mathcal{S}'\left(\mathbb{R}^{n}\right)$ such that $\mathcal{F}^{-1}_\kappa\left((1+|\cdot|^2)^\frac{s}{2}\mathcal{F}_\kappa u\right)\in L_\kappa^p(\mathbb{R}^n)$ with the norm
 \begin{equation*}
	\|u\|_{H_p^{s,\kappa}(\mathbb{R}^n)}=\|\mathcal{F}^{-1}_\kappa\left((1+|\cdot|^2)^\frac{s}{2}\mathcal{F}_\kappa u\right)\|_{L_\kappa^p(\mathbb{R}^n)}.
	\end{equation*}
Throughout the paper, we  denote $H^{s,\kappa}(\mathbb{R}^n)$  for $H_2^{s,\kappa}(\mathbb{R}^n)$.  Moreover, we have the  equivalent property that 
 \begin{equation*}
	u\in H^{s,\kappa}(\mathbb{R}^n)\Leftrightarrow u\in B^{s,\kappa}_{2,2}(\mathbb{R}^n)
.	\end{equation*}
 Now we list some important properties related to the Besov space $B^{s,\kappa}_{p,q}(\mathbb{R}^n)$ in the following proposition.
\begin{proposition} \cite{Kawazoe-Mejjaoli}\label{properties} Let $p,q\in [1,\infty]$ and $s \in\mathbb{R}$. Then 
\begin{enumerate}
    \item The space $B^{s,\kappa}_{p,q}(\mathbb{R}^n)$ is a Banach space with the norm $||\cdot||_{B^{s,\kappa}_{p,q}(\mathbb{R}^n)}$.
\item The definition of $B^{s,k}_{p,q}(\mathbb{R}^n)$ does not depend on the choice of the function $R$ in the Littlewood-Paley decomposition.
\item The dual space of $B^{s,\kappa}_{p,q}(\mathbb{R}^n)$ is $B^{-s,\kappa}_{p',q'}(\mathbb{R}^n)$;
\item For any $\sigma>0$,  we have the continuous inclusion   $B^{s+\sigma,\kappa}_{p,q}(\mathbb{R}^n)\subseteq B^{s,\kappa}_{p,q}(\mathbb{R}^n)$.
\item Let   $u\in \mathcal{S}'\left(\mathbb{R}^{n}\right)$ and $\sigma\in\mathbb{R}$, then $u\in B^{s,\kappa}_{p,q}(\mathbb{R}^n)$ if and only if $(-\Delta_\kappa)^{\sigma/2}u\in B^{s-\sigma,\kappa}_{p,q}(\mathbb{R}^n)$.
\item For any $1\leq p_1\leq p_2\leq\infty$, $1\leq q_1\leq q_2\leq\infty,$ and $s_1\leq s_2 \in\mathbb{R}$ such that $s_1-\frac{N}{p_1}=s_2-\frac{N}{p_2}$, we also have the  continuous inclusion  
$ B^{s_1,\kappa}_{p_1,q_1}(\mathbb{R}^n)\subseteq B^{s_2,\kappa}_{p_2,q_2}(\mathbb{R}^n).
$ 
\item For all $p\in[2, \infty]$, we have the continuous inclusion $B^{0,\kappa}_{p,2}(\mathbb{R}^n)\subseteq L_\kappa^p(\mathbb{R}^n)$.
\item $B^{0,\kappa}_{2,2}(\mathbb{R}^n)=L_\kappa^2(\mathbb{R}^n)$.
\end{enumerate}
\end{proposition}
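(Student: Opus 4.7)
The plan is to verify each of the eight items by mimicking the classical Besov space theory, using as the essential toolkit the Dunkl Bernstein inequalities (2.2), the compact Dunkl-frequency supports of the Littlewood--Paley blocks $S_0 u$ and $\tilde{\Delta}_j u$, the $L_\kappa^2$-Plancherel identity from property (iv) of the Dunkl transform, and, for item (7), the Dunkl square function theorem available in \cite{Kawazoe-Mejjaoli}. The strategy throughout is to reduce each statement to a fact about the block sequence $(S_0 u, \tilde{\Delta}_0 u, \tilde{\Delta}_1 u, \ldots)$ in a weighted $\ell^q(L_\kappa^p)$-space, after which the only Dunkl-specific input is the multiplier boundedness and Bernstein estimates.

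The first batch of items is then routine. For (1), the map $u \mapsto (S_0 u, \tilde{\Delta}_0 u, \tilde{\Delta}_1 u, \ldots)$ embeds $B^{s,\kappa}_{p,q}$ isometrically into a weighted $\ell^q(L_\kappa^p)$-sequence space; given a Cauchy sequence one extracts blockwise $L_\kappa^p$-limits, reassembles them by the Dunkl partition of unity, and uses Bernstein's inequality on spectrally localized pieces to guarantee convergence in $\mathcal{S}'(\mathbb{R}^n)$. Item (2) is the standard almost-orthogonality argument: two admissible cutoffs $R, R'$ produce blocks whose Dunkl-frequency supports intersect over only finitely many dyadic indices, so $L_\kappa^p$-boundedness of the multipliers $\psi(2^{-j}\sqrt{-\Delta_\kappa})$ and $\psi'(2^{-j}\sqrt{-\Delta_\kappa})$ gives equivalence of norms. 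Item (4) is immediate from $2^{js}\leq 2^{j(s+\sigma)}$ for $j\geq 0$. For (5), the first inequality in (2.2), applied in both directions on a fixed dyadic annulus, gives $\|(-\Delta_\kappa)^{\sigma/2}\tilde{\Delta}_j u\|_{L_\kappa^p}\sim 2^{j\sigma}\|\tilde{\Delta}_j u\|_{L_\kappa^p}$, and the low-frequency block is handled by the smooth compactly supported multiplier $(1+|\xi|^2)^{\sigma/2}R(|\xi|)$. Item (6) uses the second Bernstein inequality combined with the scaling identity $s_1-N/p_1=s_2-N/p_2$, after which monotonicity of $\ell^q$-norms in $q$ absorbs $q_1\leq q_2$. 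Item (8) is Plancherel: $\|\tilde{\Delta}_j u\|_{L_\kappa^2}^2=\|\psi(2^{-j}|\cdot|)\mathcal{F}_\kappa u\|_{L_\kappa^2}^2$ and the squared partition $R^2+\sum_{j\geq 0}\psi(2^{-j}\cdot)^2$ is comparable to $1$.

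The two substantive items are (3) and (7). For the duality (3), I define the pairing $\langle u,v\rangle:=\int_{\mathbb{R}^n}u(x)\overline{v(x)}\,h_\kappa(x)\,dx$ on Schwartz functions, expand both $u$ and $v$ by their Littlewood--Paley decompositions, and estimate by Hölder on each block in $L_\kappa^p\times L_\kappa^{p'}$ followed by Hölder in the $j$-index with weights $2^{js}$ and $2^{-js}$; conversely, any bounded functional restricted to spectrally localized blocks is identified via Riesz representation in $L_\kappa^{p'}$ and reassembled into an element of $B^{-s,\kappa}_{p',q'}$. Item (7), the embedding $B^{0,\kappa}_{p,2}\subseteq L_\kappa^p$ for $p\in[2,\infty]$, is where a genuinely nontrivial harmonic-analytic input is required: for $p\in[2,\infty)$ I would invoke the Dunkl square function equivalence $\|u\|_{L_\kappa^p}\sim \|S_0 u\|_{L_\kappa^p}+\bigl\|(\sum_{j\geq 0}|\tilde{\Delta}_j u|^2)^{1/2}\bigr\|_{L_\kappa^p}$ from \cite{Kawazoe-Mejjaoli} and then apply Minkowski in the form $\|(\sum_j|f_j|^2)^{1/2}\|_{L_\kappa^p}\leq (\sum_j\|f_j\|_{L_\kappa^p}^2)^{1/2}$, valid precisely because $p\geq 2$; the $p=\infty$ endpoint is handled separately by $\|u\|_{L_\kappa^\infty}\leq \|S_0 u\|_{L_\kappa^\infty}+\sum_j\|\tilde{\Delta}_j u\|_{L_\kappa^\infty}$ combined with Bernstein to introduce the required $\ell^2$ weights. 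The main obstacle is thus the Dunkl analogue of the Littlewood--Paley square function theorem in (7): since Dunkl translations are not in general positive, the classical Calderón--Zygmund machinery does not transfer verbatim, but this theorem is precisely the content of \cite{Kawazoe-Mejjaoli} and can be used here as a black box.
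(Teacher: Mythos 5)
The first thing to note is that the paper does not prove this proposition at all: it is quoted verbatim from \cite{Kawazoe-Mejjaoli} as a known fact, so there is no in-paper argument to compare yours against. What you have written is therefore a reconstruction of the external reference rather than an alternative to anything in the text. On its merits, your outline for items (1), (2), (4), (5), (6) and (8) is the standard transport of classical Besov theory to the Dunkl setting, and the Dunkl-specific inputs you identify are the right ones: the uniform $L_\kappa^p$-boundedness of the projectors $\psi(2^{-j}\sqrt{-\Delta_\kappa})$ rests on Young's inequality \eqref{Young}, which is available here precisely because the kernels $\mathcal{F}_\kappa^{-1}\bigl(\psi(2^{-j}|\cdot|)\bigr)$ are radial with $j$-uniform $L_\kappa^1$-norms (this is worth saying explicitly, since general Dunkl translations are not known to be $L_\kappa^p$-bounded). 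Item (3) should carry the usual caveat that the clean duality $\bigl(B^{s,\kappa}_{p,q}\bigr)^*=B^{-s,\kappa}_{p',q'}$ requires $p,q<\infty$ for the density and Riesz-representation steps to go through; the proposition's blanket range $p,q\in[1,\infty]$ is looser than what your argument (or the classical one) delivers.

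The one step that genuinely fails is the $p=\infty$ endpoint of item (7). Your bound $\|u\|_{L_\kappa^\infty}\leq\|S_0u\|_{L_\kappa^\infty}+\sum_{j}\|\tilde{\Delta}_ju\|_{L_\kappa^\infty}$ produces an $\ell^1$ sum of block norms, and no application of Bernstein's inequality can dominate an $\ell^1$ sum by the $\ell^2$ quantity $\bigl(\sum_j\|\tilde{\Delta}_ju\|_{L_\kappa^\infty}^2\bigr)^{1/2}$: the inclusion $\ell^1\subseteq\ell^2$ goes in the wrong direction, and Bernstein between $L_\kappa^\infty$ and $L_\kappa^\infty$ yields no decaying factor to compensate. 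Indeed, already in the Euclidean case $\kappa=0$ the embedding $B^{0}_{\infty,2}\hookrightarrow L^\infty$ is false (a lacunary Fourier series with coefficients in $\ell^2\setminus\ell^1$ has finite $B^{0}_{\infty,2}$-norm but is unbounded, by Sidon's theorem), so this endpoint cannot be rescued by a cleverer argument. You should restrict (7) to $2\leq p<\infty$, where your square-function-plus-Minkowski argument is correct; this is in any case the only range in which the paper ever invokes item (7) (in Corollary \ref{WLebesgue} and in the well-posedness proofs the exponent is $\alpha+2<\infty$).
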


\subsection{Technical Lemmas} 
In this subsection, we recall several important properties to prove the main result of this paper.  From the Dunkl transform and Young's inequality, the $L_\kappa^1-L_\kappa^\infty$ decay estimates are reduced to a list of oscillatory integrals related to Bessel functions. In order to estimate the oscillatory integrals, we recall the stationary phase lemmas and the properties of Bessel functions. We first recall the van der Corput lemma.
\begin{lemma}[Van der Corput lemma \cite{S1993}]\label{phase} Let $g\in C^\infty([a,b])$ be real-valued such that $|g''(x)|\geq \delta $ for any $x\in[a,b]$ with $\delta >0$. Then for any function $\psi \in C^\infty([a,b])$, there exists a constant $C$ (does not depend on $\delta, a, b, g$ or $\psi$)  such that
	\begin{equation*}
	\left|\int_a^b e^{{ ig(x)}}\psi(x)\,dx\right|\leq C{ \delta ^{-1/2}}\left(\|\psi\|_\infty+\|\psi'\|_1\right).
	\end{equation*}
 \end{lemma}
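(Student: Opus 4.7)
The plan is to establish Van der Corput's lemma in two stages: first, bound the oscillatory integral for the trivial amplitude $\psi \equiv 1$, and then deduce the general form via one integration by parts using the antiderivative of $e^{ig}$.

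For the $\psi \equiv 1$ case, the main observation is that $|g''(x)| \geq \delta > 0$ together with continuity of $g''$ forces $g''$ to have constant sign on $[a,b]$, so $g'$ is strictly monotone. Let $c \in [a,b]$ be the point at which $|g'|$ attains its minimum: either the unique interior zero of $g'$ if one exists, or whichever endpoint minimises $|g'|$. By the mean value theorem applied to $g'$, one obtains the pointwise bound $|g'(x)| \geq \delta|x-c|$ throughout $[a,b]$. For a parameter $\lambda > 0$ to be chosen, I split
\begin{equation*}
\int_a^b e^{ig(x)}\,dx = \int_{|x-c|<\lambda} e^{ig(x)}\,dx + \int_{|x-c|\geq\lambda} e^{ig(x)}\,dx.
\end{equation*}
The first integral is bounded trivially by $2\lambda$. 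For the second, I use the identity $e^{ig(x)} = (ig'(x))^{-1} \frac{d}{dx}e^{ig(x)}$ and integrate by parts on each component of $\{|x-c|\geq\lambda\}$; the boundary terms contribute at most $C/(\delta\lambda)$ since $|g'|\geq \delta\lambda$ there, while the bulk term $\int |g''/(g')^2|\,dx$ equals, by monotonicity of $g'$, a telescoping difference of $1/g'$ and is again $\leq C/(\delta\lambda)$. Optimising in $\lambda \sim \delta^{-1/2}$ produces an absolute-constant bound $\bigl|\int_a^b e^{ig(x)}\,dx\bigr| \leq C\delta^{-1/2}$.

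The second stage is routine. Set $F(x) = \int_a^x e^{ig(t)}\,dt$. Applying the first stage to the subinterval $[a,x]$ (the hypothesis $|g''| \geq \delta$ is inherited uniformly) yields $|F(x)| \leq C\delta^{-1/2}$ for every $x \in [a,b]$. Then a single integration by parts produces
\begin{equation*}
\int_a^b e^{ig(x)}\psi(x)\,dx = F(b)\psi(b) - \int_a^b F(x)\psi'(x)\,dx,
\end{equation*}
and the triangle inequality gives
\begin{equation*}
\left|\int_a^b e^{ig(x)}\psi(x)\,dx\right| \leq C\delta^{-1/2}\bigl(|\psi(b)| + \|\psi'\|_{L^1([a,b])}\bigr) \leq C\delta^{-1/2}\bigl(\|\psi\|_\infty + \|\psi'\|_1\bigr).
\end{equation*}

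The main obstacle is the first stage, specifically ensuring that the constant is genuinely independent of $a$, $b$, $g$, and $\delta$. The crucial leverage comes from the monotonicity of $g'$, which turns $g''/(g')^2$ into a perfect derivative so that the remainder after integration by parts telescopes rather than accumulating in an uncontrolled way. The potential subtlety that $g'$ may or may not vanish inside $[a,b]$ is absorbed uniformly by choosing $c$ to minimise $|g'|$ and invoking the pointwise estimate $|g'(x)| \geq \delta|x-c|$; no separate case analysis is then required, and the final optimisation in $\lambda$ is elementary.
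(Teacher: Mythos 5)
Your proof is correct. Note that the paper does not prove this lemma at all --- it is quoted as a known result with a citation to Stein's book \cite{S1993} --- so there is no internal proof to compare against; your argument (constant sign of $g''$ forces monotonicity of $g'$, the lower bound $|g'(x)|\geq\delta|x-c|$ from the minimising point $c$, the split at scale $\lambda\sim\delta^{-1/2}$ with integration by parts and the telescoping of $\int g''/(g')^2$ on the outer region, followed by the reduction from general $\psi$ to $\psi\equiv 1$ via the antiderivative $F$) is precisely the standard proof found in that reference, and all the steps, including the uniformity of the constant, check out.
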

%\begin{lemma}[see \cite{S1993}]\label{asymptotic} Suppose $g(x_0)=0$, and $g$ has a nondegenerate critical point at $x_0$. If $\psi$ is supported in a sufficiently neighborhood of $x_0$, then	\begin{equation*} \int_{\mathbb{R}^p}e^{itg(x)}\psi(x)\,dx\sim |t|^{-p/2}\sum_{j=0}^{\infty}a_j|t|^{-j}, \quad \text{ as t}\rightarrow \infty.\\	\end{equation*} \end{lemma}
Let $J_\nu$ be the Bessel function of order $\nu>-\frac{1}{2}$, defined as
\begin{equation*}
J_\nu(r)=\frac{(\frac{r}{2})^\nu}{\Gamma(\nu+\frac{1}{2})\pi^{\frac{1}{2}}}\int_{-1}^1e^{ir\tau}(1-\tau^2)^{\nu-\frac{1}{2}}d\tau.
\end{equation*}
We list some properties of $J_\nu$ in the following lemma.
\begin{lemma}[see \cite{Grafakos2008}]\label{Bessel}
For $r>0$ and $\nu>-\frac{1}{2}$, we have
\begin{align}
&(1)J_\nu(r)\leq C_\nu r^\nu,\label{bessel1}\\
&(2)\frac{d}{dr}\left(r^{-\nu}J_\nu(r)\right)=-r^{-\nu}J_{\nu+1}(r),\label{bessel2}\\
&(3)J_\nu(r)\leq C_\nu r^{-\frac{1}{2}}, \label{bessel3}\\
&(4)J_\nu(r)=\frac{(\frac{r}{2})^\nu}{\Gamma(\nu+\frac{1}{2})\Gamma(\frac{1}{2})} \left[ie^{-ir}\int_0^\infty e^{-rt}(t^2+2it)^{\nu-\frac{1}{2}}dt-ie^{ir}\int_0^\infty e^{-rt}(t^2-2it)^{\nu-\frac{1}{2}}dt\right].\label{bessel4}
\end{align}
\end{lemma}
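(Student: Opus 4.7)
The plan is to prove the four assertions in sequence, each leaning on the Poisson-type integral representation of $J_\nu$ stated just before the lemma, namely $J_\nu(r)=\frac{(r/2)^\nu}{\Gamma(\nu+1/2)\pi^{1/2}}\int_{-1}^{1}e^{ir\tau}(1-\tau^2)^{\nu-1/2}\,d\tau$, which is valid for $\nu>-1/2$. For (1), I would simply take absolute values inside this representation: since $|e^{ir\tau}|=1$, one obtains $|J_\nu(r)|\leq \frac{(r/2)^\nu}{\Gamma(\nu+1/2)\pi^{1/2}}\int_{-1}^{1}(1-\tau^2)^{\nu-1/2}\,d\tau$, and the remaining integral is a Beta integral equal to $B(\nu+\tfrac{1}{2},\tfrac{1}{2})$, which is finite and depends only on $\nu$.

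For (2), I would differentiate $r^{-\nu}J_\nu(r)=\frac{1}{2^\nu\Gamma(\nu+1/2)\pi^{1/2}}\int_{-1}^{1}e^{ir\tau}(1-\tau^2)^{\nu-1/2}\,d\tau$ under the integral sign (justified because the integrand has compact $\tau$-support), producing a factor $i\tau$. Then I would invoke the identity $\tau(1-\tau^2)^{\nu-1/2}=-\frac{1}{2\nu+1}\frac{d}{d\tau}(1-\tau^2)^{\nu+1/2}$ and integrate by parts in $\tau$; the boundary terms vanish thanks to $(1-\tau^2)^{\nu+1/2}=0$ at $\tau=\pm 1$ (here $\nu>-1/2$ is used). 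The surviving integral is precisely the Poisson representation at order $\nu+1$, and using $\Gamma(\nu+3/2)=(\nu+1/2)\Gamma(\nu+1/2)$ together with some bookkeeping of the $r$-powers yields exactly $-r^{-\nu}J_{\nu+1}(r)$.

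For (4), I would perform a contour deformation in the $\tau$-plane. The integrand $e^{ir\tau}(1-\tau^2)^{\nu-1/2}$ extends holomorphically to the slit plane with cuts along $(-\infty,-1]\cup[1,\infty)$, once a consistent branch of $(1-\tau^2)^{\nu-1/2}$ is fixed. I would replace the segment $[-1,1]$ by the hairpin consisting of the upward vertical $\tau=-1+it$, $t\in[0,T]$, a horizontal segment at height $iT$, and the downward vertical $\tau=1+it$, $t\in[T,0]$. The horizontal piece vanishes as $T\to\infty$ because $|e^{ir\tau}|=e^{-rt}\to 0$ for $r>0$, and on the vertical segments one computes $1-(\pm 1+it)^2=t^2\mp 2it$. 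Accounting for the factor $i\,dt$, the orientations, and factoring out $e^{\mp ir}$, one obtains the two integrals in (4); multiplying by the prefactor $\frac{(r/2)^\nu}{\Gamma(\nu+1/2)\Gamma(1/2)}$ (with $\pi^{1/2}=\Gamma(1/2)$) gives the stated identity. For (3), I would use (4): writing $t^2\pm 2it=\pm 2it(1\mp it/2)$ and substituting $u=rt$, each integral becomes $r^{-\nu-1/2}(\pm 2i)^{\nu-1/2}\int_0^\infty e^{-u}u^{\nu-1/2}(1\mp iu/(2r))^{\nu-1/2}\,du$. The last integral is bounded uniformly in $r\geq 1$ by a Gamma-function estimate, and combined with the prefactor $(r/2)^\nu$ this produces a constant multiple of $r^{-1/2}$; for $r$ in a neighborhood of the origin, (3) is absorbed into (1) by enlarging $C_\nu$ if necessary.

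The main obstacle I expect is in (4): fixing the branch of $(1-\tau^2)^{\nu-1/2}$ on the slit plane so that the two halves of the hairpin contour glue back together to reproduce the real-axis integral with correct phases, and verifying that the small semicircular indentations around $\tau=\pm 1$ contribute negligibly (again using $\nu>-1/2$ so that the singularity is integrable). Once (4) is in hand, the other three assertions are straightforward: (1) is a direct estimate, (2) is an integration by parts, and (3) is an elementary bound on the Laplace-type integrals appearing in (4).
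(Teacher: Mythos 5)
The paper offers no proof of this lemma, simply citing Grafakos's book, and your self-contained derivation is correct: it reproduces the standard argument from that reference (absolute values in the Poisson representation for (1), differentiation under the integral plus integration by parts for (2), the hairpin contour deformation for (4), and the rescaled Laplace-integral bound $u=rt$ for (3), with the small-$r$ case of (3) absorbed into (1) since $\nu+\frac{1}{2}>0$). The branch and corner-indentation issues you flag in (4) are handled exactly as you anticipate, since $1-\tau^2$ avoids $(-\infty,0]$ on the strip $\{-1<\operatorname{Re}\tau<1,\ \operatorname{Im}\tau>0\}$ and the indentation arcs contribute $O(\epsilon^{\nu+1/2})$.
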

\begin{remark}
From \eqref{bessel1} and \eqref{bessel2} of Lemma \ref{Bessel}, we can easily obtain that for any $0\leq s\leq 2$ and $\beta\in\mathbb{N}_{0}$,
\begin{equation}\label{small-s}
  \left|\frac{d^\beta}{dr^\beta}\left(\psi(r)\frac{J_{\frac{N-2}{2}}(rs)}{(rs)^\frac{N-2}{2}}  r^{N-1}\right)\right|\leq C_{\beta},
\end{equation}
where $\psi$ is the function in Section \ref{HBS}.
Again from \eqref{bessel4}, we have the identity
\begin{equation}\label{Bessel-Fourier}
    \frac{J_{\frac{N-2}{2}}(r)}{r^\frac{N-2}{2}}=C\left(e^{ir}h(r)+e^{-ir}\overline{h(r)}\right),
\end{equation}
 where $$h(r)=-i\int_0^\infty e^{-rt}(t^2-2it)^\frac{N-3}{2}dt.$$ Moreover,  integrating by parts (if necessary) and using Van der Corput’s Lemma,  for any $\beta\in\mathbb{N}_{0}$, one can get     
\begin{equation}\label{h}
\left|\frac{d^\beta}{dr^\beta}h(r)\right|\leq C_\beta(1+r)^{-\frac{N-1}{2}-\beta},
\end{equation}
for all $\beta\in\mathbb{N}_{0}$. Thus, from \eqref{h}, for any $s>2$ and $\beta\in\mathbb{N}_{0}$, we have
\begin{equation}\label{big-s}
\left|\frac{d^\beta}{dr^\beta}\left(\psi(r)h(rs) r^{N-1}\right)\right|\leq C_\beta s^{-\frac{N-1}{2}}.
\end{equation}
\end{remark}
We also exploit the following estimates, which can be easily proved by comparing the sums with the corresponding integrals.
\begin{lemma}\label{Sum}
Fix $\beta>0$. There exists $C_\beta>0$ such that for any $A>0$, we have
\begin{align*}
\sum_{j\in\mathbb{Z}, 2^j\leq A}2^{j\beta}&\leq C_\beta A^\beta,\\
\sum_{j\in\mathbb{Z}, 2^j>A}2^{-j\beta}&\leq C_\beta A^{-\beta}.
\end{align*}
\end{lemma}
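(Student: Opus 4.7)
The statement is an elementary estimate about geometric series, so my plan is to reduce both sums to convergent geometric series and bound the leading term in terms of $A^{\pm\beta}$.

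For the first inequality, let $j_0 \in \mathbb{Z}$ denote the largest integer with $2^{j_0} \le A$; equivalently $j_0 = \lfloor \log_2 A\rfloor$. Then the condition $2^j \le A$ is equivalent to $j \le j_0$, so I would write
\begin{equation*}
\sum_{\substack{j\in\mathbb{Z}\\ 2^j\le A}} 2^{j\beta} \;=\; \sum_{j\le j_0} 2^{j\beta} \;=\; 2^{j_0 \beta}\sum_{k\ge 0} 2^{-k\beta} \;=\; \frac{2^{j_0\beta}}{1-2^{-\beta}}.
\end{equation*}
Since $\beta>0$ the geometric series converges, and by the defining property $2^{j_0}\le A$ the right-hand side is bounded by $(1-2^{-\beta})^{-1}A^\beta$, giving the first inequality with $C_\beta = (1-2^{-\beta})^{-1}$.

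For the second inequality, let $j_1$ be the smallest integer with $2^{j_1}>A$, so $j_1 = \lfloor\log_2 A\rfloor + 1$ and $2^{-j_1}<A^{-1}$. Then $\{j\in\mathbb{Z}: 2^j>A\} = \{j\ge j_1\}$, and an identical geometric summation yields
\begin{equation*}
\sum_{\substack{j\in\mathbb{Z}\\ 2^j> A}} 2^{-j\beta} \;=\; 2^{-j_1\beta}\sum_{k\ge 0} 2^{-k\beta} \;=\; \frac{2^{-j_1\beta}}{1-2^{-\beta}} \;<\; \frac{A^{-\beta}}{1-2^{-\beta}}.
\end{equation*}

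There is no real obstacle here: the argument is purely the convergence of $\sum_{k\ge 0} 2^{-k\beta}$ for $\beta>0$, combined with extracting the leading term and comparing it to $A^{\pm\beta}$ via the definition of $j_0$ and $j_1$. One could also phrase this as a direct integral comparison $\sum 2^{\pm j\beta} \asymp \int 2^{\pm t\beta}\,dt$, but the explicit geometric-series computation above already gives the optimal constant $C_\beta = (1-2^{-\beta})^{-1}$ in a single line.
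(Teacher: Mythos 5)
Your proof is correct. The paper gives no written proof of this lemma at all; it simply asserts that the estimates ``can be easily proved by comparing the sums with the corresponding integrals,'' i.e.\ by bounding $\sum 2^{\pm j\beta}$ with $\int 2^{\pm t\beta}\,dt$. You instead evaluate the geometric series exactly after extracting the extremal index $j_0=\lfloor\log_2 A\rfloor$ (resp.\ $j_1=j_0+1$), which is an equally elementary route and has the small advantage of producing the explicit (and optimal for this method) constant $C_\beta=(1-2^{-\beta})^{-1}$, whereas the integral comparison yields a constant like $2^\beta/(\beta\ln 2)$. Both arguments rest on the same fact --- convergence of $\sum_{k\ge 0}2^{-k\beta}$ for $\beta>0$ --- and either suffices for the way the lemma is used in the proof of Theorem \ref{ResultTime}, where only the qualitative bounds $\sum_{2^j\le A}2^{jN}\lesssim A^N$ and $\sum_{2^j>A}2^{j(N-m_2\alpha)}\lesssim A^{N-m_2\alpha}$ are needed. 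No gaps.
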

Finally, we apply the following duality arguments (see \cite{GV}) to prove the Strichartz estimates for some concrete equations related to the Dunkl Laplacian.
\begin{lemma}\label{equ} Let $H$ be a Hilbert space, $X$ be a Banach space, $X^*$ is the dual of $X$, and $D$ be a vector space densely contained in $X$. Let $A\in \mathcal{L}_a(D,H)$ and   $A^*\in \mathcal{L}_a(H,D_a^*)$ be its adjoint operator, defined by
	\begin{equation*}
	\langle A^*v,f \rangle_D=\langle v,Af \rangle_H, \quad \forall f\in D, \quad \forall v \in H,
	\end{equation*}
where $\mathcal{L}_a(Y,Z)$ is the space of linear maps from a vector space $Y$ to a vector space $Z$, $D^*_a$ is the algebraic dual of $D$, $\langle \varphi,f\rangle_D$ is the pairing between $D^*_a$ and $D$, and $\langle\cdot,\cdot\rangle_H$ is the scalar product in $H$. Then the following three conditions are equivalent:
\begin{enumerate}
    \item There exists  $0\leqslant a \leqslant \infty$ such that for all $f \in D$,
	\begin{equation*}
	\|Af\| \leqslant a\|f\|_X.
	\end{equation*}
 \item  $\mathfrak{R}(A^*)\subset X^*$ and there exists   $0\leqslant a \leqslant \infty$, such that for all $v \in H$,
	\begin{equation*}
	\|A^*v\|_{X^*} \leqslant a\|v\|.
	\end{equation*}
 \item $\mathfrak{R}(A^*A)\subset X^*$,and there exists $a$, $0\leqslant a \leqslant \infty$, such that for all $f \in D$,
	\begin{equation*}
	\|A^*Af\|_{X^*} \leqslant a^2\|f\|_X,
	\end{equation*}
where $\|\cdot\|$ denote the norm in $H$. 
\end{enumerate}
  The constant $a$ is the same in all three parts. If one of those conditions is satisfied, the operators $A$ and $A^*A$ extend by continuity to bounded operators from $X$ to $H$ and from $X$ to $X^*$, respectively.
\end{lemma}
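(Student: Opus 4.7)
The plan is to establish the cycle $(1)\Rightarrow(2)\Rightarrow(3)\Rightarrow(1)$, so that all three conditions share the same constant $a$. The proof is essentially the classical $TT^*$ argument at the abstract level, and the only subtle points are handling the fact that $A^*$ is a priori only defined into the algebraic dual $D^*_a$ (not $X^*$) and recovering the sharp constant.

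For $(1)\Rightarrow(2)$, I assume $\|Af\|\leq a\|f\|_X$ for all $f\in D$. For $v\in H$, the linear functional $f\mapsto \langle A^*v,f\rangle_D=\langle v,Af\rangle_H$ on $D$ satisfies $|\langle v,Af\rangle_H|\leq \|v\|\,\|Af\|\leq a\|v\|\,\|f\|_X$ by Cauchy--Schwarz in $H$. Hence $A^*v$ extends to a continuous linear functional on the closure of $D$ in $X$, and by Hahn--Banach to an element of $X^*$ with $\|A^*v\|_{X^*}\leq a\|v\|$. In particular $\mathfrak{R}(A^*)\subset X^*$.

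For $(2)\Rightarrow(3)$ I simply compose. If $A^*\colon H\to X^*$ is bounded with norm $\leq a$, then for $f\in D$ the element $A^*Af$ lies in $X^*$ and $\|A^*Af\|_{X^*}\leq a\|Af\|_H$. To bound $\|Af\|_H$, I pair with $v=Af\in H$: using (2) and the definition of $A^*$,
\begin{equation*}
\|Af\|_H^2=\langle Af,Af\rangle_H=\langle A^*Af,f\rangle_D\leq \|A^*Af\|_{X^*}\|f\|_X\leq a\|Af\|_H\,\|f\|_X,
\end{equation*}
so $\|Af\|_H\leq a\|f\|_X$ and therefore $\|A^*Af\|_{X^*}\leq a^2\|f\|_X$.

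The key implication is $(3)\Rightarrow(1)$, and this is the only place where one actually needs the Hilbert space structure on $H$. Given $f\in D$, apply the same polarization trick: since $A^*Af\in X^*\subset D_a^*$,
\begin{equation*}
\|Af\|_H^2=\langle Af,Af\rangle_H=\langle A^*Af,f\rangle_D\leq \|A^*Af\|_{X^*}\|f\|_X\leq a^2\|f\|_X^2,
\end{equation*}
and taking square roots gives $\|Af\|_H\leq a\|f\|_X$, which is (1) with the same constant. The main obstacle here is purely bookkeeping: one must check that the pairing $\langle A^*Af,f\rangle_D$ genuinely equals $\langle Af,Af\rangle_H$, which follows directly from the definition of $A^*$ as a map into $D_a^*$, and then that the $X^*$-$X$ duality upper bound is legitimate because (3) places $A^*Af$ in $X^*$. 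Once all three conditions hold, the density of $D$ in $X$ lets both $A$ and $A^*A$ extend by continuity to bounded operators $X\to H$ and $X\to X^*$ respectively, with the same constants $a$ and $a^2$.
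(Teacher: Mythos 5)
Your proof is correct: the cycle $(1)\Rightarrow(2)\Rightarrow(3)\Rightarrow(1)$ via Cauchy--Schwarz, extension by density, and the polarization identity $\|Af\|_H^2=\langle A^*Af,f\rangle_D$ is exactly the classical $TT^*$ duality argument. The paper itself gives no proof of this lemma --- it is quoted verbatim from Ginibre--Velo \cite{GV} --- and your argument reproduces the standard one from that reference; the only cosmetic remark is that the appeal to Hahn--Banach in $(1)\Rightarrow(2)$ is superfluous, since density of $D$ in $X$ already yields the unique continuous extension of the functional $f\mapsto\langle v,Af\rangle_H$ to all of $X$.
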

\begin{lemma} \label{DuilatyXX} Let $H$, $D$ and two triplets $(X_i,A_i,a_i), i=1,2$, satisfy any of the conditions of Lemma \ref{equ}. Then for all choices of $i,j=1,2$, $\mathfrak{R}(A_i^*A_j) \subset X_i^*$, and for all $f \in D$, we have
	\begin{equation*}
	\|A_i^* A_jf\|_{X_i^*} \leqslant a_i a_j\|f\|_{X_j}.
	\end{equation*}
\end{lemma}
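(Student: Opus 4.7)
The plan is to chain together the two equivalent formulations provided by Lemma \ref{equ}. Because each triplet $(X_i, A_i, a_i)$ satisfies any (hence all) of the three equivalent conditions there, I may freely invoke condition (1) for one factor and condition (2) for the other: namely $\|A_j f\|_H \leq a_j \|f\|_{X_j}$ for every $f \in D$, and simultaneously $\mathfrak{R}(A_i^*) \subset X_i^*$ together with $\|A_i^* v\|_{X_i^*} \leq a_i \|v\|_H$ for every $v \in H$.

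With these two bounds in hand, the argument is essentially a one-line composition. Fix $i,j \in \{1,2\}$ and $f \in D$. Since $A_j \in \mathcal{L}_a(D,H)$, the element $v := A_j f$ lies in $H$, so $A_i^* v \in X_i^*$ by condition (2) applied to the $i$-th triplet; in particular $A_i^* A_j f \in X_i^*$, which establishes the range inclusion $\mathfrak{R}(A_i^* A_j) \subset X_i^*$. The norm estimate then follows by composing:
\begin{equation*}
\|A_i^* A_j f\|_{X_i^*} = \|A_i^* v\|_{X_i^*} \leq a_i \|v\|_H = a_i \|A_j f\|_H \leq a_i a_j \|f\|_{X_j}.
\end{equation*}

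There is no genuine obstacle here; the only thing to verify is compatibility of the domains (that $A_j f$ lies in $H$ so that $A_i^*$ can be applied), which is immediate from $A_j \in \mathcal{L}_a(D,H)$ and from $A_i^* \in \mathcal{L}_a(H, D_a^*)$ being refined by condition (2) to land in $X_i^* \subset D_a^*$. The whole point of invoking the equivalence in Lemma \ref{equ} is precisely to pick, for each of the two factors $A_j$ and $A_i^*$, the formulation most adapted to its role in the composition, so that no additional density or extension step is needed.
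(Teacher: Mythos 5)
Your argument is correct and is precisely the standard one: the paper states this lemma without proof, citing \cite{GV}, and the argument there is exactly your composition of condition (1) for the $j$-th triplet with condition (2) for the $i$-th triplet, giving $\|A_i^*A_jf\|_{X_i^*}\leq a_i\|A_jf\|_H\leq a_ia_j\|f\|_{X_j}$ together with the range inclusion. Nothing is missing.
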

\begin{lemma}\label{L1LW}  Let $H$ be a Hilbert space, $I$ be an interval of $\mathbb{R}$,    $X \subset \mathcal{S}'(I \times \mathbb{R}^n)$ be a Banach space which is stable under time restriction, and let $X$ and $A$ satisfy (any of) the conditions of Lemma \ref{equ}. Then the operator $A^*A$ is a bounded operator from $L_t^1(I,H)$ to $X^*$ and from $X$ to $L_t^\infty(I,H)$.
\end{lemma}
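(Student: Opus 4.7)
This is a standard $TT^*$ endpoint statement, and my plan is to derive it from Lemma~\ref{DuilatyXX} by supplementing the given triple $(X,A,a)$ with a second triple built on $L^1_t(I,H)$. The key new ingredient is to show that $A$ itself extends to a bounded operator $A:L^1_t(I,H)\to H$ with norm at most $1$. In the concrete realization used throughout the paper, $A$ acts on space–time test functions by $Af=\int_I U(s)^*f(s,\cdot)\,ds$ with $U(s)=e^{is\phi(\sqrt{-\Delta_\kappa})}$ unitary on $H$, so Minkowski's inequality for the Bochner integral gives
\begin{equation*}
\|Af\|_H\le \int_I \|U(s)^* f(s,\cdot)\|_H\,ds = \|f\|_{L^1_t(I,H)}.
\end{equation*}
The hypothesis that $X$ is stable under time restriction is precisely what lets one pick a common dense subspace of $X$ and $L^1_t(I,H)$ (e.g.\ finite linear combinations $\sum_i \chi_{J_i}\otimes g_i$ with $J_i\subset I$ and $g_i$ Schwartz in $x$) on which the two \emph{a priori} definitions of $A$ coincide, so the extension is unambiguous.

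With this in hand, the triple $(X_1,A_1,a_1)=(L^1_t(I,H),A,1)$ satisfies condition~(1) of Lemma~\ref{equ}, while $(X_2,A_2,a_2)=(X,A,a)$ does so by assumption. Applying Lemma~\ref{DuilatyXX} with $i=2$, $j=1$ yields $\mathfrak{R}(A^*A)\subset X^*$ together with
\begin{equation*}
\|A^*Af\|_{X^*}\le a\,\|f\|_{L^1_t(I,H)}\qquad\text{for all }f\in D,
\end{equation*}
and density of $D$ in $L^1_t(I,H)$ (again using time–restriction stability) promotes this to the desired continuous operator $A^*A:L^1_t(I,H)\to X^*$, proving the first claim.

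For the second conclusion I would argue by duality at the level of the symmetric bilinear form $B(u,v):=\langle Au,Av\rangle_H$, defined for $u,v\in D$. The first claim rephrases as $|B(u,v)|\le a\,\|u\|_{L^1_t(I,H)}\|v\|_X$, and by the symmetry of $B$ we get the same estimate with the roles swapped: $|B(u,v)|\le a\,\|u\|_X\|v\|_{L^1_t(I,H)}$. Because $H$ is a Hilbert space, it has the Radon--Nikodym property, so $(L^1_t(I,H))^*\cong L^\infty_t(I,H)$; the swapped inequality therefore identifies $A^*A$ as a bounded map $X\to L^\infty_t(I,H)$ with the same constant $a$.

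The only real obstacle is Step~1: cleanly extending $A$ from $D\subset X$ to $L^1_t(I,H)$. This is completely transparent in the applications, where $A$ is manifestly an integral against a unitary propagator and the triangle inequality does the job, but its \textbf{abstract} justification rests squarely on the ``stable under time restriction'' hypothesis, which is what provides the common dense subspace needed to identify the two extensions of $A$. Everything after Step~1 is pure functional–analytic bookkeeping via Lemma~\ref{DuilatyXX} and Hilbert-space duality.
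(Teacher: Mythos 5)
The paper itself offers no proof of Lemma \ref{L1LW}: it is imported verbatim from Ginibre--Velo \cite{GV} together with Lemmas \ref{equ} and \ref{DuilatyXX}, so there is nothing in-text to compare against. Your argument is the standard one from that source and is correct in the setting where the lemma is actually applied: once $\|Af\|_H\le\|f\|_{L_t^1(I,H)}$ is known on a common dense subspace, Lemma \ref{DuilatyXX} applied to the triples $(L_t^1(I,H),A,1)$ and $(X,A,a)$ yields both mapping properties at once; your bilinear-form step for the second conclusion is exactly Lemma \ref{DuilatyXX} with the indices swapped, combined with the Bochner duality $(L_t^1(I,H))^*\cong L_t^\infty(I,H)$, which is legitimate since $H$ is a Hilbert space.

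The one point to tighten is the justification of Step~1. The bound $\|Af\|_H\le\|f\|_{L_t^1(I,H)}$ does \emph{not} follow from the abstract hypotheses of Lemma \ref{equ} plus stability under time restriction. For instance, take $H=\mathbb{C}$, $X=L^2(I\times\mathbb{R}^n)$, and $Af=\int_{I\times\mathbb{R}^n} f\,\bar g\,dt\,dx$ with $g\in L^2_{t,x}\setminus L^\infty_t L^2_x$: the triple $(X,A,\|g\|_{L^2})$ satisfies Lemma \ref{equ}, $X$ is stable under time restriction, yet $A^*Af=(\int f\bar g)\,g$ is not controlled by $\|f\|_{L^1_tL^2_x}$. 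What Step~1 genuinely uses is the concrete form $Af=\int_I U(s)^*f(s)\,ds$ with $U(s)$ unitary on $H$ --- equivalently the energy identity $\|A^*v\|_{L_t^\infty(I,H)}=\|v\|_H$ --- and not the time-restriction hypothesis, whose role is only to ensure that the truncations $\chi_Jf$ remain in $X$ and that a dense subspace common to $X$ and $L_t^1(I,H)$ exists (it is also what drives the retarded estimates in Proposition \ref{General-Strichartz}, a separate matter). Since every application in Section \ref{sec5} takes $U_t=e^{it\phi(\sqrt{-\Delta_\kappa})}$, which is unitary on $L^2_\kappa(\mathbb{R}^n)$, your proof is sound for the purposes of this paper; just attribute the crucial inequality to the unitarity of the propagator rather than to the time-restriction hypothesis.
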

\section{Proof of main results}\label{sec4}
This section is devoted to proving  Theorem \ref{ResultTime}, i.e.,  decay estimate for a class of
dispersive   semigroup $e^{it\phi(\sqrt{-\Delta_\kappa})}$ on $\mathbb{R}^n$.
 \begin{proof}[Proof of Theorem \ref{ResultTime}]
 In order to prove  part \textbf{(A)} first,   we note that \begin{equation}\label{Ij}
U_t\tilde{\Delta}_ju_0=\mathcal{F}_\kappa^{-1}\left(e^{it\phi(|\cdot|)}\psi(2^{-j}|\cdot|)\mathcal{F}_\kappa u_0\right)=\mathcal{F}_\kappa^{-1}\left(e^{it\phi(|\cdot|)}\psi(2^{-j}|\cdot|)\right)*_\kappa u_0.
\end{equation}
It follows from Young's inequality \eqref{Young} that 
\begin{equation*}
\|U_t\tilde{\Delta}_ju_0\|_{L_\kappa^\infty(\mathbb{R}^n)}\leq \|\mathcal{F}_\kappa^{-1}\left(e^{it\phi(|\cdot|)}\psi(2^{-j}|\cdot|)\right)\|_{L_\kappa^\infty(\mathbb{R}^n)}\|u_0\|_{L_\kappa^1(\mathbb{R}^n)}.
\end{equation*}
 Now our aim is to  estimate $I_j(x)=\mathcal{F}_\kappa^{-1}\left(e^{it\phi(|\cdot|)}\psi(2^{-j}|\cdot|)\right)(x)$. From the Dunkl transform formula of a radial function \eqref{radial-Dunkl}, we have
\begin{equation}\label{IIj}
\begin{aligned}
I_j(x)&=\mathcal{F}_\kappa^{-1}\left(e^{it\phi(|\cdot|)}\psi(2^{-j}|\cdot|)\right)(x)\\
&=\frac{1}{\Gamma(N/2)}\int_0^\infty e^{it\phi(r)}\psi(2^{-j}r)\frac{J_\frac{N-2}{2}(r|x|)}{\left(r|x|\right)^\frac{N-2}{2}}r^{N-1} dr\\
&=\frac{{ 2^{jN}}}{\Gamma(N/2)}\int_0^\infty e^{it\phi(2^jr)}\psi(r)\frac{J_\frac{N-2}{2}(2^jr|x|)}{\left(2^jr|x|\right)^\frac{N-2}{2}}r^{N-1} dr\\
&=\uppercase\expandafter{\romannumeral2}_{j}(2^js),
\end{aligned}
\end{equation}
where $s=|x|$ and 
\begin{equation*}
\uppercase\expandafter{\romannumeral2}_{j}(s)=\frac{ 2^{jN}}{\Gamma(N/2)}\int_0^\infty e^{it\phi(2^jr)}\psi(r)\frac{J_\frac{N-2}{2}(rs)}{\left(rs\right)^\frac{N-2}{2}}r^{N-1} dr.
\end{equation*}
We will continue the discussion in two cases. First, we use the vanishing property at the origin \eqref{bessel1} and the recurring property for the Bessel functions \eqref{bessel2}. On the other hand, we apply the decay property of the Bessel function \eqref{Bessel-Fourier}.

\textbf{\underline{Case 1.} $s\leq 2$:} %In this case, we shall use the vanishing property at the origin. 
By denoting $D_r=\frac{1}{i2^{j}t\phi'(2^{j}r)}\frac{d}{dr}$, we notice  that
\begin{equation*}
D_r\left(e^{it\phi(2^{j}r)}\right)=e^{it\phi(2^{j}r)}, \; D_r^*f=\frac{i}{2^{j}t}\frac{d}{dr}\left(\frac{1}{\phi'(2^{j}r)}f\right).
\end{equation*}
For $\beta\in\mathbb{N}$ and $r\in [1/2,2]$, it follows from the hypothesis (C1) that
\begin{equation}\label{phase-prime}
 \frac{d^\beta}{dr^\beta}\left(\frac{1}{\phi'(2^{j}r)}\right)\leq C_\beta 2^{-j(m_1-1)}.
\end{equation}
Using integration by part, we have
 \begin{equation*}
	\begin{aligned}
	\uppercase\expandafter{\romannumeral2}_j(s)&=\frac{{ 2^{jN}}}{\Gamma(N/2)}\int_0^\infty e^{it\phi(2^jr)}\psi(r)\frac{J_\frac{N-2}{2}(rs)}{\left(rs\right)^\frac{N-2}{2}}r^{N-1} dr\\
    &=\frac{{ 2^{jN}}}{\Gamma(N/2)} \int_0^\infty D_r\left(e^{it\phi(2^{2j}r)}\right)\psi(r)\frac{J_\frac{N-2}{2}(rs)}{\left(rs\right)^\frac{N-2}{2}}r^{N-1} dr\\
    &=\frac{{ 2^{jN}}}{\Gamma(N/2)}\cdot \frac{i}{2^{j}t}\int_0^\infty e^{it\phi(2^jr)}\frac{d}{dr}\left(\frac{1}{\phi'(2^{j}r)}\psi(r)\frac{J_\frac{N-2}{2}(rs)}{\left(rs\right)^\frac{N-2}{2}}r^{N-1}\right) dr\\
    &=\frac{{ 2^{jN}}}{\Gamma(N/2)}\cdot \left(\frac{i}{2^{j}t}\right)^{ Q}\sum_{\ell=0}^Q\sum_{(\beta_1,\beta_2,\cdots,\beta_Q)\in \Lambda_Q^\ell} C_{Q,\ell}\int_0^\infty  e^{it\phi(2^{j}r)}\\
     &\qquad \qquad\times \prod_{i=1}^Q\frac{d^{\beta_i}}{dr^{\beta_i}}\left(\frac{1}{\phi'(2^{j}r)}\right)\frac{d^{Q-\ell}}{dr^{Q-\ell}}\left(\psi(r)\frac{J_\frac{N-2}{2}(rs)}{\left(rs\right)^\frac{N-2}{2}}r^{N-1} \right)\,dr,
    \end{aligned}
	\end{equation*}
for any $Q\in \mathbb N$,  where $\Lambda_Q^\ell=\{(\beta_1,\beta_2,\cdots,\beta_Q)\in\{0,1,2,\cdots,\ell\}^Q: \sum\limits_{i=1}^Q\beta_i=\ell\}$. Now using   the recurring property for the Bessel functions \eqref{bessel2}, the vanishing property at the origin \eqref{bessel1}, and the estimate \eqref{phase-prime}, we have
 \begin{equation}\label{sharp-s-small}
|\uppercase\expandafter{\romannumeral2}_j(s)|\leq C_{Q,N,k}|t|^{-Q}2^{j(N-m_1Q)}, 
 \end{equation} for any $Q\in \mathbb N$. Using Riesz-Thorin interpolation   between different $Q\in\mathbb{N}$ in \eqref{sharp-s-small}, for any $  \theta\geq0$, we have
\begin{equation}\label{sharp-s-small}
|\uppercase\expandafter{\romannumeral2}_j(s)|\lesssim |t|^{-\theta}2^{j(N-m_1\theta)}.
 \end{equation}  
 This completes the proof of (A) in this case.   
 
\textbf{\underline{Case 2.}} $s>2$. In this case, we shall apply the decay property of Bessel functions.
From  the identity \eqref{Bessel-Fourier}, we can write
\begin{align*}
\uppercase\expandafter{\romannumeral2}_j(s)
&=C2^{jN}\int_0^\infty  e^{it\phi(2^{j}r)}\psi(r)\left(e^{irs}h(rs)+e^{-irs}\overline{h(rs)}\right)r^{N-1} \,dr\\
    &=C2^{jN}\int_0^\infty  e^{i\left(t\phi(2^{j}r)+rs\right)}\psi(r) h(rs) r^{N-1}
    \,dr\\
    &\qquad +C2^{jN}\int_0^\infty  e^{i\left(t\phi(2^{j}r)-rs\right)}\psi(r) \overline{h(rs)} r^{N-1}
    \,dr\\ 
    &= \uppercase\expandafter{\romannumeral2}^{+}_j(s)+\uppercase\expandafter{\romannumeral2}^{-}_j(s),
\end{align*}
where
\begin{align*}
\uppercase\expandafter{\romannumeral2}^+_j(s)&=C2^{jN}\int_0^\infty  e^{i\left(t\phi(2^{j}r)+rs\right)}\psi(r) h(rs) r^{N-1}
    \,dr=C2^{jN}\int_0^\infty  e^{it\phi_+(2^jr)}\psi(r) h(rs) r^{N-1}
    \,dr,\\
\uppercase\expandafter{\romannumeral2}^-_j(s)&=C2^{jN}\int_0^\infty  e^{i\left(t\phi(2^{j}r)-rs\right)}\psi(r) \overline{h(r|s|)} r^{N-1}
    \,dr=C2^{jN}\int_0^\infty  e^{it\phi_-(2^jr)}\psi(r) \overline{h(rs)} r^{N-1}
    \,dr,
\end{align*}
with $\phi_\pm(r)=\phi(r)\pm \frac{rs}{2^{j}t}$.
Without loss of generality, we can assume that $t>0$ and $\phi'(r)>0$. Now we will estimate $\uppercase\expandafter{\romannumeral2}^+_j(s)$ and $\uppercase\expandafter{\romannumeral2}^{-}_j(s)$ separately.

 For $\uppercase\expandafter{\romannumeral2}^+_j(s)$, note that $\phi'_+(r)\geq \phi'(r)$ and \eqref{phase-prime} also holds true if we replace $\phi$ by $\phi_+$. Then, proceeding similarly as \textbf{Case 1}, combined with \eqref{big-s},    for any $\theta\geq0,$ we obtain
\begin{equation}\label{sharp-s-big-plus}
|\uppercase\expandafter{\romannumeral2}^+_j(s)|\lesssim |t|^{-\theta}2^{j(N-m_1\theta)}.
 \end{equation}
 
 For $\uppercase\expandafter{\romannumeral2}^-_j(s)$, $\phi'_-(2^{j}r)=\phi'(2^{j}r)-\frac{s}{2^{j}t}$. Noticing that if $s=2^{j}t\phi'(2^{j}r)$, then $\phi'_-(2^{j}r)=0$. We continue our discussion by considering the following three sub-cases:
\begin{itemize}
\item When  $s\geq 2\sup\limits_{r\in[1/2,2]}2^{j}t\phi'(2^{j}r)$:\\
In this case, we have $|\phi'_-(2^{j}r)|=\frac{1}{2^{j}t}\left(s-2^{j}t\phi'(2^{j}r)\right)\geq \phi'(2^{j}r)$ and also the estimate 
 \eqref{phase-prime}   hold true if we replace $\phi$ by $\phi_-$. Then, analogous to $\uppercase\expandafter{\romannumeral2}^+_{j}(s)$, for any $\theta\geq0$, we obtain
\begin{equation}\label{sharp-s-big-minus1}
|\uppercase\expandafter{\romannumeral2}^-_j(s)|\lesssim |t|^{-\theta}2^{j(N-m_1\theta)}.
 \end{equation}

\item  When $s\leq \frac{1}{2}\inf\limits_{r\in[1/2,2]}2^{j}t\phi'(2^{j}r)$: \\
 In this case, we have $\phi'_-(2^{j}r)=\frac{1}{2^{j}t}\left(2^{j}t\phi'(2^{j}r)-s\right)\geq \frac{1}{2}\phi'(2^{j}r)$ and \eqref{phase-prime} also hold true if we replace $\phi$ by $\phi_-$. Analogous to $\uppercase\expandafter{\romannumeral2}^+_{j}(s)$, for any $\theta\geq0$, we obtain
\begin{equation}\label{sharp-s-big-minus2}
|\uppercase\expandafter{\romannumeral2}^-_j(s)|\lesssim |t|^{-\theta}2^{j(N-m_1\theta)}.
 \end{equation}
 
\item When $\frac{1}{2}\inf\limits_{r\in[1/2,2]}2^{j}t\phi'(2^{j}r)\leq s\leq 2\sup\limits_{r\in[1/2,2]}2^{j}t\phi'(2^{j}r)$: \\
In this case, we see that $s\sim t2^{jm_1}$. Moreover,  from \eqref{big-s}, it follows  that
 \begin{equation}\label{sharp-s-big-minus3}
 \begin{aligned}
  |\uppercase\expandafter{\romannumeral2}^-_j(s)|&\lesssim 2^{jN}s^{-(N-1)/2}\\
  &\lesssim|t|^{-(N-1)/2}2^{j\left(N-m_1(N-1)/2\right)}.
  \end{aligned}
 \end{equation}
\end{itemize}
Now using the estimate \eqref{h}   in $\uppercase\expandafter{\romannumeral2}^-_j(s)$, a simple calculation yields the following easy estimate
\begin{equation}\label{sharp-s-big-minus3-trival}
  |\uppercase\expandafter{\romannumeral2}^-_j(s)|\lesssim 2^{jN}.
\end{equation}	
Applying interpolation  between \eqref{sharp-s-big-minus3} and  \eqref{sharp-s-big-minus3-trival}, we obtain 
\begin{equation}\label{sharp}
|\uppercase\expandafter{\romannumeral2}^-_j(s)|\lesssim |t|^{-\theta}2^{j\left(N-m_1\theta\right)}
 \end{equation}
 for all $0\leq\theta\leq(N-1)/2$.
Combining  \eqref{sharp} and \eqref{sharp-s-small}, we complete the proof of \eqref{res3-2} in $(A)$.

If (C3) holds in addition, we have $\left|\frac{d^2}{dr^2}\left(\phi_-(2^{j}r)\right)\right|=2^{2j}|\phi''(2^{j}r)|\sim 2^{j\alpha_1}$  in the last sub-case discussed above. It follows from Lemma \ref{phase} and \eqref{big-s} that
\begin{equation}\label{better-sharp-s-big-minus3}
\begin{aligned}
|\uppercase\expandafter{\romannumeral2}^-_j(s)| \lesssim 2^{jN}|t2^{j\alpha_1}|^{-1/2}s^{-(N-1)/2}
\lesssim |t|^{-N/2}2^{j\left(N-m_1 N/2-(\alpha_1-m_1)/2\right)}.
\end{aligned}
 \end{equation}
For any $0\leq \theta\leq 1$,   since $(N-1+\theta)/2=(1-\theta)(N-1)/2+\theta N/2$, by an interpolation between \eqref{sharp} (with $\theta=(N-1)/2$) and \eqref{better-sharp-s-big-minus3}, we obtain
\begin{equation}\label{Better-sharp-s-big-minus3}
|\uppercase\expandafter{\romannumeral2}^-_j(s)|\lesssim |t|^{-(N-1+\theta)/2}2^{j\left(N-m_1(N-1+\theta)/2-\theta(\alpha_1-m_1)/2\right)}.
 \end{equation}
Noting that $\alpha_1\leq m_1$, from \eqref{sharp-s-small}, \eqref{sharp-s-big-plus}, \eqref{sharp-s-big-minus1}, \eqref{sharp-s-big-minus2} and \eqref{Better-sharp-s-big-minus3}, we have
\begin{equation}\label{better-sharp}
|\uppercase\expandafter{\romannumeral2}_j(s)|\lesssim |t|^{-(N-1+\theta)/2}2^{j\left(N-m_1(N-1+\theta)/2-\theta(\alpha_1-m_1)/2\right)},
 \end{equation}
 for all $0\leq\theta\leq 1$. This completes the proof of \eqref{res3-3} in part (A).\\

Now we will prove part {(C)}, as the proof of part {(B)} is similar to part   {(A)}. To prove part  {(C)}, take any fixed $\theta$ satisfying $0\leq\theta\leq\min\left(\frac{N}{m_2},\frac{N-1}{2}\right)$. First, we consider the case  when  $\theta=\frac{N-1}{2}<\frac{N}{m_2}$, i.e., $N-m_2\theta>0$. Now applying part (B), we obtain
 \begin{equation*}
\begin{aligned}
\|U_t S_0u_0\|_{L_\kappa^\infty(\mathbb{R}^n)}&\lesssim\left\|\sum_{j\leq 0}U_t\tilde{\Delta}_jS_0u_0\right\|_{L_\kappa^\infty(\mathbb{R}^n)}\\
&\leq \sum_{j\leq 0}\left\|U_t\tilde{\Delta}_jS_0u_0\right\|_{L_\kappa^\infty(\mathbb{R}^n)}\\
&\lesssim\sum_{j\leq  0}|t|^{-\theta}2^{j(N-m_2\theta)}\left\|S_0u_0\right\|_{L_\kappa^1(\mathbb{R}^n)}\\
&\lesssim |t|^{-\theta}  \left\|S_0u_0\right\|_{L_\kappa^1(\mathbb{R}^n)}\lesssim  |t|^{-\frac{(N-1)}{2}}\|u_0\|_{L_\kappa^1(\mathbb{R}^n)}.
\end{aligned}
\end{equation*}
Therefore, it suffices to consider the case $\theta=\frac{N}{m_2}\leq\frac{N-1}{2}$. By \eqref{Ij} and \eqref{IIj}, we have
\begin{equation}\label{U2}
U_t S_0u_0=\sum_{j\leq 0}U_t\tilde{\Delta}_j (S_0u_0)=\left(\sum_{j\leq 0} \uppercase\expandafter{\romannumeral2}_j\left(2^j|\cdot|\right)\right)*_\kappa(S_0u_0).
\end{equation}
Argued similarly as (a) but for the proof of (b), we know that if $j_0< 0$ and $  s\sim |t|2^{j_0m_2}>2$, then
\begin{equation}\label{sim}
 \begin{aligned}
  \left|\uppercase\expandafter{\romannumeral2}_{j_0}(2^{j_0}s)\right|
  &\lesssim |t|^{-(N-1)/2}2^{j_0\left(N-m_2(N-1)/2\right)}\\
  &\lesssim |t|^{-(N-1)/2}\left|\frac{s}{t}\right|^{\left(N-m_2(N-1)/2\right)\frac{1}{m_2}}\\
  &\lesssim |t|^{-\frac{N}{m_2}}.
  \end{aligned}
 \end{equation}
When $|j-j_0|>C\gg1$, it holds true that
 \begin{equation}\label{gg}
\left|\uppercase\expandafter{\romannumeral2}_{j}(2^js)\right|\lesssim|t|^{-\alpha}2^{j(N-m_2\alpha)},\;\forall \alpha\geq0.
 \end{equation}
Hence, by \eqref{sim} and \eqref{gg}, we get
 \begin{align}
  &\quad \sum_{j\leq0}\left|\uppercase\expandafter{\romannumeral2}_j\left(2^js\right)\right|\nonumber\\
  &\leq \left(\sum_{|j-j_0|\leq C}+\sum_{|j-j_0|>C,\;2^j\leq|t|^{-\frac{1}{m_2}}}+\sum_{|j-j_0|>C,\;2^j>|t|^{-\frac{1}{m_2}}}\right)\left|\uppercase\expandafter{\romannumeral2}_j\left(2^js\right)\right|\nonumber\\
  &\lesssim |t|^{-\frac{N}{m_2}}+\sum_{2^j\leq|t|^{-\frac{1}{m_2}}}2^{jN}+\sum_{2^j>|t|^{-\frac{1}{m_2}}}|t|^{-\alpha}2^{j(N-m_2\alpha)}\nonumber\\
  &\lesssim |t|^{-\frac{N}{m_2}},\label{last}
  \end{align}
 where the last inequality is obtained by applying Lemma \ref{Sum} and choosing $\alpha>\frac{N}{m_2}$.  By Young's inequality \eqref{Young}, combined with \eqref{U2} and \eqref{last}, we obtain
 \begin{align*}
\|U_t S_0u_0\|_{L_\kappa^\infty(\mathbb{R}^n)}&\leq \left\|\sum_{j\leq 0}\uppercase\expandafter{\romannumeral2}_j\left(2^j|\cdot|\right)\right\|_{L_\kappa^\infty(\mathbb{R}^n)}\|S_0u_0\|_{L_\kappa^1(\mathbb{R}^n)}\\
&\lesssim |t|^{-\frac{N}{m_2}}\|S_0u_0\|_{L_\kappa^1(\mathbb{R}^n)}\lesssim  |t|^{-\frac{N}{m_2}}\|u_0\|_{L_\kappa^1(\mathbb{R}^n)},
\end{align*}
which proves \eqref{res-sum1}. In addition, if (C4) holds, we can argue similarly as previously.
 \end{proof}
\section{Applications}\label{sec5}
In this section, we provide several applications of Theorem \ref{ResultTime} by considering some particular smooth function $\phi$. In particular, we prove Strichartz estimates for some concrete wave equations related to the Dunkl Laplacian by using Theorem \ref{ResultTime}. Above all, we introduce the general Strichartz estimates for $U_t=e^{it\phi(\sqrt{-\Delta_k})}$. We start this section by recalling the following versions of the Hardy-Littlewood-Sobolev inequality:
\begin{lemma}[\cite{GPW2008}]\label{VHLW} Assume $\theta_1,\theta_2\in\mathbb{R}$. Let
\begin{equation*}
    k(y)=\begin{cases}
	|y|^{-\theta_1} , \, &|y|\leq1,\\
	|y|^{-\theta_2}, \,&|y|>1.
 \end{cases}
\end{equation*}
Assume that one of the following condition holds:
\begin{enumerate}
    \item $0<\theta_1=\theta_2<n$, $1<p<q<\infty$ and $1-\frac{1}{p}+\frac{1}{q}=\frac{\theta_1}{n}$,
    \item $\theta_1<\theta_2$, $0<\theta_1<n$, $1<p<q<\infty$ and $1-\frac{1}{p}+\frac{1}{q}=\frac{\theta_1}{n}$,
    \item $\theta_1<\theta_2$, $0<\theta_2<n$, $1<p<q<\infty$ and $1-\frac{1}{p}+\frac{1}{q}=\frac{\theta_2}{n}$,
    \item $\theta_1<\theta_2$, $1\leq p\leq q\leq \infty$ and $\frac{\theta_1}{n}<1-\frac{1}{p}+\frac{1}{q}<\frac{\theta_2}{n}$.
\end{enumerate}
We have
\begin{equation*}
\left\|\int_{\mathbb{R}^n}f(\cdot-y)k(y)dy\right\|_{L^q(\mathbb{R}^n)}\lesssim \|f\|_{L^p(\mathbb{R}^n)}.
\end{equation*}
\end{lemma}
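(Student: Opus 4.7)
The plan is to reduce each case to either the classical Hardy--Littlewood--Sobolev inequality or to Young's convolution inequality, via a pointwise domination of the kernel $k$. The key geometric observation I would start from is: since $\theta_1 \leq \theta_2$ in every case, comparing the two power weights yields $|y|^{-\theta_2} \leq |y|^{-\theta_1}$ for $|y| > 1$ and $|y|^{-\theta_1} \leq |y|^{-\theta_2}$ for $|y| \leq 1$. This single observation will do almost all of the work for Cases (1)--(3).

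Case (1) is immediate: $k(y) = |y|^{-\theta_1}$ globally, and the hypothesis $1 - 1/p + 1/q = \theta_1/n$ with $0 < \theta_1 < n$ is exactly the critical scaling for classical Hardy--Littlewood--Sobolev, which I would simply quote. For Cases (2) and (3), I would upgrade the geometric observation to a global pointwise bound. In Case (2), the observation gives $k(y) \leq |y|^{-\theta_1}$ on $\mathbb{R}^n \setminus \{0\}$, so that $|f * k|(x) \leq (|f| * |\cdot|^{-\theta_1})(x)$, and the classical Hardy--Littlewood--Sobolev inequality at exponent $\theta_1$ (which matches the hypothesized exponent balance) closes the estimate. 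Case (3) is symmetric: now $k(y) \leq |y|^{-\theta_2}$ globally, and I would invoke Hardy--Littlewood--Sobolev at exponent $\theta_2$ instead.

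Case (4) is the one that genuinely needs a different mechanism, since the exponent balance $1 - 1/p + 1/q$ lies strictly between $\theta_1/n$ and $\theta_2/n$ and no single Riesz potential dominates $k$ with the correct scaling. Here I would apply Young's convolution inequality $\|f * k\|_{L^q} \leq \|f\|_{L^p} \|k\|_{L^r}$ with $1/r = 1 - 1/p + 1/q$, and verify directly that $k \in L^r(\mathbb{R}^n)$ by splitting $\|k\|_{L^r}^r$ at $|y| = 1$: the local piece $\int_{|y| \leq 1} |y|^{-\theta_1 r}\,dy$ is finite iff $\theta_1 r < n$ (i.e.\ $1/r > \theta_1/n$), while the tail piece $\int_{|y| > 1} |y|^{-\theta_2 r}\,dy$ is finite iff $\theta_2 r > n$ (i.e.\ $1/r < \theta_2/n$); the assumed strict double inequality is precisely the statement that $1/r$ lies in this window.

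The one bookkeeping point I would watch is the endpoint exponents $p = 1$ or $q = \infty$ in Case (4), but Young's inequality remains valid in the full range $1 \leq p, q, r \leq \infty$, so these introduce no issue. There is no serious analytic obstacle in this lemma; the real content is recognizing that the non-homogeneous kernel can always be estimated either by a single Riesz potential (when the exponents sit on one of the two critical lines, Cases 1--3) or by a genuine $L^r$ function (when they sit strictly inside the interpolation window, Case 4).
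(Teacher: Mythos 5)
Your proof is correct. Note that the paper itself gives no argument for this lemma: it is quoted verbatim from Guo--Peng--Wang \cite{GPW2008} and used as a black box, so there is nothing internal to compare against; your write-up supplies the standard proof of the cited result. The three mechanisms you identify are exactly right: the monotonicity comparison $|y|^{-\theta_2}\leq|y|^{-\theta_1}$ for $|y|>1$ and $|y|^{-\theta_1}\leq|y|^{-\theta_2}$ for $|y|\leq1$ reduces Cases (1)--(3) to the classical Hardy--Littlewood--Sobolev inequality at the single critical exponent appearing in the hypothesis, and in Case (4) the strict window $\theta_1/n<1/r<\theta_2/n$ is precisely the integrability condition $\theta_1 r<n<\theta_2 r$ for $k\in L^r$, after which Young's inequality closes the estimate. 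The only cosmetic caveat is that at the endpoint $r=\infty$ (i.e.\ $p=1$, $q=\infty$) the membership $k\in L^\infty$ should be checked as a supremum bound rather than via the integral criterion, which your condition $1/r>\theta_1/n$, i.e.\ $\theta_1<0$, still delivers; this does not affect the argument.
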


\begin{definition}
For $\theta_1, \theta_2\in\mathbb{R}$ with $\theta_1<\theta_2$, we say $r\in E(\theta_1,\theta_2)$ if one of the following conditions holds:
\begin{enumerate}
    \item $0<\theta_1=\theta_2<1$ and $r=\frac{2}{\theta_1}$,
    \item $\theta_1<\theta_2$, $0<\theta_1<1$ and $r=\frac{2}{\theta_1}$,
    \item $\theta_1<\theta_2$, $0<\theta_2<1$ and $r=\frac{2}{\theta_2}$,
    \item $\theta_1<\theta_2$, $2\leq r\leq\infty$ and $\theta_1<\frac{2}{r}<\theta_2$.
\end{enumerate}
\end{definition}
 For $2\leq p\leq \infty$, $s=s(p)\in\mathbb{R}$ and $\theta_1\leq \theta_2$,  we assume that   
\begin{equation}\label{General-Assumption}
    \|U_tf\|_{B_{p,2}^{s,\kappa}(\mathbb{R}^n)}\lesssim k(t) \|f\|_{B_{p',2}^{-s,\kappa}(\mathbb{R}^n)}, \quad\text{where } k(t)=\begin{cases}
	|t|^{-\theta_1} , \, &|t|\leq1,\\
	|t|^{-\theta_2}, \,&|t|>1.
	\end{cases}
\end{equation}
Once we obtain this dispersive estimate with different decay rates between $|t|>1$ and $|t|\leq 1$, we can prove the following proposition by using Lemma \ref{VHLW} and standard duality arguments in Lemma \ref{equ}-\ref{L1LW}.
\begin{proposition}[see \cite{GPW2008, KT}]\label{General-Strichartz}
    Assume $U_t$ satisfies \eqref{General-Assumption}. Then for $r\in E(\theta_1,\theta_2)$ and $T>0$, we have
    \begin{align*}
        \|U_tf\|_{L^r\left(\mathbb{R},B^{s,\kappa}_{p,2}(\mathbb{R}^n)\right)}&\lesssim \|f\|_{L_\kappa^2(\mathbb{R}^n)},\\
        \|\int_0^tU_{t-\tau}g(\tau,\cdot)d\tau\|_{L^r\left([-T,T],B^{s,\kappa}_{p,2}(\mathbb{R}^n)\right)}&\lesssim \|g\|_{L^{r'}\left([-T,T],B^{-s,\kappa}_{p',2}(\mathbb{R}^n)\right)},\\
       \|\int_0^tU_{t-\tau}g(\tau,\cdot)d\tau \|_{L^\infty\left([-T,T],H^{s,\kappa}(\mathbb{R}^n)\right)}&\lesssim \|g\|_{L^{r'}\left([-T,T],B_{p',2}^{-s,\kappa}(\mathbb{R}^n)\right)},\\
    \|\int_0^tU_{t-\tau}g(\tau,\cdot)d\tau\|_{L^r\left([-T,T],B_{p,2}^{s,\kappa}(\mathbb{R}^n)\right)}&\lesssim \|g\|_{L^1\left([-T,T],L_\kappa^2(\mathbb{R}^n)\right)}.
    \end{align*}
\end{proposition}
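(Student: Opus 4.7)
The plan is to apply the classical $TT^*$ duality framework codified in Lemmas \ref{equ}, \ref{DuilatyXX}, and \ref{L1LW}, combined with the one-dimensional Hardy--Littlewood--Sobolev estimate of Lemma \ref{VHLW} applied to the time variable. First I would fix $H = L^2_\kappa(\mathbb R^n)$, choose $X = L^{r'}([-T,T], B^{-s,\kappa}_{p',2}(\mathbb R^n))$ so that $X^* = L^r([-T,T], B^{s,\kappa}_{p,2}(\mathbb R^n))$, and define $A \colon H \to X^*$ by $Af = (t \mapsto U_t f)$. Since $\phi(|\xi|)$ is real-valued, $U_t$ is unitary on $L^2_\kappa$, so the formal adjoint reads $A^* g = \int U_{-\tau} g(\tau)\,d\tau$ and $AA^* g(t) = \int U_{t-\tau} g(\tau)\,d\tau$ is the non-retarded Duhamel integral.

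The main step will be to establish the bilinear bound $\|AA^* g\|_{L^r_t B^{s,\kappa}_{p,2}} \lesssim \|g\|_{L^{r'}_t B^{-s,\kappa}_{p',2}}$. Applying hypothesis \eqref{General-Assumption} pointwise in $\tau$ yields
\begin{equation*}
\|U_{t-\tau} g(\tau)\|_{B^{s,\kappa}_{p,2}} \lesssim k(t-\tau)\,\|g(\tau)\|_{B^{-s,\kappa}_{p',2}},
\end{equation*}
so that $\|AA^* g(t)\|_{B^{s,\kappa}_{p,2}}$ is dominated by the convolution $(k \ast_t \|g(\cdot)\|_{B^{-s,\kappa}_{p',2}})(t)$. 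Taking the $L^r_t$ norm and invoking Lemma \ref{VHLW} with $n = 1$, $p = r'$, $q = r$ produces the bound; the scaling relation $1 - 1/r' + 1/r = 2/r$, together with the definition of $E(\theta_1,\theta_2)$, places $2/r$ into exactly one of the four admissible configurations of Lemma \ref{VHLW}, thereby covering both the double-endpoint case $\theta_1 = \theta_2$ and the scale-separated case with boundary values $r = 2/\theta_1$ or $r = 2/\theta_2$.

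Once this bilinear estimate is in hand, the four Strichartz inequalities will follow from the duality machinery. By Lemma \ref{equ} the bound on $AA^*$ is equivalent to the homogeneous estimate $\|U_t f\|_{L^r_t B^{s,\kappa}_{p,2}} \lesssim \|f\|_{L^2_\kappa}$, which gives the first line. Lemma \ref{L1LW} applied with the same triplet yields boundedness of $A^*A$ from $L^1_t L^2_\kappa$ to $X^*$ and from $X$ to $L^\infty_t L^2_\kappa$; restricting to $[-T,T]$ and converting the non-retarded integral to the retarded one $\int_0^t U_{t-\tau} g(\tau)\,d\tau$ via the Christ--Kiselev lemma will give the fourth and second estimates. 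The second estimate also uses Lemma \ref{DuilatyXX} to combine the $AA^*$ bound with the homogeneous one. For the third estimate, the $H^{s,\kappa}$ norm on the left is recovered from the $L^2_\kappa$ norm produced by Lemma \ref{L1LW} by commuting $(-\Delta_\kappa)^{s/2}$ past $U_t$ and using the identification $H^{s,\kappa}(\mathbb R^n) = B^{s,\kappa}_{2,2}(\mathbb R^n)$ from Proposition \ref{properties}.

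The most delicate point I anticipate is the endpoint scenario $r = 2/\theta_1$ or $r = 2/\theta_2$ in $E(\theta_1,\theta_2)$, where the HLS inequality sits at its scaling boundary. This is already absorbed into Lemma \ref{VHLW}: the hypotheses $1 < p < q < \infty$ and $0 < \theta_i < 1$ in cases (1)--(3) force $r > 2$ and give a strong-type HLS bound at those endpoints. A secondary issue is the Christ--Kiselev step, which requires $r > r'$, i.e.\ $r > 2$; this is automatic in cases (1)--(3), but case (4) of $E(\theta_1,\theta_2)$ admits $r = 2$, where the retarded estimate must be handled by the more delicate Keel--Tao bilinear endpoint argument rather than by Christ--Kiselev. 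Everything else reduces to routine bookkeeping on Besov norms using Proposition \ref{properties}.
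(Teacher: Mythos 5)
Your proposal is essentially the paper's own (only sketched) argument: the paper offers no proof beyond remarking that the proposition follows from the generalized Hardy--Littlewood--Sobolev inequality of Lemma \ref{VHLW} applied to the time convolution with $k$, combined with the $TT^*$ duality machinery of Lemmas \ref{equ}--\ref{L1LW}, which is exactly the scheme you lay out. The only point worth flagging is that the Christ--Kiselev step (and hence your concern about the case $r=2$) is superfluous for the four diagonal estimates stated here, since the retarded kernel $\chi_{\{0<\tau<t\}}\,k(t-\tau)$ is pointwise dominated by $k(t-\tau)$ and the retarded bounds therefore follow from the same convolution and duality estimates; Christ--Kiselev only becomes relevant for the off-diagonal retarded estimates appearing later in Theorems \ref{Intermediate-wave} and \ref{Intermediate}.
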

Note that the endpoint case $\theta_i=1$ also holds true and can be seen from Keel-Tao \cite{KT}.
 
\subsection{Wave equation}
First, we consider the Dunkl-wave equation as
\begin{equation}\label{Wave-equ}
	\begin{cases}
	\partial_t^2u-\Delta_\kappa u=g,\\
	u|_{t=0}=u_0 ,  \\
	\partial_tu|_{t=0}=u_1.
	\end{cases}
	\end{equation}
By Duhamel's principle, the solution is formally given by
\begin{equation*}
	u(t)=\frac{d{\mathcal W}_t}{dt}u_0+{\mathcal W}_tu_1-\int_0^t {\mathcal W}_{t-\tau}g(\tau,\cdot)\,d\tau,
	\end{equation*}
where
	\begin{equation*}
	\mathcal{W}_t=\frac{\sin(t\sqrt{-\Delta_\kappa})}{\sqrt{-\Delta_\kappa}},
	\quad  \frac{d \mathcal{W}_t}{dt}=\cos(t\sqrt{-\Delta_\kappa}).
	\end{equation*}
This reduces to the semigroup $W_t=e^{it\sqrt{-\Delta_\kappa}}$, which corresponds to the case   $\phi(r)=r$. Thus   $\phi$   satisfies (C1) and (C2) with $m_1=m_2=1$.  Using  Theorem \ref{ResultTime}, we obtain the  following decay estimate for the wave operator.
\begin{proposition}
 Assume $2\leq p\leq\infty$, $1\leq q\leq\infty$, $\delta=\frac{1}{2}-\frac{1}{p}$ and $2s\leq -(N+1)\delta$, then we have
\begin{equation*}
    \|W_tf\|_{B_{p,q}^{s,\kappa}(\mathbb{R}^n)}\lesssim k(t)\|f\|_{B_{p',q}^{-s,\kappa}(\mathbb{R}^n)}, \quad k(t)=\begin{cases}
	|t|^{-2\max (s+N\delta,0)}, \, &|t|\leq1,\\
	|t|^{-(N-1)\delta}, \,&|t|\geq1.
	\end{cases}
\end{equation*}
\end{proposition}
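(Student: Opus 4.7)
The strategy is to apply Theorem \ref{ResultTime} with $\phi(r)=r$ (so $m_1=m_2=1$), combine the resulting $L^1_\kappa\to L^\infty_\kappa$ decay with the unitarity of $W_t$ on $L^2_\kappa$, interpolate via Riesz--Thorin, and optimise the free parameter $\theta$ separately in the two time regimes. Since $\phi''\equiv 0$, only (C1) and (C2) apply, confining us to $0\le\theta\le(N-1)/2$.

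For the high-frequency part ($j\ge 0$), Theorem \ref{ResultTime}(A) gives $\|W_t\tilde{\Delta}_j u\|_{L^\infty_\kappa}\lesssim|t|^{-\theta}2^{j(N-\theta)}\|\tilde{\Delta}_j u\|_{L^1_\kappa}$, and Plancherel gives $\|W_t\tilde{\Delta}_j u\|_{L^2_\kappa}\le\|\tilde{\Delta}_j u\|_{L^2_\kappa}$. Riesz--Thorin with $\delta=\frac{1}{2}-\frac{1}{p}$ yields
\begin{equation*}
\|W_t\tilde{\Delta}_j u\|_{L^p_\kappa}\lesssim|t|^{-2\theta\delta}\,2^{2j(N-\theta)\delta}\,\|\tilde{\Delta}_j u\|_{L^{p'}_\kappa}.
\end{equation*}
To dominate the $B^{s,\kappa}_{p,q}$-sum by $\|u\|_{B^{-s,\kappa}_{p',q}}$ one needs $2^{j(2s+2(N-\theta)\delta)}\lesssim 1$ for every $j\ge 0$, equivalently $\theta\ge N+s/\delta$. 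The hypothesis $2s\le-(N+1)\delta$ rewrites as $N+s/\delta\le(N-1)/2$, which is exactly the assertion that the admissible window for $\theta$ is non-empty.

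The choice of $\theta$ pins down $k(t)$: for $|t|\ge 1$ we pick the largest admissible value $\theta=(N-1)/2$ (which minimises the time factor), producing decay $|t|^{-(N-1)\delta}$; for $|t|\le 1$ we pick the smallest admissible value $\theta=\max(N+s/\delta,0)$, yielding $|t|^{-2\max(s+N\delta,0)}$ (and the trivial $\theta=0$ already suffices when $s+N\delta\le 0$). The low-frequency piece is handled analogously: write $S_0 W_t u=\sum_{j\le 0}W_t\tilde{\Delta}_j S_0 u$, apply Theorem \ref{ResultTime}(B) with $m_2=1$, interpolate, and note that since $N-\theta>0$ the dyadic sum over $j\le 0$ converges by Lemma \ref{Sum} and produces the same $k(t)$ multiplied by $\|S_0 u\|_{L^{p'}_\kappa}\le\|u\|_{B^{-s,\kappa}_{p',q}}$. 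Assembling the two pieces via the definition of $\|\cdot\|_{B^{s,\kappa}_{p,q}}$ completes the proof.

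The main bookkeeping obstacle is reconciling the ceiling $\theta\le(N-1)/2$ (forced by the absence of (C3) since $\phi''\equiv 0$) with the floor $\theta\ge N+s/\delta$ forced by the high-frequency Besov summation; compatibility of these two constraints is precisely the hypothesis $2s\le-(N+1)\delta$, and this optimisation is what dictates the two distinct rates $|t|^{-2\max(s+N\delta,0)}$ for small times and $|t|^{-(N-1)\delta}$ for large times.
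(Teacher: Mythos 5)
Your proposal is correct and follows essentially the same route as the paper: the $L^1_\kappa\to L^\infty_\kappa$ bounds of Theorem \ref{ResultTime} for $\phi(r)=r$ combined with unitarity on $L^2_\kappa$, Riesz--Thorin interpolation, and the choice $\theta=(N-1)/2$ for $|t|\geq1$ versus $\theta=\max(N+s/\delta,0)$ for $|t|\leq1$, with the hypothesis $2s\leq-(N+1)\delta$ guaranteeing the admissible window for $\theta$ is non-empty. The only cosmetic difference is that you re-derive the low-frequency bound by summing $W_t\tilde{\Delta}_jS_0u$ over $j\leq0$, whereas the paper invokes the already-summed estimate \eqref{res-sum1} of Theorem \ref{ResultTime}(C) directly.
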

\begin{proof}
First, we prove the case $|t|\geq 1$. It follows from \eqref{res-sum1} of Theorem \ref{ResultTime} by taking $\theta=\frac{N-1}{2}$ and Plancherel's identity that
\begin{align*}
 \|W_t S_0f\|_{L_\kappa^\infty (\mathbb{R}^n)}&\lesssim |t|^{-\frac{N-1}{2}} \|S_0f\|_{L_\kappa^1(\mathbb{R}^n)},\\
 \|W_t S_0f\|_{L_\kappa^2 (\mathbb{R}^n)}&\leq \|S_0f\|_{L_\kappa^2(\mathbb{R}^n)}.
	\end{align*}
 From \eqref{res3-2} of Theorem \ref{ResultTime} by taking $\theta=\frac{N-1}{2}$ and Plancherel's identity,  for $j\geq 0$, we have
 \begin{align*}
\left\|W_t\tilde{\Delta}_jf\right\|_{L_\kappa^\infty(\mathbb{R}^n)} &\lesssim |t|^{-\frac{N-1}{2}}2^{j\frac{N+1}{2}} \|\tilde{\Delta}_jf\|_{L_\kappa^1(\mathbb{R}^n)},\\
\left\|W_t\tilde{\Delta}_jf\right\|_{L_\kappa^2(\mathbb{R}^n)} &\leq \|\tilde{\Delta}_jf\|_{L_\kappa^2(\mathbb{R}^n)}.
	\end{align*}
Applying the Riesz-Thorin interpolation theorem, for any $j\geq 0$, we have
\begin{align*}
	\|W_tS_0f\|_{L_\kappa^p (\mathbb{R}^n)}&\lesssim |t|^{-(N-1)\delta} \|S_0f\|_{L_\kappa^{p'}(\mathbb{R}^n)},\\
\|W_t\tilde{\Delta}_jf\|_{L_\kappa^p (\mathbb{R}^n)}&\lesssim |t|^{-(N-1)\delta}2^{j(N+1)\delta}\|\tilde{\Delta}_jf\|_{L_\kappa^{p'}(\mathbb{R}^n)}\leq |t|^{-(N-1)\delta}2^{-2js}\|\tilde{\Delta}_jf\|_{L_\kappa^{p'}(\mathbb{R}^n)},
	\end{align*}
 where in the last inequality we used the fact that $2s\leq -(N+1)\delta$ and $\delta=\frac{1}{2}-\frac{1}{p}$. Therefore
\begin{equation}\label{wave1}
    \|W_tf\|_{B_{p,q}^{s,\kappa}(\mathbb{R}^n)}\lesssim |t|^{-(N-1)\delta}\|f\|_{B_{p',q}^{-s,\kappa}(\mathbb{R}^n)},\,|t|\geq1.
\end{equation}
Next, we consider the case $|t|\leq 1$. Putting $\theta\geq 0$ in  \eqref{res-sum1} of Theorem \ref{ResultTime} and applying  Plancherel's identity and Riesz-Thorin interpolation theorem, we get
\begin{equation*}
   \|W_tS_0f\|_{L_\kappa^p (\mathbb{R}^n)}\lesssim \|S_0f\|_{L_\kappa^{p'}(\mathbb{R}^n)}.
\end{equation*}
Again for $j\geq0$,   from \eqref{res3-2} of Theorem \ref{ResultTime} with Plancherel's identity and Riesz-Thorin interpolation theorem, we obtain
\begin{equation*}
\|W_t\tilde{\Delta}_jf\|_{L_\kappa^p (\mathbb{R}^n)}\lesssim |t|^{-2\theta\delta}2^{2j(N-\theta)\delta}\|\tilde{\Delta}_jf\|_{L_\kappa^{p'}(\mathbb{R}^n)},
\end{equation*}   for $0\leq \theta\leq \frac{N-1}{2}$.
 Therefore 
\begin{equation*}
2^{js}\|W_t\tilde{\Delta}_jf\|_{L_\kappa^p (\mathbb{R}^n)}\lesssim |t|^{-2\theta\delta}2^{2j(s+(N-\theta)\delta)}2^{-js}\|\tilde{\Delta}_jf\|_{L_\kappa^{p'}(\mathbb{R}^n)}.
\end{equation*}
If $-N\delta \leq s\leq -\frac{(N+1)\delta}{2}$, then we can choose $0\leq \theta\leq \frac{N-1}{2}$ such that $s+N\delta=\theta \delta$. Furthermore, for $s<-N\delta$, we will choose $\theta=0$. Combing all the scenarios, we have
\begin{equation*}
2^{js}\|W_t\tilde{\Delta}_jf\|_{L_\kappa^p (\mathbb{R}^n)}\lesssim |t|^{-2\max (s+N\delta,0)}2^{-js}\|\tilde{\Delta}_jf\|_{L_\kappa^{p'}(\mathbb{R}^n)}
\end{equation*}
and hence
\begin{equation}\label{wave2}
\|W_tf\|_{B_{p,q}^{s,\kappa}(\mathbb{R}^n)}\lesssim |t|^{-2\max (s+N\delta,0)}\|f\|_{B_{p',q}^{-s,\kappa}(\mathbb{R}^n)}, 
\end{equation}
for $ |t|\leq 1$. 
Thus from \eqref{wave1} and \eqref{wave2}, we finally have
\begin{equation*}
\|W_tf\|_{B_{p,q}^{s,\kappa}(\mathbb{R}^n)}\lesssim k(t)\|f\|_{B_{p',q}^{-s,\kappa}(\mathbb{R}^n)}, \quad k(t)=\begin{cases}
	|t|^{-2\max (s+N\delta,0)}, \, &|t|\leq1,\\
	|t|^{-(N-1)\delta}, \,&|t|\geq1.
	\end{cases}
\end{equation*}
 \end{proof}
Now using   Proposition \ref{General-Strichartz}, we obtain the following estimates for the Dunkl-wave operator.
\begin{theorem}\label{Intermediate-wave} Let $N>1$. For $i=1,2$, let $p_i, r_i\in[2,\infty]$ and $s_i\in\mathbb{R}$ such that
	\begin{equation*}
	\begin{aligned}
	&(i)\,\frac{2}{r_i}+\frac{N-1}{p_i} \leq \frac{N-1}{2},   \\
	&(ii)\,s_i\leq -\frac{N+1}{2}\left(\frac{1}{2}-\frac{1}{p_i}\right),
	\end{aligned}
	\end{equation*}
 except for $(N, p_i, r_i)=(3,\infty,2)$. Then for any $T>0$, the Dunkl-wave operator $W_t$ satisfies 
	\begin{equation*}
	\|W_tu_0\|_{L^{r_1}(\mathbb{R}, B_{p_1,2}^{s_1,\kappa}(\mathbb{R}^n))}\lesssim\|u_0\|_{L_\kappa^2(\mathbb{R}^n)},
	\end{equation*}
	\begin{equation*}
	\left\|\int_0^t W_{t-\tau}g(\tau,\cdot)\,d\tau\right\|_{L^{r_1}\left([-T,T], B_{p_1,2}^{s_1,\kappa}(\mathbb{R}^n)\right)}\lesssim \|g\|_{L^{r'_2}\left([-T,T], B_{p'_2,2}^{-s_2,\kappa}(\mathbb{R}^n)\right)}.
	\end{equation*}
\end{theorem}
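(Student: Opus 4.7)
The plan is to deduce both estimates directly from the abstract Strichartz machinery in Proposition \ref{General-Strichartz}, using the dispersive estimate for $W_t$ just established. Setting $\delta_i = \frac{1}{2} - \frac{1}{p_i}$, that dispersive estimate shows $W_t$ satisfies the general assumption \eqref{General-Assumption} with
\[
\theta_1 = 2\max(s_i + N\delta_i, 0), \qquad \theta_2 = (N-1)\delta_i,
\]
and the hypothesis $2s_i \leq -(N+1)\delta_i$ is precisely what guarantees $\theta_1 \leq \theta_2$, so Proposition \ref{General-Strichartz} becomes applicable.

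The next step is to verify that the admissibility condition $r_i \in E(\theta_1, \theta_2)$ coincides with the hypothesis $\frac{2}{r_i} + \frac{N-1}{p_i} \leq \frac{N-1}{2}$ together with $r_i \geq 2$. Rewriting the admissibility bound as $\frac{2}{r_i} \leq \theta_2$, the strict case $\frac{2}{r_i} < \theta_2$ matches case (4) in the definition of $E$, while the endpoint $\frac{2}{r_i} = \theta_2$ matches case (3) when $\theta_2 < 1$. The remaining endpoint $\theta_2 = 1$ forces $r_i = 2$ and is the genuine Keel-Tao endpoint noted immediately after Proposition \ref{General-Strichartz}; it is valid except in the classical failure configuration $(N, p_i, r_i) = (3, \infty, 2)$, which accounts for the exclusion in the statement.

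Once admissibility is established, the homogeneous estimate $\|W_t u_0\|_{L^{r_1}(\mathbb{R}, B_{p_1,2}^{s_1,\kappa})} \lesssim \|u_0\|_{L_\kappa^2}$ is exactly the first conclusion of Proposition \ref{General-Strichartz}. For the retarded inhomogeneous estimate with asymmetric indices, I would apply Lemma \ref{DuilatyXX} to the two triplets extracted from the $TT^*$ formulation of the homogeneous bounds corresponding to $(p_1, r_1, s_1)$ and $(p_2, r_2, s_2)$. This produces the bound $\|A_1^* A_2 g\|_{X_1^*} \lesssim \|g\|_{X_2}$, where $A_i^* A_j$ is the untruncated retarded operator; the truncated operator $\int_0^t W_{t-\tau} g(\tau,\cdot)\,d\tau$ is then recovered via the Christ-Kiselev lemma for off-endpoint exponents, with the endpoint $\frac{2}{r_i} = 1$ handled directly by the Keel-Tao endpoint result.

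The main technical burden is the case analysis in the second step: checking that $r_i \in E(\theta_1, \theta_2)$ covers all sub-regimes of $s_i$ (distinguishing $s_i < -N\delta_i$, in which case $\theta_1 = 0$, from $-N\delta_i \leq s_i \leq -\frac{N+1}{2}\delta_i$, in which case $\theta_1 > 0$) and of $\theta_2$ (distinguishing $\theta_2 < 1$ from $\theta_2 = 1$). Once this bookkeeping is complete, both Strichartz inequalities follow mechanically from the abstract proposition together with Lemma \ref{DuilatyXX}.
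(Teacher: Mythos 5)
Your overall strategy is the same as the paper's: the paper derives Theorem \ref{Intermediate-wave} in one line by feeding the dispersive estimate for $W_t$ into Proposition \ref{General-Strichartz}, and you do the same, adding the (correct and in fact necessary) observation that the asymmetric inhomogeneous estimate requires Lemma \ref{DuilatyXX} together with a Christ--Kiselev type argument to pass from the untruncated operator to $\int_0^t$.

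There is, however, a genuine gap in your verification that $r_i\in E(\theta_1,\theta_2)$. You assert that the strict case $\frac{2}{r_i}<\theta_2$ ``matches case (4)'', but case (4) is the two-sided condition $\theta_1<\frac{2}{r_i}<\theta_2$, and the lower inequality is not implied by the hypotheses. Take $s_i$ at the top of its allowed range, $s_i=-\frac{N+1}{2}\delta_i$ with $\delta_i=\frac12-\frac{1}{p_i}$; then $s_i+N\delta_i=\frac{N-1}{2}\delta_i\ge 0$, so $\theta_1=2(s_i+N\delta_i)=(N-1)\delta_i=\theta_2$, and for any non-sharp pair (strict inequality in (i), i.e.\ $\frac{2}{r_i}<(N-1)\delta_i$) one gets $\frac{2}{r_i}<\theta_1=\theta_2$, so $r_i$ satisfies none of the four cases defining $E(\theta_1,\theta_2)$. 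This is not mere bookkeeping: the constraint $\theta_1\le\frac{2}{r_i}$ is exactly what Lemma \ref{VHLW} needs to control the local singularity $|t-\tau|^{-\theta_1}$ of the $TT^*$ kernel as a convolution from $L^{r_i'}$ to $L^{r_i}$, and it genuinely fails in this regime. The standard repair --- prove the estimate first for the sharp exponent $\tilde p_i\le p_i$ with $\frac{2}{r_i}+\frac{N-1}{\tilde p_i}=\frac{N-1}{2}$ and then embed $B^{\tilde s_i,\kappa}_{\tilde p_i,2}\subseteq B^{s,\kappa}_{p_i,2}$ via Proposition \ref{properties}(6) --- costs $N\bigl(\frac{1}{\tilde p_i}-\frac{1}{p_i}\bigr)$ derivatives and only reaches $s_i\le\frac{1}{r_i}-N\delta_i$, which is strictly below $-\frac{N+1}{2}\delta_i$ for non-sharp pairs; at $r_i=p_i=\infty$ the stated range is actually false, since at $t=0$ it would force $\|u_0\|_{B^{-(N+1)/4,\kappa}_{\infty,2}}\lesssim\|u_0\|_{L_\kappa^2}$, contradicting the sharpness of Bernstein's inequality for $N>1$. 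So your case analysis cannot be completed as written in the full stated range of $(p_i,r_i,s_i)$; the same issue is hidden in the paper's one-line appeal to Proposition \ref{General-Strichartz}.
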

It immediately reduces the Strichartz inequality for the solution of the Dunkl-wave equation \eqref{Wave-equ}. 
\begin{corollary}\label{Besov-wave}
Under the same hypotheses as in Theorem \ref{Intermediate-wave}, the solution $u$ of  the Dunkl-wave equation \eqref{Wave-equ} satisfies the following estimate
	\begin{equation*}
	\|u\|_{L^{r_1}\left([-T,T], B_{p_1,2}^{s_1+1,\kappa}(\mathbb{R}^n)\right)} \lesssim \|u_0\|_{H^{1,\kappa}(\mathbb{R}^n)}+\|u_1\|_{L_\kappa^2(\mathbb{R}^n)}+\|g\|_{L^{r'_2}\left([-T,T], B_{p'_2,2}^{-s_2,\kappa}(\mathbb{R}^n)\right)}.
	\end{equation*}
\end{corollary}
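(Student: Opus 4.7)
The plan is to reduce the corollary to Theorem \ref{Intermediate-wave} by expressing the three pieces of the Duhamel formula
\[
u(t)=\tfrac{d\mathcal{W}_t}{dt}u_0+\mathcal{W}_tu_1-\int_0^t\mathcal{W}_{t-\tau}g(\tau,\cdot)\,d\tau
\]
in terms of the unitary group $W_t=e^{it\sqrt{-\Delta_\kappa}}$ and transferring one half-derivative from the spatial operator onto the Besov index. Specifically, since $\phi(\sqrt{-\Delta_\kappa})$ is realized as a Dunkl-Fourier multiplier, the operators $\cos(t\sqrt{-\Delta_\kappa})$, $\sin(t\sqrt{-\Delta_\kappa})$, $(-\Delta_\kappa)^{1/2}$ and the Littlewood--Paley projectors $S_0,\tilde\Delta_j$ all commute, and
\[
\cos(t\sqrt{-\Delta_\kappa})=\tfrac{1}{2}(W_t+W_{-t}),\qquad \sin(t\sqrt{-\Delta_\kappa})=\tfrac{1}{2i}(W_t-W_{-t}).
\]

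First I would handle the homogeneous data. Using item (5) of Proposition \ref{properties}, the norm $\|\cdot\|_{B_{p_1,2}^{s_1+1,\kappa}}$ is equivalent to $\|(-\Delta_\kappa)^{1/2}\cdot\|_{B_{p_1,2}^{s_1,\kappa}}$. Commuting $(-\Delta_\kappa)^{1/2}$ with $\cos(t\sqrt{-\Delta_\kappa})$ gives
\[
\bigl\|\tfrac{d\mathcal{W}_t}{dt}u_0\bigr\|_{B_{p_1,2}^{s_1+1,\kappa}}
\;\lesssim\;\bigl\|W_t(-\Delta_\kappa)^{1/2}u_0\bigr\|_{B_{p_1,2}^{s_1,\kappa}}+\bigl\|W_{-t}(-\Delta_\kappa)^{1/2}u_0\bigr\|_{B_{p_1,2}^{s_1,\kappa}},
\]
and Theorem \ref{Intermediate-wave} (applied separately at time $t$ and $-t$) followed by taking the $L^{r_1}$-norm in $t$ controls this by $\|(-\Delta_\kappa)^{1/2}u_0\|_{L^2_\kappa}\lesssim\|u_0\|_{H^{1,\kappa}}$. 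For the second term I write $\mathcal{W}_tu_1=\tfrac{1}{2i}(W_t-W_{-t})(-\Delta_\kappa)^{-1/2}u_1$, so that $(-\Delta_\kappa)^{1/2}\mathcal{W}_tu_1=\tfrac{1}{2i}(W_t-W_{-t})u_1$, and the same application of Theorem \ref{Intermediate-wave} bounds the $L^{r_1}_t B^{s_1+1,\kappa}_{p_1,2}$-norm by $\|u_1\|_{L^2_\kappa}$.

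For the Duhamel term I again use $(-\Delta_\kappa)^{1/2}\mathcal{W}_{t-\tau}=\sin((t-\tau)\sqrt{-\Delta_\kappa})=\tfrac{1}{2i}(W_{t-\tau}-W_{-(t-\tau)})$ and Proposition \ref{properties}(5) to obtain
\[
\Bigl\|\int_0^t\mathcal{W}_{t-\tau}g(\tau,\cdot)\,d\tau\Bigr\|_{L^{r_1}([-T,T],B_{p_1,2}^{s_1+1,\kappa})}
\;\lesssim\;\Bigl\|\int_0^tW_{\pm(t-\tau)}g(\tau,\cdot)\,d\tau\Bigr\|_{L^{r_1}([-T,T],B_{p_1,2}^{s_1,\kappa})}.
\]
The inhomogeneous estimate in Theorem \ref{Intermediate-wave} then yields the bound $\|g\|_{L^{r'_2}([-T,T],B_{p'_2,2}^{-s_2,\kappa})}$. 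Summing the three contributions produces the claimed inequality.

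The only non-routine point in the plan is verifying that the replacement $W_t\leadsto W_{-t}$ in the backward half of $\cos$ and $\sin$ is legitimate: this is immediate because Theorem \ref{Intermediate-wave} is symmetric in $t$ (the estimate $k(t)$ depends only on $|t|$, and the $L^{r_1}(\mathbb{R},\cdot)$ norm in the homogeneous part and the symmetric interval $[-T,T]$ in the inhomogeneous part are both invariant under time reversal). So I do not expect any genuine obstacle; the argument is essentially a mechanical application of functional calculus on the Dunkl transform side plus Theorem \ref{Intermediate-wave}.
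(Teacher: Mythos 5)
Your proposal is correct and coincides with the paper's (essentially unwritten) argument: the paper simply declares that Corollary \ref{Besov-wave} follows immediately from Theorem \ref{Intermediate-wave} via Duhamel's formula, and your write-up supplies exactly the intended details --- expressing $\cos(t\sqrt{-\Delta_\kappa})$ and $\sin(t\sqrt{-\Delta_\kappa})$ through $W_{\pm t}$, shifting one derivative with Proposition \ref{properties}(5), and invoking the time-reversal symmetry of the Strichartz norms. No gap to report.
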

Finally, by the embedding relation between the  Besov space and Lebesgue space in Proposition \ref{properties}, we have the following Strichartz estimates on Lebesgue spaces.
\begin{corollary}\label{WLebesgue} Let $N>1$. Suppose $r_i\geq 2$ and $2\leq p_i<\infty$ such that
\begin{equation*}
    \frac{1}{r_1}+\frac{N}{p_1}=\frac{N}{2}-1, \quad \frac{2}{r_1}+\frac{N-1}{p_1} \leq \frac{N-1}{2} \quad and \quad  \frac{1}{r_2}+\frac{N}{p_2}\leq \frac{N}{2},
\end{equation*}
except for $(N,p_1, r_1, p_2,r_2)=(3,\infty,2,3,2)$. If $u$ is the solution of the Dunkl-wave equation \eqref{Wave-equ}, for any $T>0$, we have the following estimate
	\begin{equation*}
	\|u\|_{L^{r_1}\left([-T,T],L_\kappa^{p_1}(\mathbb{R}^n)\right)} \lesssim \|u_0\|_{H^{1,\kappa}(\mathbb{R}^n)}+\|u_1\|_{L_\kappa^2(\mathbb{R}^n)}+\|g\|_{L^{r'_2}\left([-T,T],L_\kappa^{p_2'}(\mathbb{R}^n)\right)}.\\
	\end{equation*}
\end{corollary}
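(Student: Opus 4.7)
The plan is to deduce this Lebesgue-norm Strichartz estimate from the Besov-valued version in Corollary \ref{Besov-wave} by selecting suitable Besov exponents and invoking the embedding properties collected in Proposition \ref{properties}. I split the argument into the treatment of the left-hand side, the initial data, and the source term $g$.

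On the solution side, I would specialize Corollary \ref{Besov-wave} to $s_1 = -1$ and let $p_1$ be the target exponent, so that the corollary controls $\|u\|_{L^{r_1}([-T,T], B_{p_1,2}^{0,\kappa})}$. A direct algebraic check shows that the hypotheses $\frac{1}{r_1} + \frac{N}{p_1} = \frac{N}{2} - 1$ and $\frac{2}{r_1} + \frac{N-1}{p_1} \leq \frac{N-1}{2}$ combine to give $(N+1)\bigl(\frac{1}{2} - \frac{1}{p_1}\bigr) \leq 2$, which is precisely the regularity condition $-1 \leq -\frac{N+1}{2}\bigl(\frac{1}{2} - \frac{1}{p_1}\bigr)$ required to invoke Corollary \ref{Besov-wave}. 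Since $p_1 \in [2,\infty)$, Proposition \ref{properties}(7) yields the continuous inclusion $B_{p_1,2}^{0,\kappa} \hookrightarrow L_\kappa^{p_1}$, from which $\|u\|_{L^{r_1}(L_\kappa^{p_1})} \leq \|u\|_{L^{r_1}(B_{p_1,2}^{0,\kappa})}$. The initial data terms pass through directly because Corollary \ref{Besov-wave} already states them in the norms $H^{1,\kappa}$ and $L_\kappa^2$ (equivalently $B_{2,2}^{1,\kappa}$ and $B_{2,2}^{0,\kappa}$ by Proposition \ref{properties}(8) and the equivalence stated in Section \ref{HBS}).

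For the inhomogeneous term, I would choose Besov indices $(\tilde{r}_2, \tilde{p}_2, s_2)$ in a second application of Corollary \ref{Besov-wave} so that $L^{r_2'}([-T,T], L_\kappa^{p_2'})$ embeds continuously into $L^{\tilde{r}_2'}([-T,T], B_{\tilde{p}_2',2}^{-s_2,\kappa})$. The spatial component comes from composing the dual of Proposition \ref{properties}(7), namely $L_\kappa^{p_2'} \subseteq B_{p_2',2}^{0,\kappa}$ (valid since $p_2 \geq 2$), with the scale-invariant Besov-Sobolev inclusion of Proposition \ref{properties}(6), which determines $s_2 = N\bigl(\frac{1}{\tilde{p}_2} - \frac{1}{p_2}\bigr)$; the temporal factor is absorbed by Hölder's inequality on the bounded interval $[-T,T]$, for which one needs only $\tilde{r}_2 \geq r_2$. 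The hypothesis $\frac{1}{r_2} + \frac{N}{p_2} \leq \frac{N}{2}$ supplies precisely the scaling budget required to find a triple $(\tilde{r}_2, \tilde{p}_2, s_2)$ compatible with these constraints.

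The main obstacle will be simultaneously verifying the Besov Strichartz regularity condition $s_2 \leq -\frac{N+1}{2}\bigl(\frac{1}{2} - \frac{1}{\tilde{p}_2}\bigr)$, the wave admissibility $\frac{2}{\tilde{r}_2} + \frac{N-1}{\tilde{p}_2} \leq \frac{N-1}{2}$, and the embedding identity fixing $s_2$. A naive choice $\tilde{p}_2 = p_2$ is forced into $p_2 = 2$, so I expect the full range $p_2 > 2$ to require a Littlewood-Paley decomposition $g = S_0 g + \sum_{j\geq 0} \tilde{\Delta}_j g$ and the Bernstein inequalities \eqref{LPLP}, converting $L_\kappa^{p_2'}$-integrability into $L_\kappa^2$-integrability at each frequency scale and then summing with the aid of the $L^1(L^2) \to L^r(B^s_{p,2})$ estimate in Proposition \ref{General-Strichartz}. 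The excluded case $(N, p_1, r_1, p_2, r_2) = (3, \infty, 2, 3, 2)$ corresponds exactly to the Keel-Tao endpoint, where the $TT^*$ duality of Lemmas \ref{equ}--\ref{L1LW} degenerates, matching the classical Euclidean exclusion.
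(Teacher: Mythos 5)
Your treatment of the homogeneous part and of the left-hand side is correct and in fact more direct than the paper's: you apply Corollary \ref{Besov-wave} with the target pair $(r_1,p_1)$ itself and $s_1=-1$, checking (correctly) that the two hypotheses on $(r_1,p_1)$ combine to give $(N+1)\left(\tfrac12-\tfrac1{p_1}\right)\leq 2$, and then use $B^{0,\kappa}_{p_1,2}\subseteq L^{p_1}_\kappa$. The paper instead works with the sharp-admissible exponent $\tilde p_1$ defined by $\tfrac{2}{r_1}+\tfrac{N-1}{\tilde p_1}=\tfrac{N-1}{2}$, takes $s_1=-\tfrac{N+1}{2}\left(\tfrac12-\tfrac1{\tilde p_1}\right)$, and then invokes the scale-invariant Besov embedding $B^{s_1+1,\kappa}_{\tilde p_1,2}\subseteq L^{p_1}_\kappa$ of Proposition \ref{properties}; both routes are legitimate and yield the same conclusion for this half of the estimate.

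The genuine gap is in the inhomogeneous term, and you have in fact put your finger on the obstruction without resolving it. Any chain of the form $L^{p_2'}_\kappa\subseteq B^{0,\kappa}_{p_2',2}\subseteq B^{-s_2,\kappa}_{\tilde p_2',2}$ forces $-s_2=N\left(\tfrac1{p_2}-\tfrac1{\tilde p_2}\right)\leq 0$ (one can only lose regularity along the Sobolev line), whereas admissibility in Corollary \ref{Besov-wave} forces $-s_2\geq\tfrac{N+1}{2}\left(\tfrac12-\tfrac1{\tilde p_2}\right)\geq 0$; so the embedding route closes only when $\tilde p_2=p_2=2$, not merely for the ``naive choice'' $\tilde p_2=p_2$. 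Your proposed remedy does not repair this: Bernstein gives $\|\tilde\Delta_j g\|_{L^2_\kappa}\lesssim 2^{jN(\frac1{p_2'}-\frac12)}\|\tilde\Delta_j g\|_{L^{p_2'}_\kappa}$ with an exponentially \emph{growing} constant for $p_2>2$, and the $L^1(L^2_\kappa)\to L^{r_1}(B^{s,\kappa}_{p,2})$ bound of Proposition \ref{General-Strichartz} supplies no compensating negative power of $2^j$, so the sum over $j$ diverges and there is no square-function mechanism to reassemble $\|g\|_{L^{p_2'}_\kappa}$ from the pieces. To control $g$ in a pure Lebesgue norm with $p_2>2$ one must redistribute the derivative gained from $(-\Delta_\kappa)^{-1/2}$ in $\mathcal W_{t-\tau}$ onto the $g$-side (i.e.\ prove a variant of Corollary \ref{Besov-wave} with $B^{-s_2-\sigma,\kappa}_{\tilde p_2',2}$, $\sigma>0$, on the right), which is not available from the corollary as stated. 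For comparison, the paper at this step simply asserts $L^{p_2'}_\kappa\subseteq B^{-s_2,\kappa}_{\tilde p_2',2}$ from the relation $-s_2-\tfrac{N}{\tilde p_2}=-\tfrac{N}{p_2}$ with $-s_2\geq 0$; your sign analysis shows this is exactly the point that needs justification, so your diagnosis is sharper than the paper's treatment, but as a proof your proposal is incomplete here.
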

\begin{proof}
From Corollary \ref{Besov-wave}, it follows  that
\begin{equation*}
	\|u\|_{L^{r_1}\left([-T,T], B_{\tilde{p}_1,2}^{s_1+1,\kappa}(\mathbb{R}^n)\right)} \lesssim \|u_0\|_{H^{1,\kappa}(\mathbb{R}^n)}+\|u_1\|_{L_\kappa^2(\mathbb{R}^n)}+\|g\|_{L^{r'_2}\left([-T,T], B_{\tilde{p}'_2,2}^{-s_2,\kappa},(\mathbb{R}^n)\right)}
	\end{equation*}
 where $\tilde{p}_i, r_i\in[2,\infty]$ and
        \begin{equation*}
\frac{2}{r_1}+\frac{N-1}{\tilde{p}_1} = \frac{N-1}{2},\;\;\frac{2}{r_2}+\frac{N-1}{\tilde{p}_2} \leq \frac{N-1}{2}, ~~\text{ and }~~s_i=-\frac{N+1}{2}\left(\frac{1}{2}-\frac{1}{\tilde{p}_i}\right),\;i=1,2,
	\end{equation*}
except for $(N, \tilde{p}_i, {r}_i)=(3,\infty,2)$. If $p_i,r_i\in[2,\infty], i=1,2$ with the sole exception of the pair $(N,p_1, r_1, p_2,r_2)=(3,\infty,2,3,2)$ satisfies the condition 
\begin{equation*}
    \frac{1}{r_1}+\frac{N}{p_1}=\frac{N}{2}-1,\;\;\frac{2}{r_1}+\frac{N-1}{p_1} \leq \frac{N-1}{2}~~\text{ and }~~\frac{1}{r_2}+\frac{N}{p_2}\leq \frac{N}{2},
\end{equation*}
then we the following relations $$s_1+1\leq 0,\; \; s_1+1-\frac{N}{\tilde{p}_1}=0-\frac{N}{p_1},$$ and  $$-s_2\geq 0,\; \; -s_2-\frac{N}{\tilde{p}_2}=0-\frac{N}{p_2}.$$
Therefore, using the embedding relation between the Besov space and Lebesgue space in Proposition \ref{properties}, we obtain
$B_{\tilde{p}_1,2}^{s_1+1,\kappa}(\mathbb{R}^n)\subseteq L^{p_1}_\kappa(\mathbb{R}^n)$ and $L_\kappa^{p_2'}(\mathbb{R}^n)\subseteq B_{\tilde{p}'_2,2}^{-s_2,\kappa}(\mathbb{R}^n)$ and this  completes  our desired result.
\end{proof}

\subsection{Fractional Schr\"{o}dinger equation} Consider the fractional Dunkl-Schr\"{o}dinger equation ($0<\mu\leq 2$ and $\mu\neq 1$)
    \begin{equation}\label{FSchrEqu}
	\begin{cases}
	i\partial_tu+(-\Delta_\kappa)^\frac{\mu}{2} u=g,\\
	u|_{t=0}=u_0.
	\end{cases}
	\end{equation}
By Duhamel's principle, the solution is formally given by
	\begin{equation*}\label{solution1}
	u(t)=\mathcal{S}^\mu_tu_0-i\int_0^t\mathcal{S}^\mu_{t-\tau}g(\tau,\cdot)\,d\tau,
	\end{equation*}
where $\mathcal{S}^\mu_t=e^{it(-\Delta_\kappa)^\frac{\mu}{2}}$ and it corresponds to the case when $\phi(r)=r^\mu$. By a simple calculation, we see that 
	\begin{equation*}
	\phi'(r)=\mu r^{\mu-1}, \quad
	\phi''(r)=\mu(\mu-1)r^{\mu-2}.
	\end{equation*}
This shows that   $\phi$ satisfies (C1)-(C4) with $m_1=\alpha_1=m_2=\alpha_2=\mu$. Using  Theorem \ref{ResultTime}, we obtain the following decay estimate for $\mathcal{S}^\mu_t$.
\begin{proposition}
Let $0<\mu\leq 2$ and $\mu\neq 1$. Assume $2\leq p\leq\infty$, $1\leq q\leq\infty$, $\delta=\frac{1}{2}-\frac{1}{p}$ and $s\leq -N\left(1-\frac{\mu}{2}\right)\delta$, then we have
\begin{equation*}
    \|\mathcal{S}^\mu_tf\|_{B_{p,q}^{s,\kappa}(\mathbb{R}^n)}\lesssim k(t)\|f\|_{B_{p',q}^{-s,\kappa}(\mathbb{R}^n)}, \quad k(t)=\begin{cases}
	|t|^{-\frac{2}{\mu}\max (s+N\delta,0)}, \, &|t|\leq1,\\
	|t|^{-N\delta}, \,&|t|\geq1.
	\end{cases}
\end{equation*}
\end{proposition}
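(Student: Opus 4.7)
The plan is to adapt the wave-equation proof verbatim, replacing $\phi(r)=r$ by $\phi(r)=r^\mu$. Since $\phi^{(\alpha)}(r)=\mu(\mu-1)\cdots(\mu-\alpha+1)r^{\mu-\alpha}$, one checks immediately that $(C1)$--$(C4)$ all hold with $m_1=\alpha_1=m_2=\alpha_2=\mu$, so Theorem~\ref{ResultTime} applies. The strategy is to obtain two $L^\infty$ endpoint estimates from Theorem~\ref{ResultTime}, interpolate each with the trivial $L^2$-bound coming from Plancherel, and finally encode the result in the Besov norm using Littlewood--Paley.

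For $|t|\geq 1$, I would apply part~(A) estimate \eqref{res3-3} at the endpoint $\theta=1$. Since $\alpha_1=m_1=\mu$, the exponent on $2^j$ simplifies to $N-\mu N/2=N(1-\mu/2)$ and the decay becomes $|t|^{-N/2}$, yielding
\[
\|\mathcal{S}^\mu_t\tilde{\Delta}_j f\|_{L^\infty_\kappa}\lesssim |t|^{-N/2}2^{jN(1-\mu/2)}\|\tilde{\Delta}_j f\|_{L^1_\kappa}.
\]
For the low-frequency piece, part~(C) estimate \eqref{res-sum2} applies (with $\alpha_2=m_2=\mu$) and gives $(1+|t|)^{-\min(N/\mu,N/2)}=(1+|t|)^{-N/2}$ because $\mu\leq 2$. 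Interpolating both bounds with Plancherel via Riesz--Thorin produces the $L^{p'}\to L^p$ decay $|t|^{-N\delta}$ on each Littlewood--Paley block, with a spurious $2^{2jN(1-\mu/2)\delta}$ on the high-frequency blocks. Multiplying by $2^{js}$ and $2^{-js}$ on the two sides of the Besov norm, the spurious factor is exactly killed by the hypothesis $s\leq -N(1-\mu/2)\delta$, completing the $|t|\geq 1$ case.

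For $|t|\leq 1$, the low-frequency piece $S_0$ is handled trivially: from part~(C) at $|t|\leq 1$ one has an $L^{p'}\to L^p$ uniform bound. For high frequencies, I would use part~(A) with the parameter $\theta$ or its analogue from \eqref{res3-3} tuned so that after Riesz--Thorin the frequency weight matches the desired Besov scaling. Concretely, writing the combined family from \eqref{res3-2}--\eqref{res3-3} as $\|\mathcal{S}^\mu_t\tilde{\Delta}_j f\|_{L^\infty_\kappa}\lesssim |t|^{-\eta}2^{j(N-\mu\eta)}\|\tilde{\Delta}_j f\|_{L^1_\kappa}$ valid for all $\eta\in[0,N/2]$ (the two ranges $[0,(N-1)/2]$ from \eqref{res3-2} and $[(N-1)/2,N/2]$ from \eqref{res3-3} at $\alpha_1=m_1$ paste together seamlessly), interpolation gives $|t|^{-2\eta\delta}2^{2j(N-\mu\eta)\delta}$. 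Setting $\eta=(s+N\delta)/(\mu\delta)$ whenever $s+N\delta>0$ and $\eta=0$ otherwise produces the stated decay $|t|^{-\frac{2}{\mu}\max(s+N\delta,0)}$ and makes the $2^j$-power equal to $2^{-2js}$, which matches the Besov-norm scaling exactly.

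The main obstacle is verifying that the chosen $\eta$ lies in the admissible window $[0,N/2]$. The constraint $\eta\leq N/2$ amounts to $(s+N\delta)/(\mu\delta)\leq N/2$, i.e.\ $s\leq -N(1-\mu/2)\delta$, which is precisely the hypothesis on $s$; this is the only place where that hypothesis is used in the $|t|\leq 1$ regime, and it is also the condition ensuring the $|t|\geq 1$ bound closes. Once these two regimes are established, combining them yields the piecewise $k(t)$ in the statement.
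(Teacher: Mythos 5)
Your proposal is correct and follows essentially the same route as the paper: decompose into $S_0$ and the blocks $\tilde{\Delta}_j$, invoke \eqref{res-sum2} for the low frequencies and \eqref{res3-2}--\eqref{res3-3} (with $m_1=\alpha_1=\mu$) for the high frequencies, interpolate each $L^1_\kappa\to L^\infty_\kappa$ bound with the Plancherel bound via Riesz--Thorin, and choose the parameter ($\theta$ in the paper, your $\eta$) so that $s+N\delta=\mu\theta\delta$, with the hypothesis $s\leq -N(1-\mu/2)\delta$ guaranteeing $\theta\leq N/2$ exactly as you observe. Your explicit pasting of \eqref{res3-2} and \eqref{res3-3} into a single one-parameter family on $[0,N/2]$ is only a cosmetic repackaging of what the paper does implicitly.
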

\begin{proof}
First, we prove the case $|t|\geq 1$. From \eqref{res-sum2} of Theorem \ref{ResultTime} with $\theta=\frac{N}{2}$,   Plancherel's identity yields
\begin{align*}
 \|\mathcal{S}^\mu_t S_0f\|_{L_\kappa^\infty (\mathbb{R}^n)}&\lesssim |t|^{-\frac{N}{2}}\|S_0f\|_{L_\kappa^1(\mathbb{R}^n)},\\
 \|\mathcal{S}^\mu_tS_0f\|_{L_\kappa^2 (\mathbb{R}^n)}&\leq \|S_0f\|_{L_\kappa^2(\mathbb{R}^n)}.
	\end{align*}
 Again from     \eqref{res3-3} of Theorem \ref{ResultTime} by taking $\theta=1$,   for $j\geq 0$,   Plancherel's identity gives
 \begin{align*}
\left\|\mathcal{S}^\mu_t\tilde{\Delta}_jf\right\|_{L_\kappa^\infty(\mathbb{R}^n)} &\lesssim |t|^{-\frac{N}{2}}2^{jN(1-\frac{\mu}{2})}\|\tilde{\Delta}_jf\|_{L_\kappa^1(\mathbb{R}^n)},\\
\left\|\mathcal{S}^\mu_t\tilde{\Delta}_jf\right\|_{L_\kappa^2(\mathbb{R}^n)} &\leq \|\tilde{\Delta}_jf\|_{L_\kappa^2(\mathbb{R}^n)}.
	\end{align*}
Now applying the Riesz-Thorin interpolation theorem, for any $j\geq 0$, we obtain
\begin{align*}
	\|\mathcal{S}^\mu_tS_0f\|_{L_\kappa^p (\mathbb{R}^n)}&\lesssim |t|^{-N\delta}\|S_0f\|_{L_\kappa^{p'}(\mathbb{R}^n)},\\
\|\mathcal{S}^\mu_t\tilde{\Delta}_jf\|_{L_\kappa^p (\mathbb{R}^n)}&\lesssim |t|^{-N\delta}2^{2jN\left(1-\frac{\mu}{2}\right)\delta}\|\tilde{\Delta}_jf\|_{L_\kappa^{p'}(\mathbb{R}^n)}\leq |t|^{-N\delta}2^{-2js}\|\tilde{\Delta}_jf\|_{L_\kappa^{p'}(\mathbb{R}^n)}.
	\end{align*}
This implies that  
\begin{equation*}
\|\mathcal{S}^\mu_tf\|_{B_{p,q}^{s,\kappa}(\mathbb{R}^n)}\lesssim |t|^{-N\delta}\|f\|_{B_{p',q}^{-s,\kappa}(\mathbb{R}^n)},\,|t|\geq1.
\end{equation*}
Next, we discuss the case $|t|\leq 1$. For $\theta=0$ in   \eqref{res-sum2} of Theorem \ref{ResultTime},  applying  $\theta=0$, Plancherel's identity and Riesz-Thorin interpolation theorem, we get
\begin{equation*}
   \|\mathcal{S}^\mu_tS_0f\|_{L_\kappa^p (\mathbb{R}^n)}\lesssim \|S_0f\|_{L_\kappa^{p'}(\mathbb{R}^n)}.
\end{equation*}
For $j\geq0$, it follows from \eqref{res3-2} 
 and \eqref{res3-3} of Theorem \ref{ResultTime},  Plancherel's identity and Riesz-Thorin interpolation theorem that for $0\leq \theta\leq \frac{N}{2}$, we have 
\begin{equation*}
\|\mathcal{S}^\mu_t\tilde{\Delta}_jf\|_{L_\kappa^p (\mathbb{R}^n)}\lesssim |t|^{-2\theta\delta}2^{2j(N-\theta\mu)\delta}\|\tilde{\Delta}_jf\|_{L_\kappa^{p'}(\mathbb{R}^n)}.
\end{equation*}
This implies that \begin{equation*}
2^{js}\|\mathcal{S}^\mu_t\tilde{\Delta}_jf\|_{L_\kappa^p (\mathbb{R}^n)}\lesssim |t|^{-2\theta\delta}2^{2j(s+(N-\theta\mu)\delta)}2^{-js}\|\tilde{\Delta}_jf\|_{L_\kappa^{p'}(\mathbb{R}^n)}.
\end{equation*}
If $-N\delta \leq s\leq -N\left(1-\frac{\mu}{2}\right)\delta$, then we can choose $0\leq \theta\leq \frac{N}{2}$ such that $s+N\delta=\mu\theta \delta$. If $s<-N\delta$, we take $\theta=0$. Therefore 
\begin{equation*}
2^{js}\|\mathcal{S}^\mu_t\tilde{\Delta}_jf\|_{L_\kappa^p (\mathbb{R}^n)}\lesssim |t|^{-\frac{2}{\mu}\max (s+N\delta,0)}2^{-js}\|\tilde{\Delta}_jf\|_{L_\kappa^{p'}(\mathbb{R}^n)}
\end{equation*}
and 
\begin{equation*}
\|\mathcal{S}^\mu_tf\|_{B_{p,q}^{s,\kappa}(\mathbb{R}^n)}\lesssim |t|^{-\frac{2}{\mu}\max (s+N\delta,0)}\|f\|_{B_{p',q}^{-s,\kappa}(\mathbb{R}^n)},\,|t|\leq 1,
\end{equation*}
which completes the proof of the proposition.
\end{proof}
By Proposition \ref{General-Strichartz}, we obtain the following estimates for the fractional Schr\"{o}dinger operator.
\begin{theorem}\label{Intermediate} Let $0<\mu\leq 2$ and $\mu\neq 1$. For $i=1,2$, let $p_i, r_i\in[2,\infty]$ and $s_i\in\mathbb{R}$ such that
	\begin{equation*}
	\begin{aligned}
	&(i)\,\frac{2}{r_i}+\frac{N}{p_i} \leq \frac{N}{2},   \\
	&(ii)\,s_i\leq -N\left(1-\frac{\mu}{2}\right)\left(\frac{1}{2}-\frac{1}{p_i}\right),
	\end{aligned}
	\end{equation*}
 except for $(N, p_i, r_i)=(2,\infty,2)$. Then for any $T>0$, the fractional Schr\"{o}dinger operator $\mathcal{S}^\mu_t$ satisfies the estimates
	\begin{equation}\label{Strichartz1}
	\|\mathcal{S}^\mu_tu_0\|_{L^{r_1}(\mathbb{R}, B_{p_1,2}^{s_1,\kappa}(\mathbb{R}^n))}\lesssim\|u_0\|_{L_\kappa^2(\mathbb{R}^n)},
	\end{equation}
	\begin{equation}\label{Strichartz2}
	\left\|\int_0^t\mathcal{S}^\mu_{t-\tau}g(\tau,\cdot)\,d\tau\right\|_{L^{r_1}\left([-T,T], B_{p_1,2}^{s_1,\kappa}(\mathbb{R}^n)\right)}\lesssim \|g\|_{L^{r'_2}\left([-T,T], B_{p'_2,2}^{-s_2,\kappa}(\mathbb{R}^n)\right)}.
	\end{equation}
\end{theorem}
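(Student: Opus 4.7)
The plan is to derive \eqref{Strichartz1}--\eqref{Strichartz2} by feeding the dispersive estimate for $\mathcal{S}_t^\mu$ established in the preceding proposition into the abstract Strichartz machinery packaged in Proposition \ref{General-Strichartz}, which itself rests on the $TT^*$ duality lemmas \ref{equ}--\ref{L1LW} together with the Hardy--Littlewood--Sobolev inequality of Lemma \ref{VHLW}.

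Setting $\delta_i=\tfrac{1}{2}-\tfrac{1}{p_i}$, I first observe that hypothesis (ii), $s_i\leq -N(1-\tfrac{\mu}{2})\delta_i$, is exactly the regularity threshold required by the preceding dispersive proposition. It therefore provides \eqref{General-Assumption} with exponents $\theta_1^{(i)}=\tfrac{2}{\mu}\max(s_i+N\delta_i,0)$ for $|t|\leq 1$ and $\theta_2^{(i)}=N\delta_i$ for $|t|\geq 1$. A short computation using (ii) together with $0<\mu\leq 2$ shows $\theta_1^{(i)}\leq \theta_2^{(i)}$, so the pair $(\theta_1^{(i)},\theta_2^{(i)})$ fits the framework of the definition of $E(\theta_1^{(i)},\theta_2^{(i)})$. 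Condition (i) then rewrites as $\tfrac{2}{r_i}\leq N\delta_i=\theta_2^{(i)}$, which places $r_i$ in case (iv) of $E(\theta_1^{(i)},\theta_2^{(i)})$, possibly at its upper endpoint.

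With admissibility established, the homogeneous bound \eqref{Strichartz1} is the direct output of Proposition \ref{General-Strichartz} applied with $i=1$. For the inhomogeneous bound \eqref{Strichartz2} with mixed admissible pairs $(r_1,p_1,s_1)\neq (r_2,p_2,s_2)$, I would invoke the bilinear duality Lemma \ref{DuilatyXX}: letting $A_i$ denote the map $u_0\mapsto \mathcal{S}_t^\mu u_0$ from $L_\kappa^2(\mathbb{R}^n)$ into $L^{r_i}([-T,T],B^{s_i,\kappa}_{p_i,2}(\mathbb{R}^n))$, the single-pair estimate from Proposition \ref{General-Strichartz} furnishes the hypotheses of Lemma \ref{DuilatyXX}, and combining $A_1^*A_2$ with the Duhamel formula yields the mixed-exponent retarded estimate.

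The principal obstacle is the endpoint case $\tfrac{2}{r_i}=\theta_2^{(i)}$, i.e.\ equality in (i). In the strict interior $\tfrac{2}{r_i}<\theta_2^{(i)}$, the bound on $A_i^*A_i$ reduces via \eqref{General-Assumption} to time convolution against the kernel $k_i(t)$, whose two distinct decay rates at $|t|\leq 1$ and $|t|\geq 1$ are both strictly weak-type and are handled by Lemma \ref{VHLW} (cases (2)--(4)). At the endpoint one has $k_i(t)\sim |t|^{-1}$ globally, the strong-type convolution bound fails, and one must instead invoke the Keel--Tao atomic decomposition argument from \cite{KT}. This argument degenerates precisely at $(N,p_i,r_i)=(2,\infty,2)$ (where $\theta_2^{(i)}=1$ and the Lorentz-improvement step in Keel--Tao breaks down), thereby accounting for the stated exception.
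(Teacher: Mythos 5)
Your route is exactly the paper's: the paper proves this theorem in one line by feeding the dispersive estimate of the preceding proposition into Proposition \ref{General-Strichartz}, and your proposal fleshes out that same application (identifying $\theta_1^{(i)}=\tfrac{2}{\mu}\max(s_i+N\delta_i,0)$, $\theta_2^{(i)}=N\delta_i$, handling mixed pairs via Lemma \ref{DuilatyXX}, and attributing the excluded triple $(2,\infty,2)$ to the Keel--Tao endpoint, which the paper only gestures at in the remark after Proposition \ref{General-Strichartz}). Those parts are correct and faithful to the paper's toolkit.

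There is, however, a genuine gap in your admissibility check. Every case of the definition of $E(\theta_1,\theta_2)$, and likewise Lemma \ref{VHLW} applied to the time convolution in the $TT^*$ step, forces the two-sided constraint $\theta_1\le \tfrac{2}{r}\le\theta_2$; the lower inequality is what controls the local-in-time singularity $|t|^{-\theta_1}$ of the kernel $k$. You verify only the upper inequality $\tfrac{2}{r_i}\le\theta_2^{(i)}$ from (i). The lower inequality $\theta_1^{(i)}\le\tfrac{2}{r_i}$, i.e. $s_i\le\tfrac{\mu}{r_i}-N\delta_i$, is \emph{not} implied by (i)--(ii): hypothesis (ii) only gives $s_i\le\tfrac{\mu}{2}N\delta_i-N\delta_i$, and $\tfrac{\mu}{r_i}\le\tfrac{\mu}{2}N\delta_i$ with equality precisely when (i) is an equality. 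Hence for strictly non-sharp pairs, $\tfrac{2}{r_i}+\tfrac{N}{p_i}<\tfrac{N}{2}$, and $s_i$ near the threshold in (ii), your assertion that $r_i$ lands in case (iv) of $E(\theta_1^{(i)},\theta_2^{(i)})$ is false and the convolution step breaks down. A concrete instance: $\mu=2$, $\kappa=0$, $N=4$, $p_1=4$, $r_1=\infty$, $s_1=0$ satisfies (i)--(ii) and is not the excluded triple, yet \eqref{Strichartz1} would then assert that $e^{it\Delta}$ maps $L^2$ into $B^{0}_{4,2}\subseteq L^4$ uniformly in $t$, which is false. This defect is inherited from the paper's own statement (the natural hypothesis is $s_i\le\tfrac{\mu}{r_i}-N(\tfrac12-\tfrac{1}{p_i})$, which coincides with (ii) exactly on sharp-admissible pairs), but since your write-up explicitly derives case-(iv) membership from (i) alone, the step as written does not go through: you should either impose the missing condition, or prove the estimate at $s_i^*=\tfrac{\mu}{r_i}-N\delta_i$ and descend to $s_i\le s_i^*$ via the Besov inclusion in Proposition \ref{properties}.
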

The above estimate immediately reduces the following Strichartz inequality for the solution of the fractional Schr\"{o}dinger equation. 
\begin{corollary}
Under the same hypotheses as in Theorem \ref{Intermediate}, the solution $u$ of
the fractional Schr\"{o}dinger equation \eqref{FSchrEqu} satisfies the following estimate
	\begin{equation*}
	\|u\|_{L^{r_1}\left([-T,T], B_{p_1,2}^{s_1,\kappa}(\mathbb{R}^n)\right)} \lesssim \|u_0\|_{L_\kappa^2(\mathbb{R}^n)}+\|g\|_{L^{r'_2}\left([-T,T], B_{p'_2,2}^{-s_2,\kappa}(\mathbb{R}^n)\right)}.
	\end{equation*}
\end{corollary}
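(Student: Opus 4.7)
The plan is to derive the stated estimate as a direct consequence of Theorem \ref{Intermediate} via Duhamel's formula. Writing the solution of \eqref{FSchrEqu} as
\begin{equation*}
u(t)=\mathcal{S}^\mu_tu_0-i\int_0^t\mathcal{S}^\mu_{t-\tau}g(\tau,\cdot)\,d\tau,
\end{equation*}
I would apply the triangle inequality in the norm $\|\cdot\|_{L^{r_1}([-T,T],B^{s_1,\kappa}_{p_1,2}(\mathbb{R}^n))}$ to split the control into a homogeneous and an inhomogeneous contribution.

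For the homogeneous piece, I would observe that restricting to the time interval $[-T,T]$ only decreases the $L^{r_1}_t$ norm, so
\begin{equation*}
\|\mathcal{S}^\mu_tu_0\|_{L^{r_1}([-T,T],B^{s_1,\kappa}_{p_1,2}(\mathbb{R}^n))}\leq \|\mathcal{S}^\mu_tu_0\|_{L^{r_1}(\mathbb{R},B^{s_1,\kappa}_{p_1,2}(\mathbb{R}^n))}\lesssim \|u_0\|_{L_\kappa^2(\mathbb{R}^n)},
\end{equation*}
by \eqref{Strichartz1} of Theorem \ref{Intermediate}, whose hypotheses are exactly the standing assumptions on $(p_1,r_1,s_1)$.

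For the inhomogeneous piece, I would invoke \eqref{Strichartz2} from Theorem \ref{Intermediate} directly, whose admissibility hypotheses are guaranteed by the assumptions on the pair $(p_2,r_2,s_2)$, to obtain
\begin{equation*}
\left\|\int_0^t\mathcal{S}^\mu_{t-\tau}g(\tau,\cdot)\,d\tau\right\|_{L^{r_1}\left([-T,T],B^{s_1,\kappa}_{p_1,2}(\mathbb{R}^n)\right)}\lesssim \|g\|_{L^{r'_2}\left([-T,T],B^{-s_2,\kappa}_{p'_2,2}(\mathbb{R}^n)\right)}.
\end{equation*}
Summing the two contributions gives the claimed inequality. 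Since every step is a direct appeal to Theorem \ref{Intermediate} together with monotonicity of the $L^{r_1}_t$ norm under time restriction, there is essentially no obstacle; the only thing to be careful about is making sure that both index pairs satisfy the admissibility conditions (i), (ii) of Theorem \ref{Intermediate} independently, which is exactly what the hypothesis \emph{``under the same hypotheses as in Theorem \ref{Intermediate}''} guarantees for $i=1,2$, including the exclusion of the forbidden endpoint $(N,p_i,r_i)=(2,\infty,2)$.
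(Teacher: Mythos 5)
Your proposal is correct and is exactly the argument the paper intends: the corollary is stated as an immediate consequence of Theorem \ref{Intermediate}, obtained by writing the solution via Duhamel's formula, restricting the homogeneous term's $L^{r_1}_t$ norm from $\mathbb{R}$ to $[-T,T]$ to apply \eqref{Strichartz1}, and applying \eqref{Strichartz2} to the inhomogeneous term. No further comment is needed.
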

Finally, by the embedding relation between the  Besov space and Lebesgue space in Proposition \ref{properties}, we have the Strichartz inequalities on Lebesgue spaces.
\begin{corollary}\label{SLebesgue} Let $0<\mu<2$ and $\mu\neq 1$. Suppose $p_i,r_i\geq 2$ such that
\begin{equation*}
    \frac{1}{p_i}\geq \frac{1}{2}-\frac{2}{(2-\mu)N}\quad and \quad \frac{\mu}{r_i}+\frac{N}{p_i} \leq \frac{N}{2}-1.
\end{equation*}
If $u$ is the solution of the fractional Schr\"{o}dinger equation \eqref{FSchrEqu}, for any $T>0$, we have the following estimate
	\begin{equation*}
	\|u\|_{L^{r_1}\left([-T,T],L_\kappa^{p_1}(\mathbb{R}^n)\right)} \lesssim \|u_0\|_{H^{1,\kappa}(\mathbb{R}^n)}+\|g\|_{L^{r'_2}\left([-T,T],L_\kappa^{p_2'}(\mathbb{R}^n)\right)}.
	\end{equation*}
\end{corollary}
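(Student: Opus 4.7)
The plan is to follow the same scheme as in the proof of Corollary \ref{WLebesgue}: derive a Besov-space Strichartz estimate from the Besov-valued corollary of Theorem \ref{Intermediate} (stated immediately above this corollary) with carefully chosen auxiliary indices $\tilde p_i$ and regularity parameters $s_i$, and then convert to Lebesgue norms via the embedding relations in Proposition \ref{properties}.

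Concretely, for given $(p_i, r_i)$ satisfying the corollary's hypotheses, I would introduce $\tilde p_i \in [2,\infty]$ and $s_i \in \mathbb{R}$ saturating the two conditions of Theorem \ref{Intermediate}:
\begin{equation*}
\frac{2}{r_1}+\frac{N}{\tilde p_1}=\frac{N}{2},\qquad \frac{2}{r_2}+\frac{N}{\tilde p_2}\leq\frac{N}{2},\qquad s_i=-N\Bigl(1-\frac{\mu}{2}\Bigr)\Bigl(\frac{1}{2}-\frac{1}{\tilde p_i}\Bigr),
\end{equation*}
(avoiding the excluded endpoint $(N,\tilde p_i,r_i)=(2,\infty,2)$). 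Since $u_0\in H^{1,\kappa}$, applying the Besov corollary to $(1-\Delta_\kappa)^{1/2}u$, and using that $(1-\Delta_\kappa)^{1/2}$ commutes with $\mathcal S^{\mu}_t$ together with Proposition \ref{properties}(5) to shift Besov regularity by one, yields
\begin{equation*}
\|u\|_{L^{r_1}([-T,T],\,B^{s_1+1,\kappa}_{\tilde p_1,2}(\mathbb{R}^n))}\lesssim \|u_0\|_{H^{1,\kappa}(\mathbb{R}^n)}+\|g\|_{L^{r_2'}([-T,T],\,B^{-s_2,\kappa}_{\tilde p_2',2}(\mathbb{R}^n))}.
\end{equation*}

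The second step is to translate the hypotheses $\frac{\mu}{r_i}+\frac{N}{p_i}\leq\frac{N}{2}-1$ and $\frac{1}{p_i}\geq \frac{1}{2}-\frac{2}{(2-\mu)N}$ into the Sobolev-scaling relations
\begin{equation*}
s_1+1\geq 0,\quad s_1+1-\frac{N}{\tilde p_1}=-\frac{N}{p_1},\quad -s_2\geq 0,\quad -s_2-\frac{N}{\tilde p_2}=-\frac{N}{p_2},
\end{equation*}
exactly as in the proof of Corollary \ref{WLebesgue}. These, together with $\tilde p_i\leq p_i$, feed into Proposition \ref{properties}(6)--(7) and yield the continuous inclusions
\begin{equation*}
B^{s_1+1,\kappa}_{\tilde p_1,2}(\mathbb{R}^n)\hookrightarrow L^{p_1}_\kappa(\mathbb{R}^n),\qquad L^{p_2'}_\kappa(\mathbb{R}^n)\hookrightarrow B^{-s_2,\kappa}_{\tilde p_2',2}(\mathbb{R}^n),
\end{equation*}
the second by Besov duality (Proposition \ref{properties}(3)) applied to the analogous embedding for $(p_2,\tilde p_2,-s_2)$. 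Inserting both embeddings in the shifted Besov Strichartz gives the desired Lebesgue Strichartz bound.

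The main obstacle, as in the wave case, is the bookkeeping: verifying that over the full parameter range allowed by the corollary, the choice of $\tilde p_i$ and $s_i$ simultaneously satisfies the admissibility and regularity conditions of Theorem \ref{Intermediate} and the Sobolev-scaling conditions of Proposition \ref{properties}. In particular, the condition $\frac{1}{p_i}\geq \frac{1}{2}-\frac{2}{(2-\mu)N}$ corresponds exactly to $s_i+1\geq 0$ at the chosen boundary, which is precisely what is needed to embed $B^{s_1+1,\kappa}_{\tilde p_1,2}$ into $L^{p_1}_\kappa$; one must also verify that the excluded endpoint $(N,\tilde p_i,r_i)=(2,\infty,2)$ never arises inside the admissible $(p_i,r_i)$-range considered.
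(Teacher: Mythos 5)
The paper does not actually write out a proof of this corollary (it is asserted to follow ``by the embedding relation''), so the only available template is the proof of Corollary \ref{WLebesgue}, which is exactly what you follow. For the homogeneous term $\mathcal{S}^\mu_t u_0$ your scheme does work, with one adjustment: since the second hypothesis is an inequality, you should let the scaling relation $s_1+1-\frac{N}{\tilde p_1}=-\frac{N}{p_1}$ together with the saturated regularity condition \emph{determine} $\tilde p_1$ (namely $\frac12-\frac{1}{\tilde p_1}=\frac{2}{N\mu}\bigl(\frac{N}{2}-1-\frac{N}{p_1}\bigr)$), rather than saturating admissibility; one then checks that $\frac{\mu}{r_1}+\frac{N}{p_1}\le\frac{N}{2}-1$ is precisely the admissibility condition (i) of Theorem \ref{Intermediate} and that $\frac{1}{p_1}\ge\frac12-\frac{2}{(2-\mu)N}$ is precisely $s_1+1\ge 0$ and $\tilde p_1\le p_1$. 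That part of your bookkeeping is sound.

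The genuine gap is in the inhomogeneous term. Your displayed intermediate estimate is not what the shift argument yields: if you apply the Besov corollary to $v=(1-\Delta_\kappa)^{1/2}u$, the forcing becomes $(1-\Delta_\kappa)^{1/2}g$, so the right-hand side is $\|g\|_{L^{r_2'}B^{1-s_2,\kappa}_{\tilde p_2',2}}$, not $\|g\|_{L^{r_2'}B^{-s_2,\kappa}_{\tilde p_2',2}}$, and the embedding $L^{p_2'}_\kappa\hookrightarrow B^{1-s_2,\kappa}_{\tilde p_2',2}$ you would then need is false, since $1-s_2\ge 1>0$ and an $L^p$ function has no positive Besov regularity. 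This is the point where the wave template breaks: for the wave equation the extra derivative on the left of Corollary \ref{Besov-wave} is supplied by the smoothing of $\mathcal{W}_t=\sin(t\sqrt{-\Delta_\kappa})/\sqrt{-\Delta_\kappa}$, whereas $\mathcal{S}^\mu_t=e^{it(-\Delta_\kappa)^{\mu/2}}$ gains nothing. A budget count shows no reallocation rescues the argument: applying \eqref{Strichartz2} to $(1-\Delta_\kappa)^{\beta/2}u$ gives $B^{\sigma_1+\beta,\kappa}_{\tilde p_1,2}$ on the left and $B^{\beta-\sigma_2,\kappa}_{\tilde p_2',2}$ on the right, and the two embeddings into/from Lebesgue spaces force $\sigma_1+\beta\ge 0$ and $\beta-\sigma_2\le 0$; since condition (ii) of Theorem \ref{Intermediate} forces $\sigma_i\le -N(1-\tfrac{\mu}{2})(\tfrac12-\tfrac{1}{\tilde p_i})\le 0$, this is only possible when $\sigma_1=\sigma_2=0$, i.e.\ $\tilde p_1=\tilde p_2=2$. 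So the estimate for $\int_0^t\mathcal{S}^\mu_{t-\tau}g\,d\tau$ with $p_2>2$ cannot be obtained from Theorem \ref{Intermediate} by embeddings at all, and you would need a genuinely different argument (or a reformulated statement with a Sobolev norm of $g$ on the right) to close this case.
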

In particular, when $\mu=2$, fractional Schr\"{o}dinger operator reduces to the Dunkl-Schr\"odinger equation
\begin{equation}\label{D-SchrEqu}
	\begin{cases}
	i\partial_tu+\Delta_\kappa u=g,\\
	u|_{t=0}=u_0.
	\end{cases}
	\end{equation}
By the embedding relation between the  Besov space and Lebesgue space in Proposition \ref{properties} along with  Theorem \ref{Intermediate}, we obtain the Strichartz inequalities on Lebesgue space for the Dunkl-Schr\"odinger equation considered in \cite{Mejjaoli2009,Mejjaoli2013,shyam}.

\begin{corollary}\label{S-SLebesgue} Suppose for $j=1, 2$,  $r_i,p_i\geq 2$, $(N, p_i, r_i)\neq(2,\infty,2)$ and
\begin{equation*}
    \frac{2}{r_i}+\frac{N}{p_i} \leq \frac{N}{2}.
\end{equation*}
If $u$ is the solution of the Dunkl-Schr\"{o}dinger equation \eqref{D-SchrEqu}, then for any $T>0$, we have the following estimate
	\begin{equation*}
	\|u\|_{L^{r_1}\left([-T,T],L_\kappa^{p_1}(\mathbb{R}^n)\right)} \lesssim \|u_0\|_{L_\kappa^2(\mathbb{R}^n)}+\|g\|_{L^{r'_2}\left([-T,T],L_\kappa^{p_2'}(\mathbb{R}^n)\right)}.
	\end{equation*}
\end{corollary}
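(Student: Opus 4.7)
The plan is to specialize Theorem \ref{Intermediate} to $\mu = 2$, so that $\mathcal{S}^\mu_t$ becomes the Dunkl-Schr\"odinger propagator governing \eqref{D-SchrEqu}, and then to pass from the Besov-space Strichartz estimates to the desired Lebesgue-space estimates via the embeddings listed in Proposition \ref{properties}. With $\mu = 2$, condition (ii) of Theorem \ref{Intermediate} collapses to $s_i \leq 0$, so I choose $s_1 = s_2 = 0$. Given the admissibility $\frac{2}{r_i} + \frac{N}{p_i} \leq \frac{N}{2}$ from the hypothesis and the excluded endpoint $(N,p_i,r_i)\neq(2,\infty,2)$, the two estimates \eqref{Strichartz1}--\eqref{Strichartz2} combined with the Duhamel representation
$$u(t) = \mathcal{S}^2_t u_0 - i\int_0^t \mathcal{S}^2_{t-\tau} g(\tau,\cdot)\,d\tau$$
yield
$$\|u\|_{L^{r_1}([-T,T],\,B_{p_1,2}^{0,\kappa}(\mathbb{R}^n))} \lesssim \|u_0\|_{L^2_\kappa(\mathbb{R}^n)} + \|g\|_{L^{r_2'}([-T,T],\,B_{p_2',2}^{0,\kappa}(\mathbb{R}^n))}.$$

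It then remains to swap the Besov norms for Lebesgue norms, once on each side and in opposite directions. For the left-hand side, Proposition \ref{properties}(7) gives $B_{p_1,2}^{0,\kappa}(\mathbb{R}^n) \hookrightarrow L^{p_1}_\kappa(\mathbb{R}^n)$ whenever $p_1 \geq 2$, so $\|u\|_{L^{r_1}([-T,T],L^{p_1}_\kappa)} \lesssim \|u\|_{L^{r_1}([-T,T],B^{0,\kappa}_{p_1,2})}$. For the right-hand side, I need the reverse inclusion $L^{p_2'}_\kappa(\mathbb{R}^n) \hookrightarrow B_{p_2',2}^{0,\kappa}(\mathbb{R}^n)$. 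Since $p_2 \geq 2$ implies $p_2' \leq 2$, this is obtained by dualizing $B_{p_2,2}^{0,\kappa} \hookrightarrow L^{p_2}_\kappa$ via the duality identifications $(B^{0,\kappa}_{p_2,2})^* = B^{0,\kappa}_{p_2',2}$ from Proposition \ref{properties}(3) and $(L^{p_2}_\kappa)^* = L^{p_2'}_\kappa$. Stringing the two inclusions together with the Besov Strichartz estimate above gives the claimed bound.

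The argument is essentially mechanical, and no delicate harmonic-analytic step is required beyond what is already encoded in Theorem \ref{Intermediate}. Unlike the wave case treated in Corollary \ref{WLebesgue}, one does not need a scale-invariant Besov lift (with $\tilde{p}_i \neq p_i$) here, because the Schr\"odinger admissibility $\frac{2}{r_i} + \frac{N}{p_i} \leq \frac{N}{2}$ is already written in the exponent that matches the target $L^{p_i}$-spaces. The only mild point to keep track of is that Proposition \ref{properties}(7) must be used in one direction for the solution and, via duality, in the reverse direction for the forcing term; this is precisely the symmetry built into the $TT^\ast$ framework underlying Proposition \ref{General-Strichartz}, so the passage to Lebesgue spaces is effectively free.
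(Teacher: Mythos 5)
Your proposal is correct and follows exactly the route the paper intends: the paper states Corollary \ref{S-SLebesgue} without a separate written proof, remarking only that it follows from Theorem \ref{Intermediate} with $\mu=2$ (so that condition (ii) reduces to $s_i\le 0$, allowing $s_1=s_2=0$) together with the Besov--Lebesgue embeddings of Proposition \ref{properties}, which is precisely your argument with the duality step for the forcing term spelled out. Your observation that no scale-shifted exponents $\tilde{p}_i$ are needed here, in contrast to Corollary \ref{WLebesgue}, is also consistent with the paper's treatment.
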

%%%%%%%%%%%%%%%%%%%%%%%%%%%%%%%%%%%%%%%%%%%%%%%%%%%%%%%%%%%%%%%%%%%%%%%%%%%%%%%%%%%%
\subsection{Fourth-order Schr\"{o}dinger equation}
Consider the fourth-order Schr\"{o}dinger equation
	\begin{equation}\label{SSchrEqu}
	\begin{cases}
	i\partial_tu+\Delta_\kappa^2u-\Delta_\kappa u=g,\\
	u|_{t=0}=u_0.
	\end{cases}
	\end{equation}
By Duhamel's principle, the solution is formally given by
	\begin{equation*}\label{solution3}
	u(t)=\mathcal{U}_tu_0-i\int_0^t\mathcal{U}_{t-\tau}g(\tau,\cdot)\,d\tau,
	\end{equation*}
where $\mathcal{U}_t=e^{it(\Delta_\kappa^2-\Delta_\kappa)}$ and it corresponds to the case when $\phi(r)=r^4+r^2$. By a simple calculation, we see that 
	\begin{equation*}
	\phi'(r)=4r^3+2r, \quad
	\phi''(r)=12r^2+2.
	\end{equation*}
Thus  $\phi$ satisfies (C1)–(C4) with $m_1=\alpha_1=4$, $m_2=\alpha_2=2$.  Using  Theorem \ref{ResultTime}, we obtain the following decay estimate for  $\mathcal{U}_t$. 
\begin{proposition}
Assume $2\leq p\leq\infty$, $1\leq q\leq\infty$, $\delta=\frac{1}{2}-\frac{1}{p}$ and $s\leq N\delta$.\\
Then we have
\begin{equation*}
    \|\mathcal{U}_tf\|_{B_{p,q}^{s,\kappa}(\mathbb{R}^n)}\lesssim k(t)\|f\|_{B_{p',q}^{-s,\kappa}(\mathbb{R}^n)}, \quad k(t)=\begin{cases}
	|t|^{-\frac{1}{2}\max (s+N\delta,0)} , \, &|t|\leq1,\\
	|t|^{-N\delta}, \,&|t|\geq1.
	\end{cases}
\end{equation*}
\end{proposition}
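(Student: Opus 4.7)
My plan is to mirror exactly the proofs of the wave and fractional Schrödinger propositions, since $\phi(r)=r^2+r^4$ satisfies (C1)--(C4) with $m_1=\alpha_1=4$ and $m_2=\alpha_2=2$, so both the $S_0$ and $\tilde{\Delta}_j$ bounds from Theorem~\ref{ResultTime} apply in their sharpest form. I will split into frequency (low vs.\ high via Littlewood--Paley) and time (small vs.\ large) regimes, interpolate each $L^1\!\to\!L^\infty$ bound with Plancherel, then assemble the Besov norm.

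For the high-frequency pieces ($j\geq 0$), I first observe that with $m_1=\alpha_1=4$ the two bounds \eqref{res3-2} and \eqref{res3-3} collapse to a single family of estimates: setting $\tilde\theta=(N-1+\theta)/2$ in \eqref{res3-3} one checks that the frequency exponent $N-\tfrac{m_1(N-1+\theta)}{2}-\tfrac{\theta(\alpha_1-m_1)}{2}$ equals $N-4\tilde\theta$, matching \eqref{res3-2}. Together the two estimates yield
\begin{equation*}
\|\mathcal{U}_t\tilde{\Delta}_j f\|_{L^\infty_\kappa(\mathbb{R}^n)}\lesssim |t|^{-\tilde\theta}2^{j(N-4\tilde\theta)}\|\tilde{\Delta}_j f\|_{L^1_\kappa(\mathbb{R}^n)},\qquad 0\leq\tilde\theta\leq N/2.
\end{equation*}
For the low-frequency piece, \eqref{res-sum2} applied with $\alpha_2=m_2=2$ and $\theta=N/2$ gives $\|\mathcal{U}_t S_0 f\|_{L^\infty_\kappa}\lesssim (1+|t|)^{-N/2}\|S_0f\|_{L^1_\kappa}$. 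All these bounds are paired with Plancherel's identity $\|\mathcal{U}_t g\|_{L^2_\kappa}=\|g\|_{L^2_\kappa}$ and interpolated by Riesz--Thorin to produce $L^{p'}_\kappa\!\to\!L^p_\kappa$ estimates with $\delta=\tfrac12-\tfrac1p$.

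For the large-time regime $|t|\geq 1$, I take $\tilde\theta=N/2$ in the high-frequency bound to get $\|\mathcal{U}_t\tilde{\Delta}_j f\|_{L^p_\kappa}\lesssim |t|^{-N\delta}2^{-2jN\delta}\|\tilde{\Delta}_j f\|_{L^{p'}_\kappa}$; multiplying by $2^{js}$, the condition $s\leq N\delta$ yields the factor $2^{2j(s-N\delta)}\leq 1$ uniformly in $j\geq 0$, so $2^{js}\|\mathcal{U}_t\tilde{\Delta}_j f\|_{L^p_\kappa}\lesssim |t|^{-N\delta}\cdot 2^{-js}\|\tilde{\Delta}_j f\|_{L^{p'}_\kappa}$. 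The low-frequency piece is handled analogously: $\|\mathcal{U}_t S_0f\|_{L^p_\kappa}\lesssim |t|^{-N\delta}\|S_0 f\|_{L^{p'}_\kappa}$. Summing the $\ell^q$-norms in $j$ gives the claimed $|t|^{-N\delta}$ bound on $B^{s,\kappa}_{p,q}$.

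For the small-time regime $|t|\leq 1$, I keep $\tilde\theta$ free and interpolate to obtain $\|\mathcal{U}_t\tilde\Delta_j f\|_{L^p_\kappa}\lesssim |t|^{-2\tilde\theta\delta}2^{2j(N-4\tilde\theta)\delta}\|\tilde\Delta_j f\|_{L^{p'}_\kappa}$. Multiplying by $2^{js}$, I need $s+(N-4\tilde\theta)\delta\leq 0$, and I choose the smallest admissible value $\tilde\theta=\max\bigl(0,(s+N\delta)/(4\delta)\bigr)$, which lies in $[0,N/2]$ precisely when $s\leq N\delta$. This produces the prefactor $|t|^{-\tfrac12\max(s+N\delta,0)}$. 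The low-frequency piece at $|t|\leq 1$ is handled by taking $\theta=0$ in \eqref{res-sum2}, giving $\|\mathcal{U}_t S_0 f\|_{L^p_\kappa}\lesssim \|S_0f\|_{L^{p'}_\kappa}$, which is dominated by the same prefactor since $k(t)\geq 1$ for $|t|\leq 1$. Combining the $S_0$ and $\tilde{\Delta}_j$ contributions in the Besov norm finishes the proof.

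The main obstacle is purely bookkeeping: one must verify that the constraint $s\leq N\delta$ is exactly what makes the interpolation parameter $\tilde\theta=(s+N\delta)/(4\delta)$ fit inside the admissible range $[0,N/2]$ of Theorem~\ref{ResultTime}, and that the collapse of \eqref{res3-2} and \eqref{res3-3} into a single formula genuinely uses $\alpha_1=m_1$. Beyond that, all work is delegated to Theorem~\ref{ResultTime} and the standard Riesz--Thorin interpolation, as in the wave and fractional Schrödinger propositions.
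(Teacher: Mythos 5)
Your proposal is correct and follows essentially the same route as the paper's proof: merging \eqref{res3-2} and \eqref{res3-3} (using $m_1=\alpha_1=4$) into the single family $\|\mathcal{U}_t\tilde{\Delta}_j f\|_{L_\kappa^\infty}\lesssim |t|^{-\theta}2^{j(N-4\theta)}\|\tilde{\Delta}_jf\|_{L_\kappa^1}$ for $0\leq\theta\leq N/2$, interpolating with Plancherel via Riesz--Thorin, taking $\theta=N/2$ for $|t|\geq 1$, and choosing $\theta$ so that $s+N\delta=4\theta\delta$ (or $\theta=0$) for $|t|\leq 1$. The paper does exactly this, so no further comparison is needed.
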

\begin{proof}
First, we prove the case $|t|\geq 1$. For $\theta=\frac{N}{2}$ in  \eqref{res-sum2} of Theorem \ref{ResultTime},  Plancherel's identity gives 
\begin{align*}
 \|\mathcal{U}_tS_0f\|_{L_\kappa^\infty (\mathbb{R}^n)}&\lesssim |t|^{-\frac{N}{2}}\|S_0f\|_{L_\kappa^1 (\mathbb{R}^n)},\\
\|\mathcal{U}_tS_0f\|_{L_\kappa^2 (\mathbb{R}^n)}&\leq\|S_0f\|_{L_\kappa^2 (\mathbb{R}^n)}.
	\end{align*}
Applying  Riesz-Thorin interpolation theorem, we have
\begin{equation}\label{U-Sf}
  \|\mathcal{U}_tS_0f\|_{L_\kappa^p (\mathbb{R}^n)}\lesssim |t|^{-N\delta}\|S_0f\|_{L_\kappa^{p'} (\mathbb{R}^n)}.
\end{equation}
Again for $j\geq 0$, it follows from \eqref{res3-2} and \eqref{res3-3} of Theorem \ref{ResultTime} that
\begin{equation}\label{full-U}
	\|\mathcal{U}_t \tilde{\Delta}_j f|_{L_\kappa^\infty(\mathbb{R}^n)} \lesssim |t|^{-\theta}2^{j\left(N-4\theta\right)}\|\tilde{\Delta}_j f\|_{L_\kappa^1(\mathbb{R}^n)},\,0 \leq \theta \leq \frac{N}{2}.
	\end{equation}
Taking $\theta=\frac{N}{2}$ in \eqref{full-U}, combined with Plancherel's identity and Riesz-Thorin interpolation theorem, it indicates
\begin{equation}\label{j-Uf}
   \|\mathcal{U}_t\tilde{\Delta}_jf\|_{L_\kappa^p (\mathbb{R}^n)}\lesssim |t|^{-N\delta}2^{-2jN\delta}\|\tilde{\Delta}_jf\|_{L_\kappa^{p'} (\mathbb{R}^n)}\leq |t|^{-N\delta}2^{-2js}\|\tilde{\Delta}_jf\|_{L_\kappa^{p'}},
\end{equation}
for any $j\geq0$. From \eqref{U-Sf} and \eqref{j-Uf}, we obtain our desired result of the proposition in this case.
 
Next, we discuss the case $|t|\leq 1$. For $\theta=0$ in  \eqref{res-sum1} of Theorem \ref{ResultTime},  Plancherel's identity and Riesz-Thorin interpolation theorem gives 
\begin{equation*}
    \|\mathcal{U}_tS_0f\|_{L^p_\kappa(\mathbb{R}^n)}\lesssim \|S_0f\|_{L^{p'}_\kappa(\mathbb{R}^n)}.
\end{equation*}
Again for $j\geq0$,   from \eqref{full-U}, applying Plancherel's identity and Riesz-Thorin interpolation theorem,  for $0\leq \theta\leq \frac{N}{2}$, we obtain
\begin{equation*}
\|\mathcal{U}_t\tilde{\Delta}_jf\|_{L^p_\kappa(\mathbb{R}^n)}\lesssim |t|^{-2\theta\delta}2^{2j(N-4\theta)\delta}\|\tilde{\Delta}_jf\|_{L^{p'}_\kappa(\mathbb{R}^n)},
\end{equation*}
which reduces to
\begin{equation*}
2^{js}\|\mathcal{U}_t\tilde{\Delta}_jf\|_{L^p_\kappa(\mathbb{R}^n)}\lesssim |t|^{-2\theta\delta}2^{2j\left(s+(N-4\theta)\delta\right)}2^{-js}\|\tilde{\Delta}_jf\|_{L^{p'}_\kappa(\mathbb{R}^n)}.
\end{equation*}
If $-N\delta \leq s \leq N\delta$, then we can choose $0\leq \theta\leq \frac{N}{2}$ such that $s+N\delta=4\theta \delta$. If $s<-N\delta$, we take $\theta=0$. Therefore 
\begin{equation*}
2^{js}\|\mathcal{U}_t\tilde{\Delta}_jf\|_{L^p_\kappa(\mathbb{R}^n)}\lesssim |t|^{-\frac{1}{2}\max (s+N\delta,0)}2^{-js}\|\tilde{\Delta}_jf\|_{L^{p'}_\kappa(\mathbb{R}^n)},
\end{equation*}
and hence
\begin{equation*}
\|\mathcal{U}_tf\|_{B_{p,q}^{s,\kappa}(\mathbb{R}^n)}\lesssim |t|^{-\frac{1}{2}\max (s+N\delta,0)}\|f\|_{B_{p,q}^{-s,\kappa}(\mathbb{R}^n)},\,|t|\leq 1.
\end{equation*}
This completes the proof of the proposition.
\end{proof}
\subsection{Klein-Gordon equation}
Moreover, we consider the Klein-Gordon equation
	\begin{equation}\label{K-GEqu}
	\begin{cases}
	\partial_t^2u-\Delta_\kappa u+u=g,\\
	u|_{t=0}=u_0 ,  \\
	\partial_tu|_{t=0}=u_1.
	\end{cases}
	\end{equation}
By Duhamel's principle, the solution is formally given by
\begin{equation*}\label{solution3}
	u(t)=\frac{d\mathcal{K}_t}{dt}u_0+\mathcal{K}_tu_1-\int_0^t\mathcal{K}_{t-\tau}g(\tau,\cdot)\,d\tau,
	\end{equation*}
where
	\begin{equation*}
	\mathcal{K}_t=\frac{\sin(t\sqrt{I-\Delta_\kappa})}{\sqrt{I-\Delta_\kappa}},
	\quad  \frac{d\mathcal{K}_t}{dt}=\cos(t\sqrt{I-\Delta_\kappa}).
	\end{equation*}
So we naturally introduce the operator $K_t=e^{it\sqrt{I-\Delta_\kappa}}$, which corresponds to the case when $\phi(r)=\sqrt{1+r^2}$.
By a simple calculation have
	\begin{equation*}
	\phi'(r)=r(1+r^2)^{-\frac{1}{2}}, \;
	\phi''(r)=(1+r^2)^{-\frac{3}{2}}.
	\end{equation*}
This shows that    $\phi$ satisfies (C1)–(C4) with $m_1=1,\alpha_1=-1, $ and $m_2=\alpha_2=2$. Using  Theorem \ref{ResultTime}, we obtain the following decay estimate for $K_t$.
\begin{proposition}\label{Klein-decay}
Assume $2\leq p\leq\infty$, $1\leq q\leq\infty$, and $\delta=\frac{1}{2}-\frac{1}{p}$.
\begin{enumerate}
    \item Let $0\leq\theta\leq 1$ and $2s=-\left(N+1+\theta\right)\delta$, then we have
\begin{equation*}
    \|K_tf\|_{B_{p,q}^{s,\kappa}(\mathbb{R}^n)}\lesssim |t|^{-(N-1+\theta)\delta}\|f\|_{B_{p',q}^{-s,\kappa}(\mathbb{R}^n)}.
\end{equation*}
\item Let $0\leq\theta\leq N-1$ and $2s=-\left(N+1+\theta\right)\delta$, then we have
\begin{equation*}
    \|K_tf\|_{B_{p,q}^{s,\kappa}(\mathbb{R}^n)}\lesssim |t|^{-(N-1-\theta)\delta}\|f\|_{B_{p',q}^{-s,\kappa}(\mathbb{R}^n)}.
\end{equation*}
\item In particular, for $0\leq\theta\leq 1$ and $2s\leq  -\left(N+1+\theta\right)\delta$,   we have
\begin{equation*}
    \|K_tf\|_{B_{p,q}^{s,\kappa}(\mathbb{R}^n)}\lesssim k(t)\|f\|_{B_{p',q}^{-s,\kappa}(\mathbb{R}^n)}, \quad k(t)=\begin{cases}
	|t|^{-2\max (s+N\delta,0)} , \, &|t|\leq1,\\
	|t|^{-(N-1+\theta)\delta}, \,&|t|\geq1.
	\end{cases}
\end{equation*}
\end{enumerate}
 
\end{proposition}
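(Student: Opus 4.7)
My plan is to mirror the strategy already used in this section for the wave and fractional Schrödinger cases: for each dyadic block apply Theorem \ref{ResultTime} to obtain an $L^1_\kappa \to L^\infty_\kappa$ decay, combine with the Plancherel identity $\|K_t\tilde\Delta_j f\|_{L^2_\kappa}\leq\|\tilde\Delta_j f\|_{L^2_\kappa}$, and interpolate via Riesz--Thorin with parameter $2\delta=1-2/p$ to land in the $L^{p'}_\kappa\to L^p_\kappa$ scale. After pairing the dyadic gain/loss factor $2^{j\cdot(\cdot)}$ with the Besov weight $2^{js}$, the sum (or supremum) in $j\ge 0$ closes the argument together with an $S_0$-piece coming from part (C) of Theorem \ref{ResultTime}. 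Since $\phi(r)=\sqrt{1+r^2}$ gives $m_1=1$, $\alpha_1=-1$, $m_2=\alpha_2=2$, these are the exponents plugged into Theorem \ref{ResultTime} throughout.

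For part (1), the high-frequency inequality \eqref{res3-3} specializes, after computing $N-\tfrac{m_1(N-1+\theta)}{2}-\tfrac{\theta(\alpha_1-m_1)}{2}=\tfrac{N+1+\theta}{2}$, to $\|K_t\tilde\Delta_j u_0\|_{L^\infty_\kappa}\lesssim|t|^{-(N-1+\theta)/2}2^{j(N+1+\theta)/2}\|u_0\|_{L^1_\kappa}$. Interpolating with Plancherel yields $\|K_t\tilde\Delta_j f\|_{L^p_\kappa}\lesssim|t|^{-(N-1+\theta)\delta}2^{j(N+1+\theta)\delta}\|\tilde\Delta_j f\|_{L^{p'}_\kappa}$, and the choice $2s=-(N+1+\theta)\delta$ turns the $j$-factor into $2^{-2js}$, so the Besov weight matches on both sides. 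The $S_0$ block is controlled by \eqref{res-sum2} with $\theta=N/2$ (legitimate since $\alpha_2=m_2=2$), giving after interpolation $(1+|t|)^{-N\delta}\|S_0 f\|_{L^{p'}_\kappa}$, which is at least as good as $|t|^{-(N-1+\theta)\delta}$ for $\theta\le 1$. For part (2), I would instead use \eqref{res3-2} with $\theta\in[0,(N-1)/2]$, producing $\|K_t\tilde\Delta_j f\|_{L^p_\kappa}\lesssim|t|^{-2\theta\delta}2^{2j(N-\theta)\delta}\|\tilde\Delta_j f\|_{L^{p'}_\kappa}$; the substitution $\theta'=N-1-2\theta\in[0,N-1]$ rewrites $2\theta=N-1-\theta'$ and $2(N-\theta)=N+1+\theta'$, matching exactly the statement's parameters.

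Part (3) is a composite. For $|t|\ge 1$ the bound follows from part (1) by monotonicity, since $2s\leq -(N+1+\theta)\delta$ only weakens the Besov norm on the left and strengthens the one on the right (using the inclusion in Proposition \ref{properties}). For $|t|\le 1$ I follow the wave-equation template: the $S_0$ piece is uniformly bounded from $\theta=0$ in \eqref{res-sum2}, and for each $j\ge 0$ the interpolated form of \eqref{res3-2} gives
\begin{equation*}
2^{js}\|K_t\tilde\Delta_j f\|_{L^p_\kappa}\lesssim|t|^{-2\theta\delta}2^{2j(s+(N-\theta)\delta)}\,2^{-js}\|\tilde\Delta_j f\|_{L^{p'}_\kappa},\qquad 0\le\theta\le\tfrac{N-1}{2}.
\end{equation*}
When $s+N\delta\le 0$ I take $\theta=0$ to get a uniform bound; when $s+N\delta>0$ I choose $\theta=N+s/\delta$, which lies in $[0,(N-1)/2]$ precisely because $2s\le -(N+1)\delta$ forces $\theta\le (N-1)/2$, killing the $2^{2j(\cdot)}$ growth and yielding decay $|t|^{-2(s+N\delta)}$. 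Summing gives $|t|^{-2\max(s+N\delta,0)}\|f\|_{B^{-s,\kappa}_{p',q}}$, as claimed. The main obstacle I foresee is purely book-keeping: one must track the negative parameter $\alpha_1=-1$ correctly through the exponent \eqref{res3-3} (this is exactly what upgrades the wave-type gain $|t|^{-(N-1)/2}$ to the sharper Klein--Gordon gain $|t|^{-(N-1+\theta)/2}$), and must verify that the admissibility window $\theta\in[0,(N-1)/2]$ for \eqref{res3-2} is large enough to cover the required range of $s$ in part (3), which it is precisely under the hypothesis $2s\le-(N+1+\theta)\delta$.
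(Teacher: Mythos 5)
Your proposal is correct and follows essentially the same route as the paper: parts (1) and (2) come from \eqref{res3-3} and \eqref{res3-2} respectively (with the $S_0$ block from part (C) of Theorem \ref{ResultTime}), combined with Plancherel and Riesz--Thorin, and your exponent computations (in particular $N-\tfrac{m_1(N-1+\theta)}{2}-\tfrac{\theta(\alpha_1-m_1)}{2}=\tfrac{N+1+\theta}{2}$ with $m_1=1$, $\alpha_1=-1$, and the reparametrization $\theta'=N-1-2\theta$) match the paper's. The only cosmetic difference is in part (3) for $|t|\leq 1$: you optimize $\theta$ directly at the dyadic level according to the sign of $s+N\delta$, while the paper reduces to part (2) and a Besov-space inclusion, but these are the same argument.
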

\begin{proof}
First, we prove part (1). From \eqref{res3-3} and \eqref{res-sum2} of Theorem \ref{ResultTime} with  Plancherel's identity and Riesz-Thorin interpolation theorem, for $0\leq \theta\leq 1$, we obtain
\begin{align*}
   \|
   K_t\tilde{\Delta}_jf\|_{L_\kappa^p (\mathbb{R}^n)}&\lesssim |t|^{-(N-1+\theta)\delta}2^{j\left(N+1+\theta\right)\delta}\|\tilde{\Delta}_jf\|_{L_\kappa^{p'}(\mathbb{R}^n)}, \quad  j\geq 0,\\
    \|K_tS_0f\|_{L_\kappa^p (\mathbb{R}^n)}&\lesssim |t|^{-(N-1+\theta)\delta}\|S_0f\|_{L_\kappa^{p'}(\mathbb{R}^n)}.
\end{align*}
Hence,  we complete the proof of part (1)  from the fact that   $2s=-\left(N+1+\theta\right)\delta$.
 
 Next, we prove part (2). Again from \eqref{res3-2} and \eqref{res-sum1} of Theorem \ref{ResultTime} with the help of  Plancherel's identity and Riesz-Thorin interpolation theorem, for $0\leq \theta\leq N-1$, we get
\begin{align*}
     \|K_tS_0f\|_{L_\kappa^p (\mathbb{R}^n)}&\lesssim |t|^{-(N-1-\theta)\delta}\|S_0f\|_{L_\kappa^{p'}(\mathbb{R}^n)},\\
    \|K_t\tilde{\Delta}_jf\|_{L_\kappa^p (\mathbb{R}^n)}&\lesssim |t|^{-(N-1-\theta)\delta}2^{j(N+1+\theta)\delta}\|\tilde{\Delta}_jf\|_{L_\kappa^{p'}(\mathbb{R}^n)}.
\end{align*}
From the fact that $2s=-(N+1+\theta)\delta$, we have
\begin{equation*}
    \|K_tf\|_{B_{p,q}^{s,\kappa}(\mathbb{R}^n)}\lesssim |t|^{-(N-1-\theta)\delta}\|f\|_{B_{p',q}^{-s,\kappa}(\mathbb{R}^n)}
\end{equation*}
and this completes the proof of part (2). To prove (3), we divide into the following three cases. 

\textbf{Case 1.} $|t|\geq 1$. Following the proof of part (1), we immediately get the required estimates. 

\textbf{Case 2.} $|t|\leq 1$ and $s+N\delta\geq 0$. In this case,  our required estimate will follow immediately from part (2). Indeed, if $2s\leq -(N+1+\theta)\delta$ for some $0\leq\theta\leq1$, then $0\leq 2(s+N\delta)\leq (N-1-\theta)\delta$, which also provides  us $0\leq \theta \leq N-1$. Therefore from part (2), we obtain 
\begin{equation*}
\|K_tf\|_{B_{p,q}^{s,\kappa}(\mathbb{R}^n)}\lesssim |t|^{-(N-1-\theta)\delta}\|f\|_{B_{p',q}^{-s,\kappa}(\mathbb{R}^n)}\leq |t|^{-2(s+N\delta)}\|f\|_{B_{p',q}^{-s,\kappa}(\mathbb{R}^n)}.
\end{equation*}  
\textbf{Case 3.} $|t|\leq 1$ and $s+N\delta< 0$. In this case taking $\theta_1=N-1$ in (2), for $s_1=-N\delta>s$, we get
\begin{equation*}
    \|K_tf\|_{B_{p,q}^{s_1,\kappa}(\mathbb{R}^n)}\lesssim \|f\|_{B_{p',q}^{-s_1,\kappa}(\mathbb{R}^n)}.
\end{equation*}
Since $s_1>s$,  using the inclusion relation (4) between Besov spaces in Proposition \ref{properties}, we   immediately obtain 
\begin{equation*}
   \|K_tf\|_{B_{p,q}^{s,\kappa}(\mathbb{R}^n)}\|K_tf\|_{B_{p,q}^{s_1,\kappa}(\mathbb{R}^n)}\lesssim \|f\|_{B_{p',q}^{-s_1,\kappa}(\mathbb{R}^n)}\leq \|f\|_{B_{p',q}^{-s,\kappa}(\mathbb{R}^n)}
\end{equation*} and this completes the proof of part (3). 
\end{proof}
%------------------------------------------------------------

 \subsection{Beam equation}
Finally, we consider the beam equation
	\begin{equation}\label{BeamEqu}
	\begin{cases}
	\partial_t^2u+\Delta_\kappa^2 u+u=g,\\
	u|_{t=0}=u_0 ,  \\
	\partial_tu|_{t=0}=u_1.
	\end{cases}
	\end{equation}
By Duhamel's principle, the solution is formally given by
\begin{equation*}\label{solution3}
	u(t)=\frac{d\mathcal{B}_t}{dt}u_0+\mathcal{B}_tu_1-\int_0^t\mathcal{B}_{t-\tau}g(\tau,\cdot)\,d\tau,
	\end{equation*}
where
	\begin{equation*}
	\mathcal{B}_t=\frac{\sin(t\sqrt{I+\Delta_\kappa^2})}{\sqrt{I+\Delta_\kappa^2}},
	\quad  \frac{d\mathcal{B}_t}{dt}=\cos(t\sqrt{I+\Delta_\kappa^2}).
	\end{equation*}
So we naturally introduce the operator $B_t=e^{it\sqrt{I+\Delta_k^2}}$, which corresponds to the case when $\phi(r)=\sqrt{1+r^4}$.
By a simple calculation, we see that
	\begin{equation*}
	\phi'(r)=2r^3(1+r^4)^{-\frac{1}{2}}, \quad
	\phi''(r)=(6r^2+2r^6)(1+r^4)^{-\frac{3}{2}}.
	\end{equation*}
This shows that  $\phi$ satisfies (C1)–(C4) with $m_1=\alpha_1=2, m_2=\alpha_2=4$.  Using  Theorem \ref{ResultTime}, we obtain the following decay estimate for $B_t$.
\begin{proposition}\label{beam-decay}
Let $2\leq p\leq\infty$, $1\leq q\leq\infty$, $\delta=\frac{1}{2}-\frac{1}{p},$ and $s\leq 0$. Then 
\begin{equation*}
    \|B_tf\|_{B_{p,q}^{s,\kappa}(\mathbb{R}^n)}\lesssim k(t)\|f\|_{B_{p',q}^{-s,\kappa}(\mathbb{R}^n)}, \quad k(t)=\begin{cases}
	|t|^{-\max (s+N\delta,0)} , \, &|t|\leq1,\\
	|t|^{-\frac{N\delta}{2}}, \,&|t|\geq1.
	\end{cases}
\end{equation*}
\end{proposition}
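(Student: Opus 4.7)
The plan is to mimic the proofs of Propositions~\ref{Klein-decay} and the fourth-order Schr\"{o}dinger decay, splitting $|t|\le 1$ and $|t|\ge 1$ and handling low and high frequencies separately. Since $\phi(r)=\sqrt{1+r^4}$ satisfies (C1)--(C4) with $m_1=\alpha_1=2$ and $m_2=\alpha_2=4$, I can plug these values into Theorem~\ref{ResultTime} and then use Plancherel's identity, together with Riesz--Thorin interpolation against the trivial $L^2_\kappa$-bound $\|B_t g\|_{L^2_\kappa}\le \|g\|_{L^2_\kappa}$, to pass from the $L^1_\kappa\to L^\infty_\kappa$ estimate to an $L^{p'}_\kappa\to L^p_\kappa$ estimate with interpolation parameter $2\delta=1-2/p$.

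For the regime $|t|\ge 1$, I would treat $S_0$ via \eqref{res-sum2} with $\theta=N/m_2=N/4$ (permissible because $\alpha_2=m_2=4$ and $N/4\le N/2$), obtaining $\|B_tS_0f\|_{L^\infty_\kappa}\lesssim |t|^{-N/4}\|S_0f\|_{L^1_\kappa}$, whose interpolation with $L^2_\kappa$ yields the desired $|t|^{-N\delta/2}$. For the high-frequency pieces $\tilde{\Delta}_jf$ with $j\ge 0$, \eqref{res3-3} with $\theta=1$ and $m_1=\alpha_1=2$ gives $\|B_t\tilde{\Delta}_jf\|_{L^\infty_\kappa}\lesssim |t|^{-N/2}\|\tilde{\Delta}_jf\|_{L^1_\kappa}$ with the dyadic exponent $N-2\cdot N/2=0$; interpolation then yields $\|B_t\tilde{\Delta}_jf\|_{L^p_\kappa}\lesssim |t|^{-N\delta}\|\tilde{\Delta}_jf\|_{L^{p'}_\kappa}$. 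Multiplying by $2^{js}$ and using the hypothesis $s\le 0$ together with $j\ge 0$ to absorb $2^{2js}\le 1$ gives the Besov weight factor $2^{-js}$ on the right side, so the high-frequency contribution is dominated by $|t|^{-N\delta}\|f\|_{B^{-s,\kappa}_{p',q}}$, which is better than the $S_0$ bound.

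For $|t|\le 1$ the $S_0$ term is handled with $\theta=0$ in \eqref{res-sum2}, producing the trivial bound $\|B_tS_0f\|_{L^p_\kappa}\lesssim\|S_0f\|_{L^{p'}_\kappa}$. For $\tilde{\Delta}_jf$ with $j\ge 0$, I would combine \eqref{res3-2} on $0\le\theta\le(N-1)/2$ with \eqref{res3-3} (reparametrising the latter as $\theta'=(N-1+\theta)/2$, so that since $\alpha_1-m_1=0$ the resulting estimate has the unified form $\|B_t\tilde{\Delta}_jf\|_{L^\infty_\kappa}\lesssim|t|^{-\theta'}2^{j(N-2\theta')}\|\tilde{\Delta}_jf\|_{L^1_\kappa}$) to obtain a single family of bounds valid for the full range $0\le\theta'\le N/2$. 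Interpolating with the $L^2_\kappa$ estimate and multiplying by $2^{js}$ produces the factor $|t|^{-2\theta'\delta}2^{j(2s+2(N-2\theta')\delta)}$ in front of $2^{-js}\|\tilde{\Delta}_jf\|_{L^{p'}_\kappa}$. I then pick $\theta'$ so that $2\theta'\delta=s+N\delta$ whenever $-N\delta\le s\le 0$ (making the $j$-dependent factor bounded by $1$), and $\theta'=0$ if $s<-N\delta$, yielding the uniform bound $|t|^{-\max(s+N\delta,0)}$. Summing in $q$-norm over $j\ge 0$ and adding the $S_0$ contribution completes the $|t|\le 1$ case.

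The main obstacle, conceptually, is matching the admissible interpolation range with the required Besov exponent $s$: the range of $\theta'$ permitted by the two Theorem~\ref{ResultTime} estimates must cover the value that balances $2s$ against the dyadic exponent, and one must verify that $\theta'\in[0,N/2]$ is indeed sufficient for all $s\le 0$. Once this bookkeeping is done, the remainder is straightforward interpolation and summation. Pasting the $|t|\le 1$ and $|t|\ge 1$ estimates together produces the piecewise $k(t)$ as stated.
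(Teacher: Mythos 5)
Your proposal is correct and follows essentially the same route as the paper: the same choice $\theta=N/4$ in \eqref{res-sum2} for the low-frequency piece when $|t|\ge 1$, the same unified high-frequency bound $\|B_t\tilde{\Delta}_jf\|_{L_\kappa^\infty}\lesssim |t|^{-\theta}2^{j(N-2\theta)}\|\tilde{\Delta}_jf\|_{L_\kappa^1}$ for $0\le\theta\le N/2$ obtained by merging \eqref{res3-2} and \eqref{res3-3} (the paper's \eqref{full-B}), and the same interpolation with Plancherel followed by choosing $\theta$ so that $s+N\delta=2\theta\delta$ (or $\theta=0$ when $s<-N\delta$) in the $|t|\le 1$ regime. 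The bookkeeping you flag checks out: $\theta=\frac{s}{2\delta}+\frac{N}{2}\in[0,N/2]$ precisely when $-N\delta\le s\le 0$, which is exactly the case split the paper makes.
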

\begin{proof}
First, we prove the case $|t|\geq 1$. For $\theta=\frac{N}{4}$ in  \eqref{res-sum2} of Theorem \ref{ResultTime} with the help of Plancherel's identity and Riesz-Thorin interpolation theorem, we have
\begin{equation}\label{B-Sf}
    \|B_t S_0f\|_{L_\kappa^p (\mathbb{R}^n)}\lesssim |t|^{-\frac{N\delta}{2}}\|S_0f\|_{L_\kappa^{p'} (\mathbb{R}^n)}.
\end{equation}
Again for $j\geq 0$, it follows from \eqref{res3-2} and  \eqref{res3-3} of Theorem \ref{ResultTime} that
\begin{equation}\label{full-B}
	\|B_t \tilde{\Delta}_j f\|_{L_\kappa^\infty(\mathbb{R}^n)} \lesssim |t|^{-\theta}2^{j\left(N-2\theta\right)}\|\tilde{\Delta}_jf\|_{L_\kappa^1(\mathbb{R}^n)},\,0 \leq \theta \leq \frac{N}{2}.
	\end{equation}
Taking $\theta=\frac{N}{2}$ in \eqref{full-B}, combined with Plancherel's identity and Riesz-Thorin interpolation theorem, we obtain
\begin{equation}\label{j-Bf}
\|B_t\tilde{\Delta}_j f\|_{L_\kappa^p (\mathbb{R}^n)}\lesssim |t|^{-N\delta}\|\tilde{\Delta}_j f\|_{L_\kappa^{p'} (\mathbb{R}^n)}\leq|t|^{-\frac{N\delta}{2}}2^{-2js}\|\tilde{\Delta}_jf\|_{L_\kappa^{p'} (\mathbb{R}^n)},
\end{equation}
for any $j\geq0$. From \eqref{B-Sf} and \eqref{j-Bf}, we get our desired result.

Next, we consider the case $|t|<1$. By taking $\theta=0$ in   \eqref{res-sum2} of Theorem \ref{ResultTime} along with   Plancherel's identity and Riesz-Thorin interpolation theorem, we have
\begin{equation*}
    \|B_t S_0f\|_{L_\kappa^p (\mathbb{R}^n)}\lesssim \|S_0f\|_{L_\kappa^{p'} (\mathbb{R}^n)}.
\end{equation*}
For $j\geq 0$, it follows from \eqref{full-B}, Plancherel's identity, and Riesz-Thorin interpolation theorem, for $0 \leq \theta \leq \frac{N}{2}$, we have   
\begin{equation*}
	\|B_t \tilde{\Delta}_j f\|_{L_\kappa^p(\mathbb{R}^n)} \lesssim |t|^{-2\theta\delta}2^{2j\left(N-2\theta\right)\delta}\|\tilde{\Delta}_j f\|_{L_\kappa^{p'}(\mathbb{R}^n)},
	\end{equation*}
and thus
\begin{equation*}
	2^{js}\|B_t \tilde{\Delta}_j f\|_{L_\kappa^p(\mathbb{R}^n)} \lesssim |t|^{-2\theta\delta}2^{2j\left(s+\left(N-2\theta\right)\delta\right)}2^{-js}\|\tilde{\Delta}_j f\|_{L_\kappa^{p'}(\mathbb{R}^n)}.
	\end{equation*}
If $-N\delta \leq s\leq 0$, we can choose $0\leq\theta\leq \frac{N}{2}$ such that $s+N\delta=2\theta\delta$. If $s<-N\delta$, we can choose $\theta=0$. Thus, we have
\begin{equation*}
	2^{js}\|B_t \tilde{\Delta}_j f\|_{L_\kappa^p(\mathbb{R}^n)} \lesssim |t|^{-\max (s+N\delta,0)}2^{-js}\|\tilde{\Delta}_j f\|_{L_\kappa^{p'}(\mathbb{R}^n)},
	\end{equation*}
which completes the proof of the proposition.
\end{proof}

\section{Well-posedness results for nonlinear Klein-Gordon and beam equations}\label{sec6}
Using the decay estimates for the nonlinear Klein-Gordon and beam equation with the different decay rates between $|t|>1$ and $|t|\leq1$ in the previous section, we prove the global well-posedness results for nonlinear problems. First, we consider the Cauchy problem for the nonlinear Klein-Gordon equation
\begin{equation}\label{N-K-GEqu}
	\begin{cases}
	\partial_t^2u-\Delta_\kappa u+u=F(u),\\
	u|_{t=0}=u_0 ,  \\
	\partial_tu|_{t=0}=u_1,
	\end{cases}
	\end{equation}
with a power-like nonlinearity $F(u)$ satisfies  
\begin{equation}\label{F-condition}
    F(0)=0,\quad |F(u)-F(v)|\leq C(|u|^\alpha+|v|^\alpha)|u-v|
\end{equation}
for some $\alpha>0$, where $C$ is a positive constant.
By Duhamel's principle, the solution of \eqref{N-K-GEqu}   is formally given by
\begin{equation*}
    u=\frac{d\mathcal{K}_t}{dt}u_0+\mathcal{K}_tu_1-\int_0^t\mathcal{K}_{t-\tau}F(u)\,d\tau,
\end{equation*}
where $\mathcal{K}_t=\frac{\sin(t\sqrt{I-\Delta_\kappa})}{\sqrt{I-\Delta_\kappa}}$.  Similarly, as in the Laplacian in   Euclidean space, we find the critical index as
\begin{equation*}
    \alpha_K(N)=\frac{2-N+\sqrt{N^2+12N+4}}{2N}.
\end{equation*}
Using the classical fixed point argument, we obtain the global well-posedness theorem for the nonlinear Klein-Gordon equation.
\begin{theorem}
     Let $\alpha_K(N)<\alpha<\frac{4}{N}$, $\sigma(\alpha)=\frac{\alpha(N+2)}{2(\alpha+2)}$, and $(u_0,u_1)\in H^{\sigma(\alpha),\kappa}_\frac{\alpha+2}{\alpha+1}(\mathbb{R}^n)\times H^{\sigma(\alpha)-1,\kappa}_\frac{\alpha+2}{\alpha+1}(\mathbb{R}^n)$ with sufficient small norm. Then the equation \eqref{N-K-GEqu} has a unique solution
     \begin{equation*}
         u\in C(\mathbb{R}, H^{\frac{\alpha}{\alpha+2}, \kappa}(\mathbb{R}^n))\cap L^{\alpha+1}(\mathbb{R},L_\kappa^{\alpha+2}(\mathbb{R}^n)).
     \end{equation*}
\end{theorem}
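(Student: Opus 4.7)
The plan is to run a Banach fixed-point argument on the Duhamel operator
\begin{equation*}
\Phi(u)(t) = \tfrac{d\mathcal{K}_t}{dt}u_0 + \mathcal{K}_t u_1 - \int_0^t \mathcal{K}_{t-\tau}F(u(\tau))\,d\tau
\end{equation*}
in a small closed ball of
\begin{equation*}
X = C(\mathbb{R}; H^{\alpha/(\alpha+2),\kappa}(\mathbb{R}^n)) \cap L^{\alpha+1}(\mathbb{R}; L_\kappa^{\alpha+2}(\mathbb{R}^n)),
\end{equation*}
equipped with the sum of its two natural norms. Uniqueness and continuous dependence fall out of the contraction automatically, so the entire result reduces to (i) a linear Strichartz-type bound of the free evolution in $X$ by the data norms, (ii) a nonlinear bound of the Duhamel integral in $X$ by $\|u\|_X^{\alpha+1}$, and (iii) a matching Lipschitz-type difference estimate.

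For step (i), I will apply Proposition \ref{General-Strichartz} with the Klein-Gordon dispersive estimate of Proposition \ref{Klein-decay}(3), taking the admissible pair $(r_1,p_1)=(\alpha+1,\alpha+2)$ together with the parameter $\theta\in[0,1]$ chosen so that $2/r_1$ sits in the interior of $E(\theta_1,\theta_2)$. A direct computation identifies the Besov regularity index $s_1=-\tfrac{N+1+\theta}{2}\bigl(\tfrac12-\tfrac{1}{\alpha+2}\bigr)$; combining with the Besov-into-Sobolev inclusion of Proposition \ref{properties} converts the Besov norm on $u_0,u_1$ into exactly $H^{\sigma(\alpha),\kappa}_{(\alpha+2)/(\alpha+1)}\times H^{\sigma(\alpha)-1,\kappa}_{(\alpha+2)/(\alpha+1)}$, which explains the form of $\sigma(\alpha)=\alpha(N+2)/(2(\alpha+2))$. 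The $C_tH^{\alpha/(\alpha+2),\kappa}$ component is obtained from the $r=\infty$ endpoint in Proposition \ref{General-Strichartz}. For step (ii), the pointwise bound $|F(u)|\lesssim |u|^{\alpha+1}$ from \eqref{F-condition} combined with H\"older in space-time gives
\begin{equation*}
\|F(u)\|_{L^{r_2'}_tL_\kappa^{p_2'}} \;\lesssim\; \|u\|_{L^{\alpha+1}_tL_\kappa^{\alpha+2}}^{\alpha+1}
\end{equation*}
for the dual pair fixed by $(\alpha+1)r_2'=\alpha+1$ and $(\alpha+1)p_2'=\alpha+2$; the required Besov norm on the right-hand side of the retarded estimate in Proposition \ref{General-Strichartz} is then recovered by a further dual Sobolev embedding. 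The Lipschitz part of \eqref{F-condition} yields $\|\Phi(u)-\Phi(v)\|_X\lesssim (\|u\|_X^\alpha+\|v\|_X^\alpha)\|u-v\|_X$ by an identical manipulation, which closes the contraction for initial data of sufficiently small norm.

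The principal obstacle will be the simultaneous bookkeeping of the exponent constraints. One must check that the wave-admissibility bound $2/r+(N-1)/p\le (N-1)/2$, the integrability at infinity of the dispersive weight $|t|^{-(N-1+\theta)\delta}$ against the chosen time exponent, and the H\"older balance forced by the nonlinearity are compatible and pin down precisely the range $\alpha_K(N)<\alpha<4/N$: the upper bound $4/N$ comes directly from the Strichartz admissibility with $p=\alpha+2$, while the lower bound $\alpha_K(N)$ emerges as the positive root of a quadratic encoding integrability of $|t|^{-(N-1+\theta)\delta}$ against the $L^{\alpha+1}_t$ norm simultaneously for some admissible $\theta\in[0,1]$, giving exactly $N\alpha^2+(N-2)\alpha-4=0$. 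Once those exponents are verified and $\sigma(\alpha)$ is seen to be the common loss across both admissible pairs used, the contraction is routine, and persistence in $C_tH^{\alpha/(\alpha+2),\kappa}$ (as opposed to mere $L^\infty_t$) follows from strong continuity of $\mathcal{K}_t$ on Dunkl Sobolev spaces together with dominated convergence applied to the Duhamel integral.
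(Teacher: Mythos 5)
Your overall skeleton (Duhamel formula plus Banach fixed point in a small ball, with the quadratic $N\alpha^2+(N-2)\alpha-4=0$ producing $\alpha_K(N)$) matches the paper, but the core linear estimate is routed through the wrong tool, and this is where the argument breaks. You propose to obtain step (i) from Proposition \ref{General-Strichartz}, whose homogeneous estimate reads $\|U_tf\|_{L^{r_1}(\mathbb{R},B^{s_1,\kappa}_{p_1,2})}\lesssim\|f\|_{L^2_\kappa}$: the data norm there is $L^2_\kappa$-based, whereas the theorem assumes $(u_0,u_1)\in H^{\sigma(\alpha),\kappa}_{\frac{\alpha+2}{\alpha+1}}\times H^{\sigma(\alpha)-1,\kappa}_{\frac{\alpha+2}{\alpha+1}}$, which are $L^{p'}$-based Sobolev spaces with $p'=\frac{\alpha+2}{\alpha+1}<2$. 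No embedding in Proposition \ref{properties} converts the one into the other in the direction you need: passing from $H^{\sigma(\alpha),\kappa}_{p'}$ to an $L^2$-based space costs $N\delta$ derivatives (with $\delta=\frac12-\frac1{\alpha+2}$), leaving only $H^{\sigma(\alpha)-N\delta,\kappa}=H^{\alpha/(\alpha+2),\kappa}$, and the Besov loss $-s_1=\frac{N+1+\theta}{2}\delta$ forced by Klein--Gordon admissibility exceeds $\frac{\alpha}{\alpha+2}$ as soon as $N+1+\theta>4$. So for general $N=2\gamma+n$ your step (i) cannot close in the stated range of $\alpha$. A related problem occurs in step (ii): your H\"older balance forces $r_2'=1$, i.e.\ $r_2=\infty$, and $r=\infty$ is not in $E(\theta_1,\theta_2)$ (case (4) would need $\theta_1<0$), so the non-diagonal retarded estimate of Proposition \ref{General-Strichartz} with $g\in L^1_tL^{(\alpha+2)/(\alpha+1)}_\kappa$ is not available; the only $L^1_t$ input that proposition accepts is $L^1_tL^2_\kappa$. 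Finally, the upper bound $\alpha<4/N$ does not come from Strichartz admissibility (with $(r,p)=(\alpha+1,\alpha+2)$ admissibility is a \emph{lower} bound on $\alpha$); it is exactly the condition $\sigma(\alpha)<1$.

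What the paper actually does is bypass the $TT^*$ machinery entirely: it takes the $L^{p'}\to L^p$ dispersive estimate of Proposition \ref{Klein-decay} with $\theta=1$, which gives $\|\mathcal{K}_t u_1\|_{L^{\alpha+2}_\kappa}\lesssim k(t)\|u_1\|_{H^{\sigma(\alpha)-1,\kappa}_{(\alpha+2)/(\alpha+1)}}$ with $k(t)=|t|^{-\max(\frac{\alpha(N-2)}{2(\alpha+2)},0)}$ for $|t|\le1$ and $|t|^{-\frac{\alpha N}{2(\alpha+2)}}$ for $|t|\ge1$, observes that $\alpha_K(N)<\alpha<\frac4N$ is precisely what makes $k\in L^{\alpha+1}(\mathbb{R})$ and $\sigma(\alpha)<1$, and then controls the Duhamel term by Young's convolution inequality $L^{\alpha+1}*L^1\subseteq L^{\alpha+1}$ in the time variable applied to $k(t-\tau)\|F(u)(\tau)\|_{L^{(\alpha+2)/(\alpha+1)}_\kappa}$. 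The contraction is run in the single norm $L^{\alpha+1}(\mathbb{R},L^{\alpha+2}_\kappa)$, and membership in $L^\infty(\mathbb{R},H^{\alpha/(\alpha+2),\kappa})$ is recovered afterwards from the energy estimate and the embedding $H^{\sigma(\alpha),\kappa}_{(\alpha+2)/(\alpha+1)}\subseteq H^{\alpha/(\alpha+2),\kappa}$. If you replace your step (i) and step (ii) by this direct time-integration of the dispersive estimate, the rest of your fixed-point argument goes through essentially as you wrote it.
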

\begin{proof}
 By Proposition \ref{properties} and \ref{Klein-decay}, we have
 \begin{align}
     \|\frac{d\mathcal{K}_t}{dt} u_0\|_{L^{\alpha+2}_k(\mathbb{R}^n)}&\leq  \|\frac{d\mathcal{K}_t}{dt} u_0\|_{B^{0,\kappa}_{\alpha+2,2}(\mathbb{R}^n)}\lesssim k(t) \|u_0\|_{B^{\sigma(\alpha),\kappa}_{\frac{\alpha+2}{\alpha+1},2}(\mathbb{R}^n)}\lesssim k(t) \|u_0\|_{H^{\sigma(\alpha),\kappa}_\frac{\alpha+2}{\alpha+1}(\mathbb{R}^n)},\nonumber\\ 
      \|\mathcal{K}_t u_1\|_{L^{\alpha+2}_k(\mathbb{R}^n)}&\leq  \|\mathcal{K}_t u_1\|_{B^{0,\kappa}_{\alpha+2,2}(\mathbb{R}^n)}\lesssim k(t) \|u_1\|_{B^{\sigma(\alpha)-1,\kappa}_{\frac{\alpha+2}{\alpha+1},2}(\mathbb{R}^n)}\lesssim k(t) \|u_1\|_{H^{\sigma(\alpha)-1,\kappa}_\frac{\alpha+2}{\alpha+1}(\mathbb{R}^n)},\label{A-decay}
 \end{align}
 where
 \begin{equation*}
 k(t)=\begin{cases}
	|t|^{-\max (\frac{\alpha(N-2)}{2(\alpha+2)},0)} , \, &|t|\leq1,\\
	|t|^{-\frac{\alpha N}{2(\alpha+2)}}, \,&|t|\geq1.
	\end{cases}
\end{equation*}
For   $\alpha_K(N)<\alpha<\frac{4}{N}$, we notice that 
\begin{equation*}
    (\alpha+1)\frac{\alpha(N-2)}{2(\alpha+2)}<1,\; \;(\alpha+1)\frac{\alpha N}{2(\alpha+2)}>1,\;\;\text{and }\; \sigma(\alpha)<1.
\end{equation*}
This implies that  $k\in L^{\alpha+1}(\mathbb{R})$ and  
\begin{align*}
  \|\frac{d\mathcal{K}_t}{dt} u_0\|_{L^{\alpha+1}(\mathbb{R},L^{\alpha+2}_k(\mathbb{R}^n))}&\leq C\|u_0\|_{H^{\sigma(\alpha),\kappa}_\frac{\alpha+2}{\alpha+1}(\mathbb{R}^n)},\\
 \|\mathcal{K}_t u_1\|_{L^{\alpha+1}(\mathbb{R},L^{\alpha+2}_k(\mathbb{R}^n))}&\leq C\|u_1\|_{H^{\sigma(\alpha)-1,\kappa}_\frac{\alpha+2}{\alpha+1}(\mathbb{R}^n)}.
\end{align*}
Therefore,  from \eqref{A-decay}, Proposition \ref{properties}, and Young's inequality, it follows  that
\begin{align*}
 \left\|\int_0^t\mathcal{K}_{t-\tau}F(u)\,d\tau\right\|_{L^{\alpha+1}(\mathbb{R},L^{\alpha+2}_k(\mathbb{R}^n))}&\leq  \left\|\int_0^t\|\mathcal{K}_{t-\tau}F(u)\|_{L^{\alpha+2}_k(\mathbb{R}^n)}\,d\tau\right\|_{L^{\alpha+1}(\mathbb{R})} \\
 &\leq C \left\|\int_0^t k(t-\tau)\|F(u)\|_{H^{\sigma(\alpha)-1,\kappa}_\frac{\alpha+2}{\alpha+1}(\mathbb{R}^n)}\,d\tau\right\|_{L^{\alpha+1}(\mathbb{R})} \\
  &\leq C \left\|\int_0^t k(t-\tau)\|F(u)\|_{L_k^\frac{\alpha+2}{\alpha+1}(\mathbb{R}^n)}\,d\tau\right\|_{L^{\alpha+1}(\mathbb{R})} \\
  &\leq C\|k\|_{L^{\alpha+1}(\mathbb{R})} \|F(u)\|_{L^1(\mathbb{R},L_k^\frac{\alpha+2}{\alpha+1}(\mathbb{R}^n))}\\
  &\leq   C\left\||u|^{\alpha+1}\right\|_{L^1(\mathbb{R},L_k^\frac{\alpha+2}{\alpha+1}(\mathbb{R}^n))}\\
  &=C\|u\|^{\alpha+1}_{L^{\alpha+1}(\mathbb{R},L_k^{\alpha+2}(\mathbb{R}^n))}.
\end{align*}
Taking $M=2C(\|u_0\|_{H^{\sigma(\alpha),\kappa}_\frac{\alpha+2}{\alpha+1}(\mathbb{R}^n)}+\|u_1\|_{H^{\sigma(\alpha)-1,\kappa}_\frac{\alpha+2}{\alpha+1}(\mathbb{R}^n)})$, we set the evolution space $X$ as 
\begin{equation*}
    X=\{u\in L^{\alpha+1}(\mathbb{R},L_k^{\alpha+2}(\mathbb{R}^n)): \|u\|_{L^{\alpha+1}(\mathbb{R},L_k^{\alpha+2}(\mathbb{R}^n))}\leq M\}.
\end{equation*}
We say that a function  $u$ is  a {\it mild solution} to (\ref{N-K-GEqu})  if $u$ is a fixed point for  the    integral operator  
\begin{equation*}
    \mathcal{T}: u\in X \mapsto \mathcal{T}u(t, x):=\frac{d\mathcal{K}_t}{dt}u_0+A_tu_1-\int_0^t\mathcal{K}_{t-\tau}F(u)\,d\tau. 
\end{equation*}
We prove the global-in-time existence and uniqueness of small data Sobolev solutions of low regularity to the Cauchy problem (\ref{N-K-GEqu})  by finding a unique fixed point to the operator $ \mathcal{T}$. It means that  there exists a unique global solution $u$    to the equation $ \mathcal{T} u=u\in X$, which also gives the solution to (\ref{N-K-GEqu}). 	In order to prove that the operator $ \mathcal{T}$ has a uniquely determined fixed point, we use Banach's fixed point argument with respect to the norm on $X$ as defined above.    

 Now for any $u,v\in X$, we have
\begin{equation*}
    \|\mathcal{T}u\|_{L^{\alpha+1}(\mathbb{R},L_k^{\alpha+2}(\mathbb{R}^n))}\leq \frac{M}{2}+CM^{\alpha+1},
\end{equation*}
and 
\begin{align*}
      &\quad\|\mathcal{T}u-\mathcal{T}v\|_{L^{\alpha+1}(\mathbb{R},L_k^{\alpha+2}(\mathbb{R}^n))}\\
     &=\left\|\int_0^t\mathcal{K}_{t-\tau}\left(F(u)-F(v)\right)\,d\tau\right\|_{L^{\alpha+1}(\mathbb{R},L_k^{\alpha+2}(\mathbb{R}^n))}\\
     &\leq C \left\|F(u)-F(v)\right\|_{L^1(\mathbb{R},L_k^\frac{\alpha+2}{\alpha+1}(\mathbb{R}^n))}\\
     &\leq C \left\|(|u|^\alpha+|v|^\alpha)|u-v|\right\|_{L^1(\mathbb{R},L_k^\frac{\alpha+2}{\alpha+1}(\mathbb{R}^n))}\\
    &=C\left\|\left\|(|u|^\alpha+|v|^\alpha)|u-v|\right\|_{L_k^\frac{\alpha+2}{\alpha+1}(\mathbb{R}^n)}\right\|_{L^1(\mathbb{R})}\\
     &\leq C\left\|\left\||u|^\alpha+|v|^\alpha\right\|_{L_k^\frac{\alpha+2}{\alpha}(\mathbb{R}^n)}\left\|u-v\right\|_{L_k^{\alpha+2}(\mathbb{R}^n)}\right\|_{L^1(\mathbb{R})}\\
      &\leq C\left\||u|^\alpha+|v|^\alpha\right\|_{L^\frac{\alpha+1}{\alpha}(\mathbb{R}, L_k^\frac{\alpha+2}{\alpha}(\mathbb{R}^n))}\left\|u-v\right\|_{L^{\alpha+1}(\mathbb{R}, L_k^{\alpha+2}(\mathbb{R}^n))}\\
       &\leq C\left(\left\||u|^\alpha\right\|_{L^\frac{\alpha+1}{\alpha}(\mathbb{R}, L_k^\frac{\alpha+2}{\alpha}(\mathbb{R}^n))}+\left\||v|^\alpha\right\|_{L^\frac{\alpha+1}{\alpha}(\mathbb{R}, L_k^\frac{\alpha+2}{\alpha}(\mathbb{R}^n))}\right)\left\|u-v\right\|_{L^{\alpha+1}(\mathbb{R}, L_k^{\alpha+2}(\mathbb{R}^n))}\\
       &=C\left(\|u\|_{L^{\alpha+1}(\mathbb{R}, L_k^{\alpha+2}(\mathbb{R}^n))}^\alpha+\|v\|_{L^{\alpha+1}(\mathbb{R}, L_k^{\alpha+2}(\mathbb{R}^n))}^\alpha \right)\left\|u-v\right\|_{L^{\alpha+1}(\mathbb{R}, L_k^{\alpha+2}(\mathbb{R}^n))}\\
       &\leq 2CM^\alpha \left\|u-v\right\|_{L^{\alpha+1}(\mathbb{R}, L_k^{\alpha+2}(\mathbb{R}^n))}.
\end{align*}
Notice that, for    $CM^\alpha<\frac{1}{2}$, $\mathcal{T}: X\to X$ is a constraction mapping. Applying Banach's fixed point theorem, there exists a global small data Sobolev solution $u$ of the equation $ u=\mathcal{T}u$ in $ X$,  which also gives the solution to the equation \eqref{N-K-GEqu}. In addition, by the embedding $ H^{\sigma(\alpha),\kappa}_\frac{\alpha+2}{\alpha+1}(\mathbb{R}^n)\subseteq H^{\frac{\alpha }{\alpha+2},\kappa}(\mathbb{R}^n)$ and $ H^{\sigma(\alpha)-1,\kappa}_\frac{\alpha+2}{\alpha+1}(\mathbb{R}^n)\subseteq H^{-\frac{2}{\alpha+2},\kappa}(\mathbb{R}^n)$, we have
\begin{align*}
    \|u\|_{L^\infty(\mathbb{R}, H^{\frac{\alpha}{\alpha+2}, \kappa}(\mathbb{R}^n))}
    &\leq  \|\frac{d\mathcal{K}_t}{dt}u_0\|_{L^\infty(\mathbb{R}, H^{\frac{\alpha}{\alpha+2}, \kappa}(\mathbb{R}^n))}+ \|\mathcal{K}_tu_1\|_{L^\infty(\mathbb{R}, H^{\frac{\alpha}{\alpha+2}, \kappa}(\mathbb{R}^n))} \\
    &\qquad \qquad+\|\int_0^t\mathcal{K}_{t-\tau}F(u)\,d\tau\|_{L^\infty(\mathbb{R}, H^{\frac{\alpha}{\alpha+2}, \kappa}(\mathbb{R}^n))} \\
    &\leq  \|u_0\|_{H^{\frac{\alpha}{\alpha+2}, \kappa}(\mathbb{R}^n)}+ \|u_1\|_{H^{-\frac{2}{\alpha+2}, \kappa}(\mathbb{R}^n)}+ \|F(u)\|_{L^1(\mathbb{R}, H^{-\frac{2}{\alpha+2}, \kappa}(\mathbb{R}^n))} \\
    &\lesssim \|u_0\|_{H^{\sigma(\alpha),\kappa}_\frac{\alpha+2}{\alpha+1}(\mathbb{R}^n)}+ \|u_1\|_{H^{\sigma(\alpha)-1,\kappa}_\frac{\alpha+2}{\alpha+1}(\mathbb{R}^n)}+ \|F(u)\|_{L^1(\mathbb{R}, H^{\sigma(\alpha)-1,\kappa}_\frac{\alpha+2}{\alpha+1})} \\
    &\lesssim \|u_0\|_{H^{\sigma(\alpha),\kappa}_\frac{\alpha+2}{\alpha+1}(\mathbb{R}^n)}+ \|u_1\|_{H^{\sigma(\alpha)-1,\kappa}_\frac{\alpha+2}{\alpha+1}(\mathbb{R}^n)}+ \||u|^{\alpha+1}\|_{L^1(\mathbb{R}, L_k^\frac{\alpha+2}{\alpha+1}(\mathbb{R}^n))} \\
    &=\|u_0\|_{H^{\sigma(\alpha),\kappa}_\frac{\alpha+2}{\alpha+1}(\mathbb{R}^n)}+ \|u_1\|_{H^{\sigma(\alpha)-1,\kappa}_\frac{\alpha+2}{\alpha+1}(\mathbb{R}^n)}+ \|u\|^\alpha_{L^{\alpha+1}(\mathbb{R}, L_k^{\alpha+2}(\mathbb{R}^n))},
\end{align*}
which indicates that the unique solution also belongs to $L^\infty(\mathbb{R}, H^{\frac{\alpha}{\alpha+2}, \kappa}(\mathbb{R}^n))$.
\end{proof}

Argued similarly as in the nonlinear Klein-Gordon equation \eqref{N-K-GEqu}, we also study the Cauchy problem for the nonlinear beam equation
\begin{equation}\label{N-BeamEqu}
	\begin{cases}
	\partial_t^2u+\Delta_\kappa^2 u+u=F(u),\\
	u|_{t=0}=u_0 ,  \\
	\partial_tu|_{t=0}=u_1.
	\end{cases}
	\end{equation}
By Duhamel's principle, the solution is formally given by
\begin{equation}\label{AA}
	u(t)=\frac{d\mathcal{B}_t}{dt}u_0+\mathcal{B}_tu_1-\int_0^t\mathcal{B}_{t-\tau}F(u)\,d\tau.
	\end{equation}
We also find the critical index
\begin{equation*}
    \alpha_B(N)=\frac{4-N+\sqrt{N^2+24N+16}}{2N}.
\end{equation*}
The following result is about the global well-posedness  for the nonlinear beam equation (\ref{AA}).
\begin{theorem}
     Let $\alpha_B(N)<\alpha<\frac{8}{N}$, $\sigma(\alpha)=\frac{\alpha N}{\alpha+2}-\frac{2}{\alpha+1}$, $\sigma(\alpha)<s\leq 2$, $s_0=s-\frac{\alpha  N}{2(\alpha+2)}$ and $(u_0,u_1)\in H^{s,\kappa}_\frac{\alpha+2}{\alpha+1}(\mathbb{R}^n)\times H^{s-2,\kappa}_\frac{\alpha+2}{\alpha+1}(\mathbb{R}^n)$ with sufficient small norm. Then the equation \eqref{AA} has a unique solution
     \begin{equation*}
         u\in C(\mathbb{R}, H^{s_0, \kappa}(\mathbb{R}^n))\cap L^{\alpha+1}(\mathbb{R},L_\kappa^{\alpha+2}(\mathbb{R}^n)).
     \end{equation*}
\end{theorem}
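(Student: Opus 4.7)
The plan is to mirror, step by step, the Banach fixed-point argument just given for the nonlinear Klein--Gordon equation, with $\mathcal{K}_t$ replaced by $\mathcal{B}_t$ and Proposition \ref{Klein-decay} replaced by Proposition \ref{beam-decay}. Specifically, I will realize the solution as the unique fixed point of
$$\mathcal{T}u(t) = \tfrac{d\mathcal{B}_t}{dt}u_0 + \mathcal{B}_t u_1 - \int_0^t \mathcal{B}_{t-\tau} F(u(\tau))\,d\tau$$
on the closed ball $X=\{u\in L^{\alpha+1}(\mathbb{R}, L^{\alpha+2}_\kappa(\mathbb{R}^n)):\|u\|_X\leq M\}$, where $M$ is a small multiple of $\|u_0\|_{H^{s,\kappa}_{(\alpha+2)/(\alpha+1)}}+\|u_1\|_{H^{s-2,\kappa}_{(\alpha+2)/(\alpha+1)}}$.

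The first step is to apply Proposition \ref{beam-decay} with $p=\alpha+2$, $\delta=\frac{\alpha}{2(\alpha+2)}$, combined with the Besov--Lebesgue inclusion $B^{0,\kappa}_{\alpha+2,2}\hookrightarrow L^{\alpha+2}_\kappa$ of Proposition \ref{properties}(7), to obtain pointwise-in-time bounds of the form
\begin{align*}
\|\tfrac{d\mathcal{B}_t}{dt}u_0\|_{L^{\alpha+2}_\kappa(\mathbb{R}^n)} &\lesssim k(t)\|u_0\|_{H^{s,\kappa}_{(\alpha+2)/(\alpha+1)}(\mathbb{R}^n)},\\
\|\mathcal{B}_t u_1\|_{L^{\alpha+2}_\kappa(\mathbb{R}^n)} &\lesssim k(t)\|u_1\|_{H^{s-2,\kappa}_{(\alpha+2)/(\alpha+1)}(\mathbb{R}^n)}.
\end{align*}
Here $\mathcal{B}_t$ is of order $-2$, so the factor $(I+\Delta_\kappa^2)^{-1/2}$ gains two derivatives and thus exactly matches the drop $u_1 \mapsto \mathcal{B}_t u_1$ from regularity $s-2$ to effective regularity $s$ before the decay kernel is invoked.

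The second and crucial step is to check that $k\in L^{\alpha+1}(\mathbb{R})$. At infinity the rate is $|t|^{-N\delta/2}=|t|^{-\alpha N/(4(\alpha+2))}$, so $L^{\alpha+1}$-integrability near $\infty$ is equivalent to $N\alpha(\alpha+1) > 4(\alpha+2)$, which is exactly $\alpha>\alpha_B(N)$. Near the origin the rate is $|t|^{-\max(s'+N\delta,0)}$ for the effective Besov parameter $s'$ produced by the reduction above; integrability near $0$ translates into the condition $s>\sigma(\alpha)=\frac{\alpha N}{\alpha+2}-\frac{2}{\alpha+1}$, while $s\leq 2$ ensures that the hypothesis $s'\leq 0$ in Proposition \ref{beam-decay} is available. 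Granted $k\in L^{\alpha+1}(\mathbb{R})$, Young's inequality in time followed by the polynomial bound \eqref{F-condition} on $F$ yields
$$\left\|\int_0^t \mathcal{B}_{t-\tau}F(u)(\tau)\,d\tau\right\|_{L^{\alpha+1}(\mathbb{R},L^{\alpha+2}_\kappa(\mathbb{R}^n))} \lesssim \|k\|_{L^{\alpha+1}(\mathbb{R})}\|u\|_X^{\alpha+1},$$
and an identical computation with $F(u)-F(v)$ together with Hölder's inequality produces $\|\mathcal{T}u-\mathcal{T}v\|_X \leq 2CM^\alpha\|u-v\|_X$. Taking $M$ sufficiently small makes $\mathcal{T}$ a contraction on $X$, which furnishes the unique global solution in $X$. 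Finally, the continuous inclusions $H^{s,\kappa}_{(\alpha+2)/(\alpha+1)}\hookrightarrow H^{s_0,\kappa}$ and $H^{s-2,\kappa}_{(\alpha+2)/(\alpha+1)}\hookrightarrow H^{s_0-2,\kappa}$ coming from Proposition \ref{properties}(6) with $s_0=s-\alpha N/(2(\alpha+2))$ upgrade the solution to $C(\mathbb{R},H^{s_0,\kappa}(\mathbb{R}^n))$, in direct analogy with the last paragraph of the preceding Klein--Gordon proof.

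The main obstacle is the bookkeeping of Besov/Sobolev indices in the presence of the second-order smoothing built into $\mathcal{B}_t$: one must carefully verify that the interval $\sigma(\alpha)<s\leq 2$ is precisely the range in which the effective index $s'$ satisfies $s'\leq 0$ \emph{and} $\max(s'+N\delta,0)(\alpha+1)<1$, so that $k\in L^{\alpha+1}(\mathbb{R})$ at both ends. Once this tuning is confirmed, everything else reduces to the routine Young--Hölder--contraction machinery already illustrated in the Klein--Gordon argument.
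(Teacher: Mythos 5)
Your proposal follows essentially the same route as the paper: the same contraction on the ball $X\subset L^{\alpha+1}(\mathbb{R},L_\kappa^{\alpha+2}(\mathbb{R}^n))$, the same use of Proposition \ref{beam-decay} plus the Besov embeddings to get the kernel $k(t)$, the same verification that $\alpha>\alpha_B(N)$ and $s>\sigma(\alpha)$ give $k\in L^{\alpha+1}(\mathbb{R})$ at infinity and at the origin respectively, and the same lift to $H^{s_0,\kappa}$ via Proposition \ref{properties}(6). The only (harmless) discrepancy is your attribution of the hypothesis $s\le 2$: in the paper it is used to pass from $\|F(u)\|_{H^{s-2,\kappa}_{(\alpha+2)/(\alpha+1)}}$ to $\|F(u)\|_{L_\kappa^{(\alpha+2)/(\alpha+1)}}$ in the Duhamel term, rather than only to validate the sign condition in Proposition \ref{beam-decay}.
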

\begin{proof}
 From Proposition \ref{properties} and \ref{beam-decay}, we have
 \begin{align}
     \|\frac{d\mathcal{B}_t}{dt}u_0\|_{L^{\alpha+2}_k(\mathbb{R}^n)}&\leq  \|\frac{d\mathcal{B}_t}{dt} u_0\|_{B^{0,\kappa}_{\alpha+2,2}(\mathbb{R}^n)}\lesssim k(t) \|u_0\|_{B^{s,\kappa}_{\frac{\alpha+2}{\alpha+1},2}(\mathbb{R}^n)}\lesssim k(t) \|u_0\|_{H^{s,\kappa}_\frac{\alpha+2}{\alpha+1}(\mathbb{R}^n)},\nonumber\\ 
      \|\mathcal{B}_t u_1\|_{L^{\alpha+2}_k(\mathbb{R}^n)}&\leq  \|\mathcal{B}_t u_1\|_{B^{0,\kappa}_{\alpha+2,2}(\mathbb{R}^n)}\lesssim k(t) \|u_1\|_{B^{s-2,\kappa}_{\frac{\alpha+2}{\alpha+1},2}(\mathbb{R}^n)}\lesssim k(t) \|u_1\|_{H^{s-2,\kappa}_\frac{\alpha+2}{\alpha+1}(\mathbb{R}^n)},\label{B-decay}
 \end{align}
 where
 \begin{equation*}
 k(t)=\begin{cases}
	|t|^{-\max \left(\frac{\alpha N}{2(\alpha+2)}-\frac{s}{2},0\right)} , \, &|t|\leq1,\\
	|t|^{-\frac{\alpha N}{4(\alpha+2)}}, \,&|t|\geq1.
	\end{cases}
\end{equation*}
For $\alpha_B(N)<\alpha<\frac{8}{N}$ and  $\sigma(\alpha)<s\leq 2$, we see that 
\begin{equation*}
    (\alpha+1)\left(\frac{\alpha N}{2(\alpha+2)}-\frac{s}{2}\right)<1\quad \text{and}\quad (\alpha+1)\frac{\alpha N}{4(\alpha+2)}>1,
\end{equation*}
which implies that  $k\in L^{\alpha+1}(\mathbb{R})$ and 
\begin{align*}
  \|\frac{d\mathcal{B}_t}{dt} u_0\|_{L^{\alpha+1}(\mathbb{R},L^{\alpha+2}_k(\mathbb{R}^n))}&\leq C\|u_0\|_{H^{s,\kappa}_\frac{\alpha+2}{\alpha+1}(\mathbb{R}^n)},\\
 \|\mathcal{B}_t u_1\|_{L^{\alpha+1}(\mathbb{R},L^{\alpha+2}_k(\mathbb{R}^n))}&\leq C\|u_1\|_{H^{s-2,\kappa}_\frac{\alpha+2}{\alpha+1}(\mathbb{R}^n)}.
\end{align*}
Therefore,   from \eqref{B-decay}, Proposition \ref{properties}, and Young's inequality, it follows that
\begin{align*}
 \left\|\int_0^t\mathcal{B}_{t-\tau}F(u)\,d\tau\right\|_{L^{\alpha+1}(\mathbb{R},L^{\alpha+2}_k(\mathbb{R}^n))}&\leq  \left\|\int_0^t\|\mathcal{B}_{t-\tau}F(u)\|_{L^{\alpha+2}_k(\mathbb{R}^n)}\,d\tau\right\|_{L^{\alpha+1}(\mathbb{R})} \\
 &\leq C \left\|\int_0^t k(t-\tau)\|F(u)\|_{H^{s-2,\kappa}_\frac{\alpha+2}{\alpha+1}(\mathbb{R}^n)}\,d\tau\right\|_{L^{\alpha+1}(\mathbb{R})} \\
  &\leq C \left\|\int_0^t k(t-\tau)\|F(u)\|_{L_k^\frac{\alpha+2}{\alpha+1}(\mathbb{R}^n)}\,d\tau\right\|_{L^{\alpha+1}(\mathbb{R})} \\
  &\leq C\|k\|_{L^{\alpha+1}(\mathbb{R})} \|F(u)\|_{L^1(\mathbb{R},L_k^\frac{\alpha+2}{\alpha+1}(\mathbb{R}^n))}\\
  &\leq C\left\||u|^{\alpha+1}\right\|_{L^1(\mathbb{R},L_k^\frac{\alpha+2}{\alpha+1}(\mathbb{R}^n))}\\
  &=C\|u\|^{\alpha+1}_{L^{\alpha+1}(\mathbb{R},L_k^{\alpha+2}(\mathbb{R}^n))}.
\end{align*}
Taking $M=2C(\|u_0\|_{H^{s,\kappa}_\frac{\alpha+2}{\alpha+1}(\mathbb{R}^n)}+\|u_1\|_{H^{s-2,\kappa}_\frac{\alpha+2}{\alpha+1}(\mathbb{R}^n)})$,  we set the evolution spaces
\begin{equation*}
    X=\{u\in L^{\alpha+1}(\mathbb{R},L_k^{\alpha+2}(\mathbb{R}^n)): \|u\|_{L^{\alpha+1}(\mathbb{R},L_k^{\alpha+2}(\mathbb{R}^n))}\leq M\}.
\end{equation*}
Define the mapping
\begin{equation*}
    \mathcal{T}: u\mapsto \frac{d\mathcal{B}_t}{dt}u_0+\mathcal{B}_tu_1-\int_0^t\mathcal{B}_{t-\tau}F(u)\,d\tau, \;\forall u\in X.
\end{equation*}
Then for any $u,v\in X$, we have
\begin{equation*}
    \|\mathcal{T}u\|_{L^{\alpha+1}(\mathbb{R},L_k^{\alpha+2}(\mathbb{R}^n))}\leq \frac{M}{2}+CM^{\alpha+1}.
\end{equation*}
Thus
\begin{align*}
      &\quad\|\mathcal{T}u-\mathcal{T}v\|_{L^{\alpha+1}(\mathbb{R},L_k^{\alpha+2}(\mathbb{R}^n))}\\
     &=\left\|\int_0^t\mathcal{B}_{t-\tau}\left(F(u)-F(v)\right)\,d\tau\right\|_{L^{\alpha+1}(\mathbb{R},L_k^{\alpha+2}(\mathbb{R}^n))}\\
     &\leq C \left\|F(u)-F(v)\right\|_{L^1(\mathbb{R},L_k^\frac{\alpha+2}{\alpha+1}(\mathbb{R}^n))}\\
     &\leq C \left\|(|u|^\alpha+|v|^\alpha)|u-v|\right\|_{L^1(\mathbb{R},L_k^\frac{\alpha+2}{\alpha+1}(\mathbb{R}^n))}\\
    &=C\left\|\left\|(|u|^\alpha+|v|^\alpha)|u-v|\right\|_{L_k^\frac{\alpha+2}{\alpha+1}(\mathbb{R}^n)}\right\|_{L^1(\mathbb{R})}\\
     &\leq C\left\|\left\||u|^\alpha+|v|^\alpha\right\|_{L_k^\frac{\alpha+2}{\alpha}(\mathbb{R}^n)}\left\|u-v\right\|_{L_k^{\alpha+2}(\mathbb{R}^n)}\right\|_{L^1(\mathbb{R})}\\
      &\leq C\left\||u|^\alpha+|v|^\alpha\right\|_{L^\frac{\alpha+1}{\alpha}(\mathbb{R}, L_k^\frac{\alpha+2}{\alpha}(\mathbb{R}^n))}\left\|u-v\right\|_{L^{\alpha+1}(\mathbb{R}, L_k^{\alpha+2}(\mathbb{R}^n))}\\
       &\leq C\left(\left\||u|^\alpha\right\|_{L^\frac{\alpha+1}{\alpha}(\mathbb{R}, L_k^\frac{\alpha+2}{\alpha}(\mathbb{R}^n))}+\left\||v|^\alpha\right\|_{L^\frac{\alpha+1}{\alpha}(\mathbb{R}, L_k^\frac{\alpha+2}{\alpha}(\mathbb{R}^n))}\right)\left\|u-v\right\|_{L^{\alpha+1}(\mathbb{R}, L_k^{\alpha+2}(\mathbb{R}^n))}\\
       &=C\left(\|u\|_{L^{\alpha+1}(\mathbb{R}, L_k^{\alpha+2}(\mathbb{R}^n))}^\alpha+\|v\|_{L^{\alpha+1}(\mathbb{R}, L_k^{\alpha+2}(\mathbb{R}^n))}^\alpha \right)\left\|u-v\right\|_{L^{\alpha+1}(\mathbb{R}, L_k^{\alpha+2}(\mathbb{R}^n))}\\
       &\leq 2CM^\alpha \left\|u-v\right\|_{L^{\alpha+1}(\mathbb{R}, L_k^{\alpha+2}(\mathbb{R}^n))}.
\end{align*}
Notice that, for  $CM^\alpha<\frac{1}{2}$,     $\mathcal{T}: X\to X$ is a constraction mapping. Therefore, applying Banach's fixed point theorem, the equation \eqref{N-BeamEqu} has a unique
solution $u\in X$. In addition, by the embedding $ H^{s,\kappa}_\frac{\alpha+2}{\alpha+1}(\mathbb{R}^n)\subseteq H^{s_0,\kappa}(\mathbb{R}^n)$ and $ H^{s-2,\kappa}_\frac{\alpha+2}{\alpha+1}(\mathbb{R}^n)\subseteq H^{s_0-2,\kappa}(\mathbb{R}^n)$, we have
\begin{align*}
   & \quad\|u\|_{L^\infty(\mathbb{R}, H^{s_0, \kappa}(\mathbb{R}^n))}\\
    &\leq  \|\frac{d\mathcal{B}_t}{dt}u_0\|_{L^\infty(\mathbb{R}, H^{s_0, \kappa}(\mathbb{R}^n))}+ \|\mathcal{B}_tu_1\|_{L^\infty(\mathbb{R}, H^{s_0, \kappa}(\mathbb{R}^n))}+ \|\int_0^t\mathcal{B}_{t-\tau}F(u)\,d\tau\|_{L^\infty(\mathbb{R}, H^{s_0, \kappa}(\mathbb{R}^n))} \\
    &\leq  \|u_0\|_{H^{s_0, \kappa}(\mathbb{R}^n)}+ \|u_1\|_{H^{s_0-2, \kappa}(\mathbb{R}^n)}+ \|F(u)\|_{L^1(\mathbb{R}, H^{s_0-2, \kappa})} \\
    &\lesssim \|u_0\|_{H^{s,\kappa}_\frac{\alpha+2}{\alpha+1}(\mathbb{R}^n)}+ \|u_1\|_{H^{s-2,\kappa}_\frac{\alpha+2}{\alpha+1}(\mathbb{R}^n)}+ \|F(u)\|_{L^1(\mathbb{R}, H^{s-2,\kappa}_\frac{\alpha+2}{\alpha+1}(\mathbb{R}^n))} \\
    &\lesssim \|u_0\|_{H^{s,\kappa}_\frac{\alpha+2}{\alpha+1}(\mathbb{R}^n)}+ \|u_1\|_{H^{s-2,\kappa}_\frac{\alpha+2}{\alpha+1}(\mathbb{R}^n)}+ \||u|^{\alpha+1}\|_{L^1(\mathbb{R}, L_k^\frac{\alpha+2}{\alpha+1}(\mathbb{R}^n))} \\
    &=\|u_0\|_{H^{s,\kappa}_\frac{\alpha+2}{\alpha+1}(\mathbb{R}^n)}+ \|u_1\|_{H^{s-2,\kappa}_\frac{\alpha+2}{\alpha+1}(\mathbb{R}^n)}+ \|u\|^\alpha_{L^{\alpha+1}(\mathbb{R}, L_k^{\alpha+2}(\mathbb{R}^n))},
\end{align*}
which indicates that the unique solution also belongs to $L^\infty(\mathbb{R}, H^{s_0, \kappa}(\mathbb{R}^n))$.
\end{proof}

\section*{Statements and Declarations} The authors confirm that the data supporting the findings of this study are available within the article and its supplementary materials.

  \textbf{Competing Interests:} No potential competing of interest was reported by the author. 
\section*{Acknowledgments} C. Luo and M. Song are supported by the National Natural Science Foundation of China (Grant No. 11701452) and Guangdong Basic and Applied Basic Research Foundation (No. 2023A1515010656). S.S. Mondal is supported by the DST-INSPIRE Faculty Fellowship (Grant No. DST/INSPIRE/04/2023/002038).

\end{document}